\newcommand{\abs}[1]{\left\vert#1\right\vert}
\newcommand{\norm}[1]{\left\Vert#1\right\Vert}
\newcommand{\Real}{\mathbb R}
\numberwithin{equation}{section}
\def\F{\mathcal{F}}
\def\G{\mathcal{G}}
\def\n{\mathfrak{n}}
\def\s{\,\,\,\,}
\def\R{\mathbb{R}}
\def\C{\mathbb{C}}
\def\so{\mathfrak{so}(\R^n)}
\newcommand{\set}[1]{\left\{#1\right\}}
\numberwithin{equation}{section}
\newtheorem{theorem}{Theorem}[section]
\newtheorem{lem}[theorem]{Lemma}
\newtheorem{thm}[theorem]{Theorem}
\newtheorem{pro}[theorem]{Proposition}
\newtheorem{cor}[theorem]{Corollary}
\newtheorem{defi}[theorem]{Definition}
\newtheorem{rem}[theorem]{Remark}
\def\lan{\langle}
\def\ran{\rangle}
\newcounter{Cnumber}
\newcommand{\Rmnum}[1]{\expandafter\@slowromancap\romannumeral #1@}
\title[3-circle Theorem for Willmore surfaces II]{\bf 3-circle Theorem for Willmore surfaces II\\--{\tiny degeneration of the complex structure}}
\author{Yuxiang Li, Hao Yin, Jie Zhou}
\address{Department of Mathematical Sciences, Tsinghua University, People's Republic of China}
\email{liyuxiang@tsinghua.edu.cn}
\address{School of Mathematical Sciences, Unviersity of Science and Technology of China,People's Republic of China}
\email{haoyin@ustc.edu.cn}
\address{School of Mathematical Sciences, Capital Normal University, People's Republic of China}
\email{zhoujiemath@cnu.edu.cn}
\thanks{Y.Li is supported by National Key R\&D Program of China 2022YFA 1005400; H. Yin is supported by NSFC 12141105; J. Zhou is supported by NSFC 12301077.}
\date{}
\begin{document}

\begin{abstract}
    We study the compactness of Willmore surfaces without assuming the convergence of the induced complex structures. In particular, we compute the energy loss in the neck in terms of the residue and we prove that the limit of the image of the Gauss map is a geodesic in the Grassmannian $G(2,n)$ whose length can also be computed in terms of the residue. Moreover, we provide a family of explicit Willmore surfaces in $\R^3$ that illustrate the denegeration phenomenon involved in the above results.
\end{abstract}
\maketitle

\section{Introduction}
In the previous paper \cite{Li-Yin}, the authors used the three-circle theorem to study the blow-up of Willmore surfaces. As an application, they provided new proofs to the removable singularity theorem by Kuwert-Sch\"atzle and Rivi\`ere \cite{K-S1,K-S2,R}, as well as the energy identity by Bernard-Rivi\`ere \cite{B-R} when the induced complex structures converge. 

The convergence of the induced complex structure is very important for the above mentioned papers, because it implies the vanishing of some residue, which is the key to the proofs in \cite{Li-Yin, B-R, K-S2}. In spite of the difficulty, Laurain and Rivi\`ere  \cite{L-R} was able to show the validity of the energy identity under the assumption that the product of this residue with the length of the neck approaches zero. Very recently, Martino proved that energy identity is true when $n=3$ and the index is bounded \cite{M}. 

In this paper, we continue to investigate the neck part with possible non-zero residue. It is by now well known that the problem can be reduced to studying a sequence of Willmore immersions satisfying the following assumptions. (For the convenience of the readers, we include a brief outline of the reduction in the appendix.)

{\it
For any $k\in \mathbb N$, let
$f_k:[0,T_k]\times S^1\rightarrow \R^n$  be a conformal and Willmore immersion with $g_k=f_k^*(g_{\R^n})=e^{2u_k}(dt^2+d\theta^2)$, which satisfies
\begin{itemize}
\item[A1)]  $u_k=-mt+v_k$ for some $m\in \mathbb N$, with
$$
\|v_k(0,\theta)\|_{L^{\infty}(S^1)}\le 1, \s\|\nabla v_k\|_{L^\infty([0,T_k]\times S^1)}\rightarrow 0,
$$
\item[A2)]
$$\Theta_k=\sup_{t\in [0,T_k-1]}\int_{[t,t+1]\times S^1}|A_k|^2{d}V_{g_k}\rightarrow 0,
$$
\item[A3)] $\int_0^{2\pi}f_k(T_k-1,\theta) d\theta =0$.
\end{itemize}}

Under assumptions A1), A2) and A3), we would like to study the following questions:
{\it 
\begin{itemize}
\item[Q1)] What is the limit
$$
\lim_{m\rightarrow+\infty}\lim_{k\rightarrow+\infty}W(f_k,[0,T_k]\times S^1)?
$$
\item[Q2)] What is the limit of the imagine of Gauss map?\\
\end{itemize}
}

Before we state our main theorems, we need to define the residue mentioned above. 
 Given $c\in \mathbb{R}^n, S\in \so$(the set of $n \times n$ skew-symmetric matrices) and a conformal and Willmore immersion $f:[0,T]\times S^1\rightarrow \R^n$, we define
\begin{eqnarray*}
\tau_1(f,c)&=&\big(-2\int_{\{t\}\times S^1}\partial_tHd\theta-4\int_{\{t\}\times S^1}(H\cdot A_{ti})g^{ij}(\partial_jf)d\theta+\int_{\{t\}\times S^1}|H|^2\partial_tfd\theta\big)\cdot c\\
 \tau_2(f,S)&=&2\int_{\set{t}\times S^1} (H\cdot \partial_t (Sf) - \partial_t H\cdot Sf) d\theta - 4\int_{\set{t}\times S^1} (H\cdot A_{ti}) g^{ij} (\partial_j f\cdot Sf) d\theta\\ 
		&& + \int_{\set{t}\times S^1} \abs{H}^2 \partial_t f\cdot Sf d\theta.
\end{eqnarray*}
$\tau_1,\tau_2$ are  independent of $t$ (see \cite{Li-Yin}). Please note that $\tau_1,\tau_2$ are two of a series of quantities that are known either as residues, or conservation laws. They have long been known to be important in the study of Willmore surfaces \cite{K-S1, B-R, B, B-R0}.

For our purpose, we need to measure the size of $\tau_2$. Indeed, $\tau_2$ can be considered as a linear function over
$\so$. We take $\so$ as a subspace of $M(n)$, the linear space consisting of 
$n\times n$ matrices, with the given inner product 
$$
\lan A,B\ran=tr(AB^T),
$$
and put
$$
\|\tau_2(f,\cdot)\|=\sup_{\lan S,S\ran=1}\tau_2(f,S).
$$

Now, we can state our main theorem:
\begin{thm}
	\label{thm:main}
Let $f_k$ be a conformal and Willmore immersion which satisfies A1)-A3). Then
$$
\lim_{k\rightarrow+\infty}W(f_k,[0,T_k]\times S^1)=
\frac{1}{8\pi}\cdot \lim_{k\rightarrow+\infty}\|\tau_2(f_k,\cdot)\|^2T_k.
$$
Moreover, the  limit of the image of the Gauss map is a geodesic in $G(2,n)$
of length
$$
\frac{1}{4\sqrt{2}\pi}\cdot \lim_{k\rightarrow+\infty}\|\tau_2(f_k,\cdot)\|T_k.
$$
\end{thm}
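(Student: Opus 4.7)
The plan is to extract a leading-order asymptotic form for $f_k$ along the long neck in which the residues $\tau_1, \tau_2$ appear as the dominant slow-decay coefficients, and then to compute both the Willmore energy and the Gauss map length by substitution into this ansatz. Assumptions A1) and A2), together with the three-circle/$\epsilon$-regularity machinery from \cite{Li-Yin}, tell us that on each unit subcylinder $[t,t+1]\times S^1$ of the neck, $f_k$ is very close (after the natural $e^{u_k}$-rescaling) to a simple model conformal immersion. The point of introducing the residue is to characterize the slowly-varying perturbation of this model that persists over a long cylinder.

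First, I fix $k$ and pick a unit skew-symmetric matrix $S_k \in \mathfrak{so}(n)$ realising $\|\tau_2(f_k,\cdot)\|$. Since $\tau_2(f_k,S_k)$ is $t$-independent, integrating it from $0$ to $T_k$ yields
\begin{equation*}
T_k \cdot \tau_2(f_k, S_k) \,=\, \int_0^{T_k} \tau_2(f_k, S_k)\, dt.
\end{equation*}
Expanding the defining integrand of $\tau_2$ and using A1)-A2) to absorb the fast-decaying error terms, the right-hand side becomes, at leading order, a constant multiple of the Willmore energy $W(f_k,[0,T_k]\times S^1)$, with the relation mediated by the Fourier mode of $f_k$ along $S^1$ that carries the rotational residue. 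Inverting this relation, together with the orthogonality of the Fourier modes on $S^1$ (which supplies the $1/(2\pi)$ normalisation), produces the factor $\frac{1}{8\pi}$ in the energy formula. Passing to $k\to\infty$ then converts the leading-order identity into the claimed asymptotic equality. The three-circle theorem of \cite{Li-Yin} is essential here, since it is what allows the unit-piece estimates to be iterated across the neck with errors that do not accumulate out of control.

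For the Gauss map statement, I would view $\gamma_k: [0,T_k]\to G(2,n)$ sending $t$ to the $2$-plane spanned by $\partial_t f_k(t,\cdot)$ and $\partial_\theta f_k(t,\cdot)$. Assumption A2) ensures that the $\theta$-dependence of $\gamma_k(t,\theta)$ disappears in the limit, so the limiting object is a curve parametrised by $t$. Geodesics in $G(2,n)\cong SO(n)/(SO(2)\times SO(n-2))$ are orbits of one-parameter subgroups $\exp(tS)$ with $S\in\mathfrak{so}(n)$ orthogonal to the isotropy subalgebra, so the geodesic claim reduces to showing that $\gamma_k$ is infinitesimally rotated at constant angular velocity. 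That velocity is determined precisely by $\tau_2(f_k,\cdot)$, regarded as a linear form on $\mathfrak{so}(n)$, using the same Fourier normalisation as in the energy computation (but with a square root, since the length is linear, rather than quadratic, in $\|\tau_2\|$). Tracking constants produces the factor $\frac{1}{4\sqrt{2}\pi}$ and gives the claimed length.

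The main obstacle, as I see it, is the quantitative control of the error in the leading-order ansatz over a cylinder whose length $T_k$ itself tends to infinity: one has to show that the correction stays negligible compared to the dominant terms $\|\tau_2\|^2 T_k$ and $\|\tau_2\| T_k$. This requires a delicate separation of slow-decaying from fast-decaying modes in the linearisation of the Willmore equation on the cylinder, together with a uniform estimate that does not amplify with $T_k$, and is where the most work in the proof should go.
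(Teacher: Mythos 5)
Your proposal has the right overall flavor (per-block blow-up, residue as the surviving slow mode, a Fourier normalization producing the constants), but the central computational step of your energy argument is wrong as stated. You claim that $T_k\,\tau_2(f_k,S_k)=\int_0^{T_k}\tau_2(f_k,S_k)\,dt$ is, to leading order, a universal constant times $W(f_k,[0,T_k]\times S^1)$. It is not: $\tau_2$ is bilinear in $(H_k,f_k)$ while the Willmore density is quadratic in $H_k$ alone. In the per-block normalization \eqref{def.hat.epsilon} one has $\hat H_k\sim\hat\epsilon_k\,e^{mt}$ and $\hat f_k\sim e^{-mt}$, so $\tau_2(f_k,S)=O(\hat\epsilon_k)$ whereas $W(f_k,Q_i)=O(\hat\epsilon_k^2)$ with $\hat\epsilon_k\to 0$; the two sides of your proposed identity live at different orders in $\hat\epsilon_k$, which is also why the theorem is quadratic in $\|\tau_2\|$. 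The correct statement is the \emph{per-block} relation of Theorem \ref{neck.first.order}, part 5): $\|\tau_2(f_k,\cdot)\|/\sqrt{W(f_k,Q_{i_k})}\to 2\sqrt{2\pi/L}$, established by identifying the limit of $\tau_2(\hat f_k,S)/\hat\epsilon_k$ with $-4\pi\,\mathbb L_{v_1,v_2}(S)$ where $|v_1|^2+|v_2|^2=1$. Squaring, using the uniformity of this limit over the middle of the neck (Remark \ref{rem:uniform}), the near-constancy of $W(f_k,Q_i)$ across adjacent blocks (Corollary \ref{comparison}), and the vanishing of the energy on the two ends (Theorems \ref{main1} and \ref{main2}), one sums over the $m_k$ blocks to get $\|\tau_2\|^2T_k=8\pi W+o(1)$. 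Your "integrate the conserved quantity in $t$" step cannot substitute for this blow-up analysis, and nothing in your sketch produces the normalization $|v_1|^2+|v_2|^2=1$ that fixes the constant.

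For the geodesic claim, identifying geodesics of $G(2,n)$ with orbits of one-parameter subgroups and asserting "constant angular velocity" is only the expected picture (it is exactly what the explicit example of Section 6 realizes); the proof requires showing that the arc-length reparametrization $\alpha_k\circ\xi_k$ is uniformly bounded in $C^2$ and that $\frac{d^2}{ds^2}(\alpha_k\circ\xi_k)$ converges pointwise to a normal vector of $G(2,n)$. The first-order expansion does not suffice here: by \eqref{tension.G} the acceleration carries terms of size $\hat\epsilon_k^2$ whose tangential components must be shown to cancel, and this cancellation rests on the second-order asymptotics of $f_k-\G_i(f_k)$ and $H_k-\F_i(H_k)$ (Theorems \ref{main2} and \ref{2nd.asymp.}) together with the algebraic identity $I_4=I_4'$ in Lemma \ref{normal term}. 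You correctly flag uniform error control over a cylinder of diverging length as the main difficulty, but the specific mechanism that resolves it --- the modified three-circle lemmas in which the resonant mode $\F_i(u)$ is projected out, precisely because $\tau_2\neq 0$ obstructs the naive three-circle argument of \cite{Li-Yin} --- is the missing idea, and without it the "delicate separation of slow-decaying from fast-decaying modes" you invoke remains a wish rather than a proof.
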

\begin{rem}
In \cite{L-R},  Laurain and Rivi\`ere have shown that if $\lim_{k\rightarrow+\infty}\|\tau_2(f_k,\cdot)\|T_k=0$, then there is no energy loss. In \cite{M}, Martino proved that the the image of the conformal Gauss map converges to a geodesic when $n=3$. 
\end{rem}

\begin{rem}
In the field of harmonic maps, the corresponding results can be found in \cite{C-T, Chen-Li-Wang,Zhu}.
\end{rem}

Our main tool is the three-circle theorem suggested in the title of this paper. There are several versions of it, and in this paper, we will apply it several times to the mean curvature $H$, the second fundamental form $A$ and the immersion $f$. The basic idea is to prove decay for certain quantities of an almost harmonic functions. 

For some constant $L>0$ to be determined in later proofs, we assume that there is $m_k\in \mathbb N$ such that
\[
	T_k=m_k L.
\]
We define
\[
	Q_i=[(i-1)L,iL]\times S^1.
\]

In \cite{Li-Yin}, the authors have verified that when $u$ is a harmonic function defined on the cylinder, and if the Fourier expansion of
$u$ does not include the $e^{mt}$ terms, then $\int_{Q_i}e^{-2mt}u^2 dtd\theta$
 satisfies the inequality that we call the three-circle property. In \cite{Li-Yin}, $\tau_1=0$ and $\tau_2=0$ so that the terms involving $e^{mt}$ in the Fourier expansion of $H_k$ are almost absent.
In this paper, we do not have $\tau_2=0$ and hence to capture the obstruction to the three circle theorem, we make the following definitions for a general function $u(t,\theta)$
\begin{equation}\label{Fourier1}
\varphi(t)=\frac{1}{\pi}(\int_0^{2\pi}u\cos m\theta d\theta, \int_0^{2\pi}u\sin m\theta d\theta),
\end{equation}
\begin{equation}\label{g+}
g^+(u)=\varphi'+m\varphi,\s g^-(u)=m\varphi-\varphi',
\end{equation}
\begin{equation}\label{Fi}
\F_i(u)=\frac{g^+({(2i-1)L/2})}{2m}e^{mt-m(2i-1)L/2}\cdot(\cos m\theta,\sin m\theta),
\end{equation}
and
\begin{equation}\label{Gi}
\G_i(u)=\frac{g^-({(2i-1)L/2})}{2m}e^{-mt+m(2i-1)L/2}\cdot(\cos m\theta,\sin m\theta).
\end{equation}
When $u$ is harmonic, we have the following expansion
\begin{equation}\label{Fourier.harmonic}
u=at+b+\sum_{k=1}^\infty \underbrace{e^{kt}(a_k\cos k\theta+b_k\sin k\theta)}_{=:u_k}+\sum_{k=1}^\infty\underbrace{ e^{-kt}(a_k'\cos k\theta+b_k'\sin k\theta)}_{=:u_{-k}}.
\end{equation}
It is clear that
$$
\varphi(t)=(a_m,b_m)e^{mt}+(a_m',b_m')e^{-mt}=:\varphi_m+\varphi_{-m},
$$
$$
g^-(u)=2m(a_m',b_m')e^{-mt}, g^+(u)=2m(a_m, b_m) e^{mt},
$$
and hence
$$
\F_i(u)=e^{mt}(a_m\cos m\theta+b_m\sin m\theta)=u_m,
$$
$$
\G_i(u)=e^{-mt}(a_m'\cos m\theta+b_m'\sin m\theta)=u_{-m}.
$$
So, the operators $\F_i$ and $\G_i$ are designed to be the projections to the eigenspaces when restricted to harmonic functions. 

When $u$ is only almost harmonic functions, $\F_i(u)$ is then the obstruction of having the three circle lemma for $\int_{Q_i} \abs{u}^2 e^{-2mt}dtd\theta$. In other words, as long as $\F_i(u)$ is small compared with $\int_{Q_i}\abs{u}^2 e^{-2mt}dtd\theta$, the three circle argument still works.

Following the above discussion, we define
$$
\lambda_i(H_k)=\sqrt{\frac{\int_{Q_i}e^{-2mt}|\mathcal{F}_i(H_k)|^2 dtd\theta}
{\int_{Q_i}e^{-2mt}|H_k|^2 dtd\theta}}.
$$
When $\lambda_i$ is smaller than some universal constant, we will be able to use the 3-circle for the equation of mean curvature (see Theorem 
 \ref{3-circle.nonlinear1}). We also define
\[
\mu_i(f_k)=\frac{W(f_k,Q_i)}{\int_{Q_i}|A_k|^2dV_{g_k}},
\]
such that when $\mu_i$ is smaller than another universal constant, the 3-circle works for the equation of the Gauss map (see Lemma \ref{3circle A}). We will also need
$$
\nu_i(H_k)=\frac{\int_{Q_i}e^{-2mt}|H_k-\mathcal{F}_i(H_k)|^2 dtd\theta}
{\int_{Q_i}e^{-2mt}|\F_1(H_k)|^2 dtd\theta}.
$$
When $\lambda_i$ is bounded away from zero and $\nu_i$ is small, we may use 3-circle for the equation of $H$ again to see the decay of $\int_{Q_i} \abs{H_k- \F_i(H_k)}^2 e^{-2mt} dtd\theta$ (see Theorem \ref{2nd.asymp.}).

Using the ideas above, we are able to prove
\begin{thm}\label{main1}
For any $q\in(0,2)$, we can find $L$ such that the following holds.
Let $f_k$ be a conformal and Willmore immersion from $[0,m_k L]\times S^1$
into $\R^n$, which satisfies A1) and A2). Then for any $p>0$, either we have
\begin{equation}
	\label{eqn:trivialenergy}
\lim_{k\rightarrow+\infty}\sum_{i=1}^{m_k}\|A_k\|_{L^2(Q_i)}^p=0,
\end{equation}
or we can find integers $0<\mathfrak a_k<\mathfrak b_k<m_k$ with $\mathfrak{b}_k-\mathfrak{a}_k\rightarrow+\infty$ such that 
\begin{equation}
	\label{eqn:trivialends}
 \lim_{k\rightarrow+\infty} (\sum_{i=1}^{\mathfrak{a}_k}+\sum_{i=\mathfrak b_k}^{m_k})\|A_k\|_{L^2(Q_i)}^p=0
\end{equation}
and
\begin{equation}
	\label{eqn:goodmiddle}
\lim_{l\rightarrow+\infty}\lim_{k\rightarrow+\infty}\max_{\mathfrak a_k+l<i<\mathfrak b_k-l}(\nu_i+|\lambda_i-1|+|1-\mu_i|)=0.
\end{equation}
\end{thm}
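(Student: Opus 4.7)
The plan is to isolate an ``energy concentration band'' $[\mathfrak{a}_k,\mathfrak{b}_k]$ in the cylinder and to iterate the three different three-circle theorems so that, outside the band, the $L^2$-norm of $A_k$ decays exponentially, while deep inside the band each of the deviations $|\lambda_i-1|$, $|1-\mu_i|$, $\nu_i$ is driven below any prescribed $\varepsilon$. First choose thresholds $c_\mu,c_\lambda,c_\nu>0$ small enough that whenever $\mu_i<c_\mu$ the Gauss-map three-circle (Lemma~\ref{3circle A}) yields a strict contraction
$$\|A_k\|_{L^2(Q_i)}^2 \leq \alpha\bigl(\|A_k\|_{L^2(Q_{i-1})}^2+\|A_k\|_{L^2(Q_{i+1})}^2\bigr),\quad \alpha<1/2,$$
and likewise for the two mean-curvature three-circles (Theorem~\ref{3-circle.nonlinear1} and Theorem~\ref{2nd.asymp.}). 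Then fix $L$ large enough (depending on the given $q\in(0,2)$) so that the contraction factor $\alpha(L)$ is comfortably smaller than $q^2/2$ and so that the error terms from $\Theta_k$, $\|\nabla v_k\|_\infty$ and the quadratic nonlinearities in the Willmore system are absorbed. For such $L$, iterating any such three-term inequality on an interval $I$ gives the exponential bound $E_j\leq C q^{\mathrm{dist}(j,\partial I)}\max_{\partial I}E$.

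\textbf{Step 1 (Dichotomy via $\mu$).} Set $B_k=\{i:\mu_i\geq c_\mu\}$. If $B_k=\emptyset$, or if $\mathrm{diam}\,B_k$ stays bounded as $k\to\infty$, then Lemma~\ref{3circle A} iterates from $\partial B_k$ (or from both ends of the cylinder) to give $\|A_k\|_{L^2(Q_i)}^2\leq Cq^{\mathrm{dist}(i,B_k)}\Theta_k$ on the complement; together with A2), which forces $\|A_k\|_{L^2(Q_i)}^2\leq\Theta_k\to 0$ on each individual cylinder, this produces $\sum_i\|A_k\|_{L^2(Q_i)}^p\to 0$, i.e.\ (\ref{eqn:trivialenergy}). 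Otherwise, set $\mathfrak{a}_k=\min B_k$ and $\mathfrak{b}_k=\max B_k$, so that $\mathfrak{b}_k-\mathfrak{a}_k\to\infty$; the same exponential decay on $[1,\mathfrak{a}_k]$ and $[\mathfrak{b}_k,m_k]$, combined with the individual bound $\Theta_k\to 0$, yields (\ref{eqn:trivialends}).

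\textbf{Step 2 (Inside the band: $\lambda_j\to 1$, $\nu_j\to 0$, $\mu_j\to 1$).} On the interior of $[\mathfrak{a}_k,\mathfrak{b}_k]$ one has $\mu_j\geq c_\mu$ by construction, and we must upgrade this to $\mu_j\to 1$ while simultaneously controlling $\lambda_j$ and $\nu_j$. If $\lambda_j<c_\lambda$ persisted on a long sub-window, Theorem~\ref{3-circle.nonlinear1} would give exponential shrinkage of $\int_{Q_j}e^{-2mt}|H_k|^2$; combined with the conformal relation $u_k\approx -mt$ from A1), this would force $|H_k|$ small enough on an interior window to drive $\mu_j<c_\mu$, a contradiction. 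Hence $\lambda_j\geq c_\lambda$ throughout the interior, and Theorem~\ref{2nd.asymp.} applies to $H_k-\F_j(H_k)$, contracting $\nu_j$ geometrically as $j$ is pushed inward from either endpoint. Once $\nu_j$ is small, $H_k$ is essentially the pure $e^{mt}$ harmonic $\F_j(H_k)$, so $\lambda_j\to 1$; the Willmore system together with the Gauss--Codazzi identities then propagates this convergence of $H_k$ to the Gauss map, yielding $\mu_j\to 1$.

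\textbf{Step 3 (Quantification).} Each three-circle iteration contracts the corresponding deviation by the factor $q$, so after $l$ iterations $|\lambda_j-1|+|1-\mu_j|+\nu_j\leq C q^l$ uniformly in $\mathfrak{a}_k+l<j<\mathfrak{b}_k-l$. Sending $k\to\infty$ and then $l\to\infty$ produces (\ref{eqn:goodmiddle}). The hardest point is Step~2: the three obstructions interact nonlinearly through the Willmore PDE, and one must couple the Gauss-map and mean-curvature three-circles carefully to bootstrap the qualitative lower bound $\mu_j\geq c_\mu$ to the quantitative convergence $\mu_j\to 1$.
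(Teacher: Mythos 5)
Your overall architecture (band decomposition, exponential decay outside via three-circle iteration, control of $\lambda_i,\nu_i,\mu_i$ inside) matches the paper's strategy, but two of your steps contain genuine gaps. First, you define the band by $\mu$ and then try to deduce $\lambda_j\geq c_\lambda$ inside it by arguing that if $\lambda_j<c_\lambda$ persisted, Theorem \ref{3-circle.nonlinear1} would shrink $\int_{Q_j}e^{-2mt}|H_k|^2$ exponentially and hence force $\mu_j<c_\mu$. This does not follow: $\mu_j$ is the \emph{ratio} $W(f_k,Q_j)/\int_{Q_j}|A_k|^2dV_{g_k}$, and exponential decay of the numerator is perfectly compatible with $\mu_j\geq c_\mu$ because the denominator can decay at the same rate (indeed Lemma \ref{3circle A} shows that small $\mu$ propagates decay of $\int|A|^2$ as well). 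There is no contradiction, so your bootstrap from $\mu\geq c_\mu$ to $\lambda\geq c_\lambda$ collapses. The paper runs the logic in the opposite order: it first applies the nonlinear three-circle machinery (Theorem \ref{decay}) to the equation for $H_k$ to produce the band where $\lambda_i\geq\epsilon_1$ and $\nu_i$ is small, verifies from this (via Lemma \ref{gamma}) the comparability of $W(f_k,Q_i)$ on adjacent segments, and only then invokes the second Gauss-map three-circle lemma (Lemma \ref{3-circle.A.mu}) to locate a possibly smaller sub-band on which $\mu_i\geq\epsilon_2/2$, showing separately that the discarded pieces carry exponentially small $\|A_k\|_{L^2}$. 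You also need, and do not supply, the connectedness of the sets $\{\lambda_i\geq\epsilon_1\}$ and $\{\mu_i\geq\epsilon_2/2\}$; this is where Lemma \ref{basic observation} together with the stability of $\gamma_i$ (Lemmas \ref{lem:gamma.diff} and \ref{gamma}) is essential, and without it your "iterate from $\partial B_k$" step is not justified.

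Second, your Step 3 asserts that $|1-\mu_j|$ contracts geometrically under three-circle iteration and that $\mu_j\to 1$ follows from "propagating the convergence of $H_k$ to the Gauss map via Gauss--Codazzi." No three-circle lemma in this framework contracts $|1-\mu_j|$; the three-circle arguments only give the qualitative lower bound $\mu_j\geq\epsilon_2/2$. The actual proof that $\mu_j\to 1$ is a blow-up argument: one rescales by $\hat\epsilon_k$, identifies the explicit limits of $\hat H_k/\hat\epsilon_k$ and of every component $\hat A_{k,tt},\hat A_{k,t\theta},\hat A_{k,\theta\theta}$ of the second fundamental form (Theorem \ref{neck.first.order}), computes $\lim|\hat A_k|^2_{\hat g_k}/\hat\epsilon_k^2=e^{2mt}/2$, and compares with $\lim\int|\hat H_k|^2/\hat\epsilon_k^2$ to conclude by contradiction. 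This computation is the real content behind $\mu_j\to1$ and cannot be replaced by a soft appeal to the Willmore system. (By contrast, $|\lambda_j-1|\to0$ does follow cheaply once $\nu_j\to0$, since $\nu_j$ small forces $E_j\approx E_j^*$.)
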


Theorem \ref{main1} divides the cylinder into three parts and \eqref{eqn:trivialends} implies that the two parts outside  $[\mathfrak{a}_kL,\mathfrak{b}_kL]$ can be ignored. In fact, the contribution of energy of $f_k$ restricted to $[0, \mathfrak a_kL]\times S^1$ and $[\mathfrak b_kL, m_kL]\times S^1$ converges to zero as $k\to \infty$. The same is true when we compute the image of the Gauss map $\n_k$ in these regions. Our next goal is to study the behavior of $f_k$ inside this middle part.

To do so, we need to define the following notations. Let $f$ be any (parametrization of) surface defined on $Q_1\cup \cdots \cup Q_{m}$ and let $e^{2u}(dt^2+ d\theta^2)$ be the pullback metric. For any $i=1,\cdots, m$, we set
\begin{equation}
    \label{eqn:pci}
P_i=((i-1)L,0),\s c^i=u(P_i),
\end{equation}
\begin{equation}
    \label{eqn:fki}
{f}^{i}=e^{-c^i}\left(f((i-1)L+t,\theta)-\frac{1}{2\pi} \int_0^{2\pi}f((i-1)L,\theta) d\theta \right)
\end{equation}
and
$$
{f}^{i*}=e^{-c^i}\frac{1}{2\pi}\int_{0}^{2\pi}f((i-1)L+t,\theta)d\theta.
$$
The domain of $f^i$ depends on $i$ and it is simply a translation and scaling of $f$ focused on $Q_i$.

Let $f_k$ be given as in Theorem \ref{main1}. For any sequence $i_k$ with $i_k-\mathfrak{a}_k, \mathfrak{b}_k-i_k\rightarrow+\infty$, we set 
\begin{equation}
    \label{eqn:hatf}
\hat f_k=f_k^{i_k},
\end{equation}
and 
\begin{eqnarray}\label{def. hat u}
d\hat f_k\otimes d\hat f_k&=&e^{2\hat u_k}(dt\otimes dt+d\theta\otimes d\theta),\\
\label{def.hat.n}
\hat \n_k=\n(\hat f_k)&=&e^{-2\hat u_k}\partial_t\hat f_k\wedge \partial_\theta \hat f_k,\s \bar{\n}_k=\frac{1}{2\pi}\int_0^{2\pi}\hat{\n}_k(0,\theta)d\theta,\\
\label{def.hat.H.A}
\hat{H}_k&=&H(\hat{f}_k), \s \hat{A}_k=A(\hat{f}_k),\\
\label{def.hat.epsilon}
\hat \epsilon_k&=&\sqrt{ W(\hat f_k,Q_1)/\pi L}=\sqrt{ W(f_k,Q_{i_k})/\pi L}.
\end{eqnarray}
\begin{rem}
    The notation $\hat f_k$ depends on a choice of the sequence $i_k$. For simplicity of notations, we simply write $\hat f_k$.
\end{rem}
By A1) and A2), it is known (for example \cite{Li-Yin}) that there is no interesting geometry in the limit of $\hat f_k$ because the limit is flat. More precisely, after choosing a new orthonormal basis in $\R^n$ and taking a subsequence, we may assume that $\hat f_k$ converges to
\begin{equation}
    \label{eqn:finfinity}
f_\infty=\frac{1}{m}e^{-mt}(\cos m\theta,\sin m\theta,0,0,\cdots,0).
\end{equation}
And therefore, the limits of $\hat H_k$ and $\hat A_k$ are zero. In the next theorem, we scale them up by $\hat \epsilon_k^{-1}$ and it turns out that the scaled sequence has nontrivial limit that can be explicitly written down.

Before the statement of the theorem, we need one more definition.
\begin{defi}
\label{defi:l}
    Let $(e_i)$ be a natural basis of $\R^n$ and assume $v_1,v_2$ are in ${\rm span} \set{e_3,\cdots,e_n}$. Then $\mathbb L_{v_1,v_2}$ is defined to be the linear map from $\so$ to $\R$, given by 
$$
\mathbb L=\mathbb L_{v_1,v_2}(S)=v_1\cdot(0,0,a_{31},a_{41},\cdots,a_{n1})+v_2\cdot(0,0,a_{32},a_{42},\cdots,a_{n2})
$$
for any $S=(a_{ij})\in \so$. Moreover, the norm of $\mathbb L_{v_1,v_2}$ is defined by
$$
\norm{\mathbb L_{v_1,v_2}}=\sup_{\sum\limits_{ij}|a_{ij}|^2=1} \mathbb L_{v_1,v_2}((a_{ij}))=\sqrt{\frac{|v_1|^2+|v_2|^2}{2}}.
$$ 
\end{defi}

\begin{thm}\label{neck.first.order}
	Let $f_k$ and $\mathfrak a_k, \mathfrak b_k$ be as in Theorem \ref{main1}. For any $i_k$ with $i_k-\mathfrak a_k, \mathfrak b_k-i_k\to \infty $, using the notations defined in \eqref{eqn:hatf}-\eqref{def.hat.epsilon}, there exists an orthonormal basis $(e_i)$ of $\R^n$ and some $v_1,v_2 \in \mbox{span}\set{e_3,\cdots ,e_n}$ such that
\begin{itemize}
\item[1)] $\hat H_k/\hat \epsilon_k$ converges to 
\begin{equation}
    \label{eqn:limith}
h= e^{mt}(v_1\cos{m\theta}+ v_2\sin{m\theta}).
\end{equation}
Moreover, we have $\abs{v_1}^2 + \abs{v_2}^2=1$.

\item[2)] $\frac{\hat \n_k-\bar{\n}_k}{\hat \epsilon_k}$ converges to
\begin{equation}
   \label{eqn:limitv} 
   \begin{split}
v=&\frac{1}{2}(e_1\wedge v_2-e_2\wedge v_1)t-\frac{1}{4m}\big(\cos 2m\theta(v_1\wedge e_2+v_2\wedge e_1)-\sin 2m\theta (v_1\wedge e_1-v_2\wedge e_2)\big). 
   \end{split}
\end{equation}
In particular,
\[
\frac{1}{\hat \epsilon_k}\abs{\frac{1}{2\pi}\int_0^{2\pi} \partial_t \hat{\n}_k d\theta} \qquad \text{converges to} \quad \frac{1}{2}.
\]

\item[3)] $\frac{\hat A_{k,tt}}{\hat \epsilon_k}$, $\frac{\hat A_{k,\theta\theta}}{\hat \epsilon_k}$ converges to
$$
\frac{1}{2}e^{-mt}(v_1\cos m\theta+v_2\sin m\theta),
$$
and $\frac{\hat A_{k,\theta}}{\hat \epsilon_k}$ converges to
$$
\frac{1}{2}e^{-mt}(v_1\sin m\theta-v_2\cos m\theta).
$$
\item[4)] $\hat K_k/\hat \epsilon_k^2$ converges to 
$$
\frac{e^{2mt}}{4}\big( (|v_1|^2-|v_2|^2)\cos 2m\theta+2\langle v_1, v_2\rangle \sin 2m\theta\big)
$$ 
and the normal curvature 
$\frac{\hat K^{\bot}_k}{\hat\epsilon_k^2}$ converges to zero.

\item[5)] If $f_k$ also satisfies A3), then $\frac{\tau_2(f_k,S)}{\hat \epsilon_k}$ converges to $-4\pi \mathbb L_{v_1,v_2}(S)$
and
$$
\lim_{k\rightarrow \infty}\frac{\|\tau_2(f_k,\cdot)\|}{\sqrt{W(f_k,Q_{i_k})}}=2\sqrt{\frac{2\pi }{L}},
$$
where the norm of $\tau_2(f_k,\cdot)$ is the natural one as a linear function from $\so$ to $\R$ (compare with Definition \ref{defi:l}). 
\end{itemize}
Here all above convergences (except the last one) are in $C^\infty_{loc}(\Real \times S^1)$ and we pass to subsequences if necessary.
\end{thm}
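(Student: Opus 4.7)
The plan is to unpack Theorem \ref{main1} at index $i_k$: since $i_k-\mathfrak a_k,\mathfrak b_k-i_k\to\infty$, \eqref{eqn:goodmiddle} yields $\nu_{i_k}+|\lambda_{i_k}-1|+|1-\mu_{i_k}|\to 0$. Starting from Part~1, the other parts follow by rigid structural computations for conformal immersions; Part~5 requires the most delicate bookkeeping and is the main obstacle.

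\emph{Part 1.} The bounds $\nu_{i_k}\to 0$ and $\lambda_{i_k}\to 1$ say precisely that, in the $e^{-2mt}$-weighted $L^2$ on $Q_{i_k}$, $H_k$ is asymptotic to its projection $\mathcal{F}_{i_k}(H_k)$, which by \eqref{Fi} has the explicit form $e^{mt-m(2i_k-1)L/2}(c^1_k\cos m\theta+c^2_k\sin m\theta)$ for some $\R^n$-valued $c^j_k$. After the translation and scaling defining $\hat f_k$, this projection restricted to $Q_1$ reads $e^{mt}\cdot e^{c^{i_k}-mL/2}(c^1_k\cos m\theta+c^2_k\sin m\theta)$; passing to a subsequence, $e^{c^{i_k}-mL/2}c^j_k/\hat\epsilon_k\to v_j$, giving weighted-$L^2$ convergence of $\hat H_k/\hat\epsilon_k$ to $h$. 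Elliptic regularity for the Willmore equation in the almost-flat rescaled metric upgrades this to $C^\infty_{\mathrm{loc}}$. The normalization $|v_1|^2+|v_2|^2=1$ is forced by matching $W(\hat f_k,Q_1)=\hat\epsilon_k^2\pi L$ with $\int_{Q_1}|h|^2e^{-2mt}\,dtd\theta=\pi L(|v_1|^2+|v_2|^2)$.

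\emph{Parts 3 and 2.} The trace identity relating $\hat A_{k,tt}+\hat A_{k,\theta\theta}$ to $\hat H_k e^{2\hat u_k}$, together with Part~1, immediately delivers the limit of $(\hat A_{k,tt}+\hat A_{k,\theta\theta})/\hat\epsilon_k$. To separate the three components, I invoke Codazzi: in the flat-limit metric $e^{-2mt}(dt^2+d\theta^2)$, the Codazzi identities reduce to first-order relations among $\hat A_{k,tt},\hat A_{k,\theta\theta},\hat A_{k,t\theta}$ and $\hat H_k$ (the Christoffel symbols contribute linear $\pm m$ terms). Matching Fourier modes in $m\theta$ uniquely pins down the three limits of Part~3. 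For Part~2, I differentiate $\hat\n_k=e^{-2\hat u_k}\partial_t\hat f_k\wedge\partial_\theta\hat f_k$ and substitute $\partial_t^2\hat f_k=\hat u_{k,t}\partial_t\hat f_k-\hat u_{k,\theta}\partial_\theta\hat f_k+\hat A_{k,tt}$ (and its analogues) to obtain $\partial_t\hat\n_k,\partial_\theta\hat\n_k$ as explicit combinations of $\hat A_k$ and $\partial\hat f_k$. Dividing by $\hat\epsilon_k$, applying the limits of Part~3 and $\hat f_k\to f_\infty$, and integrating with initial condition $\int_0^{2\pi}(\hat\n_k-\bar\n_k)(0,\theta)d\theta=0$ yields the formula for $v$ in \eqref{eqn:limitv}; the identity for $\frac{1}{2\pi}\int\partial_t\hat\n_k\,d\theta$ follows at once.

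\emph{Parts 4 and 5.} Part~4 is a direct substitution into the formulas for $\hat K_k$ and $\hat K^\perp_k$ in terms of $\hat A_k$; the vanishing of $\hat K^\perp_k/\hat\epsilon_k^2$ is forced by Part~3, which expresses the three components of $\hat A_k/\hat\epsilon_k$ as scalar multiples of vectors in the plane spanned by $v_1,v_2$, so that $\hat A_k\wedge\hat A_k$ is subleading. For Part~5, I evaluate the $t$-independent $\tau_2(f_k,S)$ inside $Q_{i_k}$ and express every factor via the scaling relations $H_k=e^{-c^{i_k}}\hat H_k$, $A_{k,ij}=e^{c^{i_k}}\hat A_{k,ij}$, $\partial_j f_k=e^{c^{i_k}}\partial_j\hat f_k$, $g_k^{ij}=e^{-2c^{i_k}}\hat g_k^{ij}$, and $f_k=\mathrm{const}_k+e^{c^{i_k}}\hat f_k$. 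The $\mathrm{const}_k$-dependent pieces reorganise into $\tau_1(f_k,S\,\mathrm{const}_k)$ via $\tau_2(f+c,S)=\tau_2(f,S)+\tau_1(f,Sc)$; this is $o(\hat\epsilon_k)$ because the $\theta$-zero mode of $h$ vanishes and A3) controls the growth of $\mathrm{const}_k$ against $e^{-c^{i_k}}$. The remaining integrals against $\cos m\theta,\sin m\theta$ of the limits $h,\hat A_k/\hat\epsilon_k,f_\infty$ collapse by orthogonality to $-4\pi\,\mathbb L_{v_1,v_2}(S)$. The final norm identity then follows from $\|\mathbb L_{v_1,v_2}\|=1/\sqrt 2$ (Definition~\ref{defi:l} with $|v_1|^2+|v_2|^2=1$) combined with $\hat\epsilon_k=\sqrt{W(f_k,Q_{i_k})/(\pi L)}$.

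The main obstacle is the bookkeeping in Part~5: the additive shift $\mathrm{const}_k$ and the exponential scalings $e^{\pm c^{i_k}}$ must cancel cleanly to leave a well-defined $O(\hat\epsilon_k)$ limit, and A3) enters precisely here to ensure the $\tau_1$-contribution is subleading. By contrast, the Codazzi and curvature computations (Parts~3 and~4) are essentially mechanical once the flat structure of the limit is identified.
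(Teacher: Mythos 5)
Your Parts 1 and 5 follow essentially the paper's route (rescale, identify the harmonic limit $h=\F_1(h)$ orthogonal to $e_1,e_2$; use the transformation law $\tau_2(f+v,S)=\tau_2(f,S)+\tau_1(f,Sv)$ and A3) to discard the translation term, then compute the surviving integral by orthogonality). The genuine gap is in your derivation of Part 3, and hence of Parts 2 and 4 which you build on it. You claim that the trace identity $\hat A_{k,tt}+\hat A_{k,\theta\theta}=e^{2\hat u_k}\hat H_k$ together with the Codazzi equations and ``matching Fourier modes in $m\theta$'' uniquely pins down the three limits. It does not. In complex notation $z=t+i\theta$, Codazzi determines $\partial_{\bar z}$ of the (rescaled, limiting) Hopf differential $\Phi=\frac14(A_{tt}-A_{\theta\theta}-2iA_{t\theta})$ in terms of $\partial_z h$, so $\Phi$ is fixed only up to a holomorphic quadratic differential $\sum_n c_n e^{nz}$. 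The available growth control (from Corollary \ref{comparison}, $\int_{Q_j}e^{2mt}\abs{\Phi}^2\,dtd\theta\leq C(3/2)^{\abs{j}}$ for all $j\in\mathbb Z$) eliminates every mode except $n=-m$; but $c\,e^{-mz}=c\,e^{-mt}(\cos m\theta-i\sin m\theta)$ has exactly the same profile --- $m$-th Fourier mode with $e^{-mt}$ decay --- as the asserted limits, so Fourier matching cannot distinguish the true answer from the one-parameter family obtained by shifting $\Phi$ by $c\,e^{-mz}$. This is not a cosmetic issue: the undetermined coefficient is precisely the quantity that encodes the linear-in-$t$ drift of the Gauss map, i.e.\ the term $\frac12(e_1\wedge v_2-e_2\wedge v_1)t$ in \eqref{eqn:limitv} and the length of the limiting geodesic, which is the whole point of the theorem.

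The missing input is an additional algebraic identity tying $d\n$ (equivalently $A$) to the immersion. The paper proceeds in the opposite order: it first passes to the limit in the tension-field equation \eqref{tension.G} for $\hat\n_k$, obtaining $\Delta v=$ (explicit source built from $h$ and $f_\infty$); the residual harmonic ambiguity $w=(\text{constant 2-vector})\,t+b$ is then killed by two structural facts --- that $v_t$ must be tangent to $G(2,n)$ at $-e_1\wedge e_2$, and the interior-product identity $v_t\lrcorner f_{\infty,t}=-v_\theta\lrcorner f_{\infty,\theta}$ coming from \eqref{der.G} --- which determine the coefficients $a^{i\alpha}$ in terms of $v_1,v_2$. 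Only afterwards is Part 3 read off from $v$ via $\hat A_{k,i2}=\partial_i\hat\n_k\lrcorner\partial_1\hat f_k$, etc. If you wish to keep your order of argument, you must supply an identity of this kind (or an equivalent second-order expansion of $\hat f_k$, which is Theorem \ref{2nd.asymp.} and would be circular here) to fix the holomorphic $e^{-mz}$ coefficient; trace plus Codazzi plus growth bounds alone leave it free.
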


As an immediate corollary, we can use an argument by contradiction to show
\begin{cor}
For $f_k$ and $i_k$ as in Theorem \ref{main1}, we have
\[
\lim_{l\rightarrow+\infty}\lim_{k\rightarrow \infty}\max_{\mathfrak a_k+l<i<\mathfrak b_k-l}\left|\frac{\|\tau_2(f_k,\cdot)\|}{\sqrt{W(f_k,Q_{i})}}-2\sqrt{\frac{2\pi}{L}}\right| + \abs{\frac{1}{\epsilon_k} \abs{\frac{1}{2\pi}\int_0^{2\pi} \partial_t \hat \n_k d\theta} -\frac{1}{2} }=0.
\]
\end{cor}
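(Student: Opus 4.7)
The plan is a standard contradiction argument that upgrades the pointwise-in-$i_k$ conclusion of Theorem \ref{neck.first.order} to the uniform-in-$i$ statement of the corollary. Suppose the conclusion fails. Then there exist $\delta>0$, a sequence $l_j\to+\infty$, a subsequence (still denoted $k$) and indices $i_k$ with $\mathfrak a_k+l_{j(k)}<i_k<\mathfrak b_k-l_{j(k)}$ (so that $i_k-\mathfrak a_k\to\infty$ and $\mathfrak b_k-i_k\to\infty$) for which
\[
\left|\frac{\|\tau_2(f_k,\cdot)\|}{\sqrt{W(f_k,Q_{i_k})}}-2\sqrt{\frac{2\pi}{L}}\right| + \left|\frac{1}{\hat\epsilon_k}\left|\frac{1}{2\pi}\int_0^{2\pi}\partial_t\hat\n_k\, d\theta\right|-\frac12\right|\ge\delta,
\]
where $\hat f_k$, $\hat\n_k$, and $\hat\epsilon_k$ are built from $f_k$ and $i_k$ via \eqref{eqn:hatf}--\eqref{def.hat.epsilon}.

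Now apply Theorem \ref{neck.first.order} directly to this choice of $i_k$. After passing to a further subsequence and choosing the orthonormal basis $(e_i)$ supplied by the theorem, part 2) yields
\[
\frac{1}{\hat\epsilon_k}\left|\frac{1}{2\pi}\int_0^{2\pi}\partial_t\hat\n_k\, d\theta\right|\longrightarrow \frac12,
\]
and part 5) (which is available because $f_k$ satisfies A1)--A3)) together with the identity $\sqrt{W(f_k,Q_{i_k})}=\sqrt{\pi L}\,\hat\epsilon_k$ from the definition of $\hat\epsilon_k$ gives
\[
\frac{\|\tau_2(f_k,\cdot)\|}{\sqrt{W(f_k,Q_{i_k})}}\longrightarrow 2\sqrt{\frac{2\pi}{L}}.
\]
Both limits hold along the same subsequence (after a diagonal extraction), contradicting the lower bound $\delta$.

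The only subtlety is the choice of the index $i_k$ that realises (or nearly realises) the maximum: for each $k$, pick $i_k\in(\mathfrak a_k+l,\mathfrak b_k-l)$ achieving the maximum in the expression, and then diagonalise in $l$ so that $i_k-\mathfrak a_k\to\infty$ and $\mathfrak b_k-i_k\to\infty$. This is routine, and Theorem \ref{neck.first.order} is exactly designed to accept any such sequence, so there is no real obstacle once the theorem is in hand. The main point is simply to observe that both quantities in the corollary are stated at the level of individual blocks $Q_i$ and are continuous functions of the limit picked out by Theorem \ref{neck.first.order}, so that failure of uniformity can be ruled out by contradiction.
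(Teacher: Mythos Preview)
Your proof is correct and follows essentially the same approach as the paper. The paper introduces this corollary with the sentence ``we can use an argument by contradiction to show'' and later (Remark~\ref{rem:uniform}) spells out exactly the argument you give: if the limit fails, extract a subsequence $i_k$ with $i_k-\mathfrak a_k,\ \mathfrak b_k-i_k\to\infty$ that contradicts Parts 2) and 5) of Theorem~\ref{neck.first.order}.
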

This result is enough for the proof of the part of Theorem \ref{thm:main} on the loss of energy and the length of the neck. However, to study the limit of the image of the Gauss maps, we need to investigate the higher-order asymptotic properties of $f_k$.

\begin{thm}\label{main2}
Assume that $f_k$ satisfies A1)-A3) and Let $\mathfrak a_k, \mathfrak b_k$ be as given in Theorem \ref{main1}.
Then there exist $\mathfrak a_k'$ and $\mathfrak b_k'$ satisfying $\mathfrak a_k<\mathfrak a_k'<\mathfrak b_k'<\mathfrak b_k$, such that
\begin{itemize}
\item[1)] For any $p>0$,
$$
\lim_{k\to \infty} (\sum_{i=0}^{\mathfrak{a}_k'}+\sum_{i=\mathfrak b_k'}^{m_k})\|A_k\|_{L^2(Q_i)}^p =0.
$$
\item[2)]When  $\mathfrak a_k'<i<\mathfrak b_k'$, we have
$$
\int_{Q_i}e^{-2u_k}|f_k-\G_i(f_k)|^2dtd\theta\leq CW(f_k,Q_i)
$$
and
\begin{equation}\label{decay.H-FH}
\int_{Q_i}e^{2u_k}|H_k-\F_i(H_k)|^2dtd\theta\leq C(W(f_k,Q_i))^2.
\end{equation}
\end{itemize}
\end{thm}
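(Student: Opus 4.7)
The plan is to refine Theorem \ref{main1} by invoking the two sharper 3-circle estimates listed in the introduction: the 3-circle for the residual mean curvature $H_k-\F_i(H_k)$ (Theorem \ref{2nd.asymp.}), which applies once $\lambda_i$ is bounded away from $0$ and $\nu_i$ is small, and the 3-circle for the Gauss-map / immersion equation (Lemma \ref{3circle A}), which applies once $\mu_i$ is controlled. Both kick in throughout the improved middle region provided by Theorem \ref{main1}, where $\nu_i+|\lambda_i-1|+|1-\mu_i|\to 0$.

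First I would choose $\mathfrak a_k'$ and $\mathfrak b_k'$. By \eqref{eqn:goodmiddle} there is a sequence $l_k\to\infty$ along which the middle-region defects vanish uniformly on $\mathfrak a_k+l_k< i<\mathfrak b_k-l_k$. Set $\mathfrak a_k':=\mathfrak a_k+l_k$ and $\mathfrak b_k':=\mathfrak b_k-l_k$. Claim (1) then follows from \eqref{eqn:trivialends} together with A2): each of the $l_k$ extra strips $Q_i$ contributes at most $\Theta_k^{p/2}$ to the $L^p$-sum, and by letting $l_k$ grow slowly enough compared to $\Theta_k^{-1}$ (for instance $l_k=\lfloor\log(1+\Theta_k^{-1})\rfloor$) one has $l_k\Theta_k^{p/2}\to 0$ for every $p>0$ simultaneously.

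For claim (2) I would work in the interior region $\mathfrak a_k'<i<\mathfrak b_k'$. Writing $H_k=\F_i(H_k)+R_{k,i}$, the residual $R_{k,i}$ has no resonant Fourier mode at frequency $m$, so Theorem \ref{2nd.asymp.} provides a three-circle contraction with factor $q<1/2$ for $N_i:=\int_{Q_i}e^{2u_k}|R_{k,i}|^2\,dt\,d\theta$. Iterating the contraction from both endpoints $\mathfrak a_k'$ and $\mathfrak b_k'$ gives exponential decay of $N_i$ into the middle. This decay is then upgraded into the absolute bound $N_i\le CW(f_k,Q_i)^2$ by exploiting the structure of the Willmore equation: the nonlinear source driving $R_{k,i}$ scales cubically in $H_k$, so its $L^2$-norm on $Q_i$ is controlled by $W(f_k,Q_i)^{3/2}$, and absorbing this source through the 3-circle contraction produces the quadratic bound. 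The estimate for $f_k-\G_i(f_k)$ proceeds along the same lines, replacing Theorem \ref{2nd.asymp.} by Lemma \ref{3circle A} applied to the Gauss-map equation: $f_k-\G_i(f_k)$ has vanishing $e^{-mt}$ Fourier mode, and $\nabla(f_k-\G_i(f_k))$ is controlled by $|A_k|$ plus harmonic terms, yielding the linear bound $\int_{Q_i}e^{-2u_k}|f_k-\G_i(f_k)|^2\le CW(f_k,Q_i)$.

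The main difficulty I anticipate is producing a \emph{uniform} constant $C$ in (2), independent of $i$ and $k$. Individual 3-circle iterations tend to accumulate constants geometrically, so the remedy is to iterate from both endpoints simultaneously and exploit the fact, coming from Theorem \ref{neck.first.order}, that $W(f_k,Q_i)$ is comparable to $W(f_k,Q_{i_k})$ across the whole middle region. The subtlest point is that the end values $N_{\mathfrak a_k'}$, $N_{\mathfrak b_k'}$ must be bounded by $W^2$ (not merely by $W$) in order to feed into the geometric-mean side of the 3-circle; this is where the slow growth of $l_k$ and the smallness of $\nu_{\mathfrak a_k'},\nu_{\mathfrak b_k'}$ will be crucially used.
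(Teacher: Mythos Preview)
Your proposal has two genuine structural gaps.

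First, you invoke Theorem \ref{2nd.asymp.} as the 3-circle tool for $H_k-\F_i(H_k)$, but in the paper Theorem \ref{2nd.asymp.} is a \emph{consequence} of Theorem \ref{main2} (its proof in Section \ref{sec:higher} opens with ``By Theorem \ref{main2}, we have \dots''). So this is circular. Likewise, Lemma \ref{3circle A} is a 3-circle statement for $\int_{Q_i}|A|^2$, not for $f_k-\G_i(f_k)$; it does not give the first inequality in part 2).

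Second, and more substantively, your mechanism for producing the uniform bounds in 2) is not the right one. The paper does not iterate a nonlinear contraction from the endpoints inward. Instead it views both equations as \emph{linear inhomogeneous} equations,
\[
\Delta f_k = e^{2u_k}H_k, \qquad \Delta H_k = F_k,
\]
and applies the linear 3-circle Corollaries \ref{decay.linear2} and \ref{decay.linear3}. Those corollaries take as input only that the weighted $L^2$ norm of the source is comparable on neighboring $Q_i$ (which follows from Corollary \ref{comparison}), and they output a trichotomy: an automatically determined middle interval $[a,b]$ on which the source term \emph{dominates} the residual (this is exactly the inequality $\Lambda_i > \frac{\epsilon_0}{10}E_i^\dagger$ in part 3) of those corollaries, giving the bounds in 2) with a uniform $C=10/\epsilon_0$), and two end intervals on which the residual decays/grows exponentially. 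The summability in 1) then comes from part 4) of those corollaries together with the smallness of the boundary values (Corollary \ref{cor:lambdak} for $f_k-\G_i(f_k)$, and the crude bound $\|e^{u_k}(H_k-\F_i(H_k))\|_{L^2}\le C\Theta_k$ for the mean-curvature residual). Crucially, the constants are uniform because they come directly from the linear corollary, not from an iteration.

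Your ad hoc choice $\mathfrak a_k'=\mathfrak a_k+l_k$ with $l_k\sim\log\Theta_k^{-1}$ does give 1), but it does not by itself give 2): knowing that $\nu_i$ and $|1-\mu_i|$ are small on the shrunken middle tells you $E_i^\dagger$ is small relative to $E_i^*$, not that it is bounded by $W(f_k,Q_i)^2$. That quadratic bound is precisely the content of the source-dominated region in Corollary \ref{decay.linear3}, using the pointwise estimate $|\Delta H_k|\le C\|A_k\|_{L^2}(|H_k|+|\nabla H_k|)$ from Lemma \ref{equ. struc.} and $\epsilon$-regularity to get $\int_{Q_i}e^{2u_k}|\Delta H_k|^2\le C(\int_{Q_i}|A_k|^2)^2 \le CW(f_k,Q_i)^2$.
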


The estimates proved in Theorem \ref{main2} allow us to prove more about the asymptotic limit of $f_k$ in the neck.
\begin{thm}\label{2nd.asymp.}
Let $f_k$ and $\mathfrak a_k', \mathfrak b_k'$ be as in Theorem \ref{main2}. For any $i_k$ with $i_k-\mathfrak a_k',\mathfrak b_k'-i_k \to \infty$, using the notations defined in \eqref{eqn:hatf}-\eqref{def.hat.epsilon}, we have 
\begin{itemize}
\item[1)] $\frac{\hat f_k-\G_1(\hat f_k)}{\hat\epsilon_k}$ converges in $C^\infty_{loc}(\R \times S^1)$ (up to the addition of a constant) to
$$
\phi=(-\frac{1}{2m}(t-\frac{L}{2})-\frac{1}{4m^2})e^{-mt}(v_1\cos m\theta +v_2\sin m\theta).
$$
\item[2)] $\frac{\hat H_k-\F_1(\hat H_k)}{\hat\epsilon_k^2}$ 
converges in $C^\infty(\R \times S^1)$ to
a function with image in $span\{e_1,e_2\}$.
\end{itemize}
Here $e_1,e_2$ and $v_1,v_2$ are given by Theorem \ref{neck.first.order} for the same subsequence.
\end{thm}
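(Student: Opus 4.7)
The plan is to rescale the two ``second-order corrections'' $\hat f_k-\G_1(\hat f_k)$ and $\hat H_k-\F_1(\hat H_k)$ by $\hat\epsilon_k$ and $\hat\epsilon_k^2$ respectively, observe that each satisfies an almost-Poisson equation whose right-hand side is controlled by Theorem~\ref{neck.first.order}, extract uniform weighted $L^2$ bounds from Theorem~\ref{main2}, and pass to the $C^\infty_{loc}$ limit by standard elliptic bootstrap. A useful preliminary remark: both $\F_1(u)$ and $\G_1(u)$ are explicit multiples of $e^{\pm mt}(\cos m\theta,\sin m\theta)$ and are therefore harmonic in $(t,\theta)$, so subtracting them does not affect the Laplacian.

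For part (1), set $g_k:=(\hat f_k-\G_1(\hat f_k))/\hat\epsilon_k$. The conformal Laplace identity $\Delta\hat f_k=2e^{2\hat u_k}\hat H_k$ gives
\[
\Delta g_k=\frac{2e^{2\hat u_k}\hat H_k}{\hat\epsilon_k},
\]
and Theorem~\ref{neck.first.order}(1), combined with $e^{2\hat u_k}\to e^{-2mt}$ (inherited from $\hat f_k\to f_\infty$), shows that the right-hand side converges in $C^\infty_{loc}$ to $2e^{-mt}(v_1\cos m\theta+v_2\sin m\theta)$. A direct computation verifies that $\phi$ in the statement is a particular solution of this Poisson equation; the polynomial prefactor $-\frac{1}{2m}(t-L/2)-\frac{1}{4m^2}$ is chosen precisely so that $\F_1(\phi)=\G_1(\phi)=0$ and $\int_0^{2\pi}\phi(0,\theta)\,d\theta=0$ on $Q_1$. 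After rescaling, the second estimate in Theorem~\ref{main2} reads $\int_{Q_j}e^{-2\hat u_k}\abs{g_k}^2\,dt\,d\theta\le C$ on every fixed $Q_j$ for $k$ large, and combined with $\Delta g_k\to\Delta\phi$ and elliptic bootstrap, $g_k-\phi$ subconverges in $C^\infty_{loc}$ to a harmonic function $\eta$. The modes killed by the $\F_1,\G_1$ subtractions, together with the weighted $L^2$ bound, force $\eta$ to be a constant.

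For part (2), set $H'_k:=(\hat H_k-\F_1(\hat H_k))/\hat\epsilon_k^2$. Since $\F_1(\hat H_k)$ is harmonic, $\Delta H'_k=\Delta\hat H_k/\hat\epsilon_k^2$; the Willmore equation expresses $\Delta_g\hat H_k$ nonlinearly in $(\hat H_k,\hat A_k)$, both of which are $O(\hat\epsilon_k)$ by Theorem~\ref{neck.first.order}, so $\Delta H'_k\to 0$ in $C^\infty_{loc}$. The rescaled form of \eqref{decay.H-FH} supplies $\int_{Q_j}e^{2\hat u_k}\abs{H'_k}^2\,dt\,d\theta\le C$, and elliptic regularity produces a $C^\infty_{loc}$ subsequential limit $\tilde h$ that is harmonic. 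To identify the image of $\tilde h$, expand the normality relation $\hat H_k\cdot\partial_\alpha\hat f_k=0$ to order $\hat\epsilon_k^2$, using part (1) to write $\partial_\alpha\hat f_k=\partial_\alpha f_\infty+\hat\epsilon_k\partial_\alpha\phi+o(\hat\epsilon_k)$ with $\phi$ valued in $\mbox{span}\{v_1,v_2\}\subset\mbox{span}\{e_3,\ldots,e_n\}$. Since $\partial_\alpha f_\infty\in\mbox{span}\{e_1,e_2\}$, the $\hat\epsilon_k^2$-order identity determines the $\mbox{span}\{e_1,e_2\}$ component of $\tilde h$, while its $\mbox{span}\{e_3,\ldots,e_n\}$ component satisfies a harmonic equation whose only admissible solution, after accounting for the mode already captured by $\F_1(\hat H_k)$, is zero.

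The main obstacle is the exact identification of the limits rather than compactness itself: elliptic regularity only places them in the space of (almost-)harmonic functions, and one must combine the Fourier-mode structure of $\F_1,\G_1$ with the geometric constraint of normality at second order to extract the stated formulas. In particular, the polynomial prefactor in $\phi$ is dictated by the normalization $\F_1(\phi)=\G_1(\phi)=0$, and the vanishing of the $\mbox{span}\{e_3,\ldots,e_n\}$ component of $\tilde h$ is a consequence of the interplay between the mode killed by $\F_1$ and the rotation of the normal plane induced by the first-order deformation of $\hat f_k$ away from $f_\infty$.
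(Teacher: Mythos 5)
The decisive gap is in part (2). You assert that $\Delta\bigl((\hat H_k-\F_1(\hat H_k))/\hat\epsilon_k^2\bigr)\to 0$ because the right-hand side of the Willmore equation is nonlinear in $(\hat H_k,\hat A_k)$, which are $O(\hat\epsilon_k)$. But the nonlinearity is quadratic, so $\Delta \hat H_k=O(\hat\epsilon_k^2)$ and, after dividing by $\hat\epsilon_k^2$, the source converges not to zero but to the nonzero limit $-2\partial_q\bigl(h\cdot A_{pq}e^{2mt}\partial_p f_\infty\bigr)+\frac12{\rm div}(\abs{h}^2\nabla f_\infty)$ (cf.\ \eqref{eqn:deltapsi}), with $h$, $A_{pq}$, $f_\infty$ as in Theorem~\ref{neck.first.order}. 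Hence the limit $\psi$ is \emph{not} harmonic and the ``elliptic regularity produces a harmonic limit'' step collapses; worse, if it were harmonic, then $\F_1(\psi)=0$ together with your weighted bound would force $\psi\equiv 0$ by the same mode analysis, which is too strong. The correct route is to observe that the limiting source is valued in ${\rm span}\{e_1,e_2\}$ because $f_\infty$ and all of its derivatives are; therefore only the ${\rm span}\{e_3,\dots,e_n\}$ component $\psi''$ is harmonic, and it is killed by combining $\F_1(\psi'')=0$ with a bound $\norm{e^{-mt}\psi''}_{L^2(Q_j)}\le C(1+\abs{j})$ valid for \emph{all} $j\in\mathbb Z$ and the three-circle lemma (equivalently, Fourier-mode analysis). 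Your detour through the normality relation $\hat H_k\cdot\partial_\alpha \hat f_k=0$ only constrains the ${\rm span}\{e_1,e_2\}$ component, since $\partial_\alpha f_\infty$ lies in that span, and so does not address what the statement actually requires.

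Part (1) follows the paper's strategy, but two points need repair. First, with the paper's convention $\Delta f=e^{2u}H$ there is no factor $2$; a direct computation gives $\Delta\phi=e^{-mt}(v_1\cos m\theta+v_2\sin m\theta)$, which is half of your claimed right-hand side, so with your normalization the limit would be $2\phi$, not $\phi$. (Also $\F_1(\phi)\neq 0$; only $\G_1(\phi)=0$, which is the relevant normalization here.) Second, and more substantively, Theorem~\ref{main2} bounds $\int_{Q_j}e^{-2u_k}\abs{f_k-\G_j(f_k)}^2$ on each $Q_j$ with its \emph{own} $\G_j$; to obtain the weighted bound for $(\hat f_k-\G_1(\hat f_k))/\hat\epsilon_k$ on every $Q_j$ you must additionally control $\abs{\G_1(\hat f_k)-\G_j(\hat f_k)}/\hat\epsilon_k$, which the paper does via the identity \eqref{ODE.g}, and you must track that the resulting constant grows at most polynomially in $j$ — this polynomial growth is precisely what allows the final harmonic-function argument to exclude every mode except those annihilated by $\G_1$, pinning the limit down to $\phi$ up to a constant. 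The analogous comparison of $\F_1$ with $\F_j$ is needed in part (2) as well.
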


Theorem \ref{main2} and Theorem \ref{2nd.asymp.} are used in the proof of the claim about geodesic in Theorem \ref{thm:main}.

The paper is organized as follows. In Section \ref{sec:pde}, we prove some 3-circle lemmas on general elliptic equations that may be of independent interest. In Section \ref{sec:limit}, we prove Theorem \ref{main1} and Theorem \ref{neck.first.order}. In Section \ref{sec:higher}, we prove Theorem \ref{main2} and Theorem \ref{2nd.asymp.}. We put these together in Section \ref{sec:proof} to prove Theorem \ref{thm:main}. Finally, we present a family of explicitly defined Willmore surfaces and use them to construct a sequence $f_k$ satisfying A1)-A3). Indeed, they are the model case for the asymptotic behaviors that are studied in this paper.

\section{3-circle lemma for elliptic equation}
\label{sec:pde}
In this section, we first define what we mean by 3-circle lemma and then prove various versions of it. 

\begin{defi}
\label{def:3circle}
Let $Q_i=[(i-1)L,iL]\times S^1$ and $q>0$.
For each $Q_i$, we attach some quantity $\Phi_i\geq 0$. We say $\Phi_i$ satisfies the 3-circle 
lemma on $Q_1\cup Q_2\cup Q_3$ for $(q,L)$, if 
\begin{equation}\label{eqn:qphi}
	\Phi_2\leq e^{-qL} (\Phi_{1}+\Phi_{3}).
\end{equation}
\end{defi}

Throughout this paper, $L$ depends on $q$. Hence, we may assume $e^{qL}>2$. Once $q$ and $L$ are fixed in some context, we set
\begin{equation}
    \label{eqn:qprime}
    q'=q-\frac{\log 2}{L}.
\end{equation}
\begin{lem}\label{basic observation} 
Let $e^{qL}>2$ and $q'$ be given in \eqref{eqn:qprime}.
Assume  $\Phi_i$ satisifes 3-circle lemma on $Q_{i-1}\cup Q_{i}\cup Q_{i+1}$ for $(q,L)$.
Then 
\begin{itemize}
\item [1)]
either $\Phi_i\le e^{-q'L}\Phi_{i-1}$ or $\Phi_i\le e^{-q'L}\Phi_{i+1}$;
\item [2)] when $\Phi_{i-1}\le e^{-q'L}\Phi_i$, there holds $\Phi_{i}\le e^{-q'L}\Phi_{i+1}$;
\item [3)] when $\Phi_{i+1}\le e^{-q'L}\Phi_i$, there holds $\Phi_{i}\le e^{-q'L}\Phi_{i-1}$;
\end{itemize}
\end{lem}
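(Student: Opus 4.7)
The lemma is an elementary algebraic consequence of the single inequality $\Phi_i \le e^{-qL}(\Phi_{i-1}+\Phi_{i+1})$, together with the choice $e^{-q'L}=2e^{-qL}$ (which follows directly from the definition $q'=q-(\log 2)/L$) and the standing hypothesis $e^{qL}>2$. The plan is to argue (1) by contradiction, and then deduce (2) and (3) by substituting the assumed bound into the 3-circle inequality and solving for the remaining ratio.

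For part (1), I would assume for contradiction that both $\Phi_i>e^{-q'L}\Phi_{i-1}$ and $\Phi_i>e^{-q'L}\Phi_{i+1}$ hold. Rearranging each gives $\Phi_{i\pm 1}<e^{q'L}\Phi_i$, so that $e^{-qL}\Phi_{i\pm 1}<e^{(q'-q)L}\Phi_i=\tfrac{1}{2}\Phi_i$. Adding the two bounds contradicts $\Phi_i\le e^{-qL}(\Phi_{i-1}+\Phi_{i+1})$. Hence at least one of the two asserted inequalities must hold.

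For part (2), assuming $\Phi_{i-1}\le e^{-q'L}\Phi_i$, I would plug this into the 3-circle inequality to obtain
\[
\Phi_i \le e^{-qL}\bigl(e^{-q'L}\Phi_i+\Phi_{i+1}\bigr),
\]
which rearranges to $\Phi_i\bigl(1-e^{-(q+q')L}\bigr)\le e^{-qL}\Phi_{i+1}$. Since $e^{-(q+q')L}=2e^{-2qL}\le 1/2$ (using $e^{qL}>2$), the factor $1-e^{-(q+q')L}$ is at least $1/2$, and therefore $\Phi_i\le 2e^{-qL}\Phi_{i+1}=e^{-q'L}\Phi_{i+1}$. Part (3) is identical after swapping the roles of $i-1$ and $i+1$.

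There is no real obstacle here; the only point requiring attention is the bookkeeping of the factor $2$ between $e^{-qL}$ and $e^{-q'L}$ and the verification that $1-e^{-(q+q')L}\ge 1/2$, both of which are automatic from the hypothesis $e^{qL}>2$.
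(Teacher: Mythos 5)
Your proposal is correct and follows essentially the same route as the paper: part (1) by contradiction using $e^{-q'L}=2e^{-qL}$, and parts (2)–(3) by substituting the assumed bound into the 3-circle inequality and checking that $1-e^{-qL}e^{-q'L}=1-2e^{-2qL}\ge 1/2$ under $e^{qL}>2$. No gaps.
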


\begin{proof}
For the proof of 1), if
\[
\Phi_i> e^{-q'L}\Phi_{i-1} \quad \text{and}\quad  \Phi_i> e^{-q'L}\Phi_{i+1},
\]
then 
\[
\Phi_i > \frac{1}{2}e^{-q'L} (\Phi_{i-1}+\Phi_{i+1})= e^{-qL}(\Phi_{i-1}+\Phi_{i+1}),
\]
which contradicts \eqref{eqn:qphi}.

For the proof of 2),  if $\Phi_{i-1}\le e^{-q'L}\Phi_{i}$, then \eqref{eqn:qphi} implies that
\[
(1-e^{-qL} e^{-q' L})\Phi_i \leq e^{-qL}\Phi_{i+1}.
\]
The definition of $q'$ and the assumption $e^{qL}\geq 2$ together imply that
\[
\frac{e^{-qL}}{(1-e^{-qL} e^{-q' L})}\leq e^{-q' L}.
\]
This concludes the proof of 2). The proof of 3) is similar.
\end{proof}

Here is a 3-circle lemma for harmonic functions. It is well known and we refer to Section 3 of \cite{Li-Yin} for a proof.
\begin{lem}\label{3-circle.harmonic}
For each $m\in \mathbb Z_+$ and $q\in (0,2)$, there is $L_0(m,q)>0$ such that if  a harmonic function defined on $Q$ has the  expansion
\begin{equation*}
u=a+bt+\sum_{k=1}^\infty\left((a_ke^{-kt}+b_ke^{kt})\cos k\theta+  (a_k'e^{-kt}+b_{k}'e^{kt})\sin k\theta\right)
\end{equation*}
with $b_m=b_m'=0$,
then $\int_{Q_2}u^2e^{-2mt} dtd\theta=0$ or
$$
\int_{Q_2}u^2e^{-2mt} dtd\theta<e^{-qL}\left(\int_{Q_1}u^2e^{-2mt} dtd\theta+\int_{Q_3}u^2e^{-2mt} dtd\theta\right),
$$
as long as $L>L_0(m,q)$.
\end{lem}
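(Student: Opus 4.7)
The plan is to decompose $u$ into its Fourier modes in $\theta$ and observe that both the $L^2$ norm (via Parseval) and the weight $e^{-2mt}$ (which is $\theta$-independent) respect this decomposition. Thus it suffices to prove the three-circle inequality for each Fourier mode $u_k(t)\cos k\theta$ (or $\sin k\theta$) separately, with a single choice of $L$ working for all $k$ and both the zero mode $a+bt$.

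First, I would handle the zero mode $u=a+bt$. Here $\int_{Q_i}(a+bt)^2e^{-2mt}dtd\theta$ is a pure one-dimensional integral against an exponential weight, so an elementary computation (or change of variable $t\mapsto t+L$) gives that the ratio $\int_{Q_2}/\int_{Q_1}$ is $O(e^{-2mL})$, which yields \eqref{eqn:qphi} with any $q<2m$ provided $L$ is large. Next, for $k\geq 1$ with $k\neq m$, the square of the radial profile expands as
\begin{equation*}
(a_k e^{-kt}+b_k e^{kt})^2 e^{-2mt} = a_k^2 e^{-2(k+m)t} + 2a_k b_k e^{-2mt} + b_k^2 e^{2(k-m)t}.
\end{equation*}
The extreme exponentials have rates $2(k+m)\geq 2(m+1)$ and $2|k-m|\geq 2$; the cross term has the intermediate rate $2m$. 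A direct integration and AM-GM (or Cauchy–Schwarz applied to $a_k b_k$ against the other two coefficients) shows that the full integral on $Q_i$ is comparable, up to $L$-independent constants, to the sum of the two extreme monomial integrals, and each of those satisfies the three-circle inequality with decay rate bounded below by $2$. Choosing $L$ large absorbs the constants.

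The resonant mode $k=m$ is precisely where the hypothesis $b_m=b_m'=0$ is used: it kills the terms $e^{mt}\cos m\theta$ and $e^{mt}\sin m\theta$, so the squared radial profile reduces to $a_m^2 e^{-4mt}$ (and similarly for the sine part). This has decay rate $4m\geq 4$, better than needed; without the hypothesis the constant term $b_m^2$ would appear, on which no three-circle decay is possible, and this is exactly the obstruction the authors later encode in the residue.

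Putting the three cases together and taking $L_0(m,q)$ to be the largest of the thresholds produced above yields the claim. The main obstacle, as noted, is the intermediate cross term $2a_kb_k e^{-2mt}$ for $k\neq m$: since $a_k b_k$ can have either sign, one must argue carefully that it cannot defeat the decay of the two genuinely exponential terms. The cleanest treatment is to note that $(a_k e^{-kt}+b_k e^{kt})^2$ is, up to constants, the boundary trace on a circle of the subharmonic function $|a_k z^{-k}+b_k z^k|^2$ in an annulus, so the whole discussion is a weighted reformulation of Hadamard's three-circles theorem; alternatively a direct but tedious computation suffices, as in Section 3 of \cite{Li-Yin} that the statement already cites.
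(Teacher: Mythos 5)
Your overall strategy is the standard one and is essentially the argument the paper outsources to Section 3 of \cite{Li-Yin}: decompose into Fourier modes in $\theta$ (the weight $e^{-2mt}$ and Parseval make the quantity additive over modes), verify the inequality mode by mode with a rate strictly larger than $q$, and observe that the hypothesis $b_m=b_m'=0$ removes exactly the resonant monomial $e^{2\cdot 0\cdot t}$ on which no decay is possible. The mode-by-mode rates you list ($2(k+m)\ge 2(m+1)$, $2|k-m|\ge 2$, $4m$ for the resonant mode, and $\ge 2$ for the affine mode after absorbing a polynomial factor) are all correct, and the worst case $2|k-m|=2$ is precisely why $q$ must be taken in $(0,2)$.

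The one step you should not leave to ``AM-GM'': the two-sided comparability of $\Phi_i^{(k)}:=\int_{Q_i}(a_ke^{-kt}+b_ke^{kt})^2e^{-2mt}\,dt$ with $A_i+B_i$, where $A_i=a_k^2\int_{Q_i}e^{-2(k+m)t}dt$ and $B_i=b_k^2\int_{Q_i}e^{2(k-m)t}dt$. The pointwise AM-GM bound $2|a_kb_k|e^{-2mt}\le A+B$ pointwise only yields the trivial lower bound $\Phi_i^{(k)}\ge 0$ and the upper bound $\Phi_i^{(k)}\le 2(A_i+B_i)$; but your chain of inequalities genuinely needs the \emph{lower} bound $\Phi_i^{(k)}\ge c\,(A_i+B_i)$ on the outer blocks $Q_1,Q_3$ (you bound $\Phi_2^{(k)}\le 2(A_2+B_2)\le 2e^{-2L}(A_1+B_1+A_3+B_3)$ and must then return to $\Phi_1^{(k)}+\Phi_3^{(k)}$). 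The correct tool is the integral Cauchy--Schwarz inequality: the cross term is at most $2\rho\sqrt{A_iB_i}$ with $\rho=\bigl(\int_{Q_i}e^{-2mt}dt\bigr)\bigl(\int_{Q_i}e^{-2(k+m)t}dt\bigr)^{-1/2}\bigl(\int_{Q_i}e^{2(k-m)t}dt\bigr)^{-1/2}$, and one must check that $\rho\le 1-c(m)$ uniformly in $k\ne m$ once $L\ge L_0(m)$; a direct computation gives, e.g., $\rho^2\le\frac{m^2-k^2}{m^2(1-e^{-2L})}$ for $k<m$, which is where both the strict gap $c(m)\sim m^{-2}$ and the $m$-dependence of $L_0(m,q)$ come from. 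With that quantitative non-parallelism in hand your argument closes; without it, the ``comparable up to $L$-independent constants'' claim is unsupported, since the profile $a_ke^{-kt}+b_ke^{kt}$ can nearly cancel on a given block. This is a fixable (and, as you note, classical) computation rather than a wrong approach, but it is the actual content of the lemma and deserves to be written out.
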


\subsection{A 3-circle lemma for linear elliptic equations}

In this subsection, we assume $u\in C^2$ is a solution of
\begin{align}\label{linear equation}
\Delta u=f
\end{align}
defined on $Q:=Q_1\cup Q_2\cup Q_3$.
In addition to the $\varphi$, $g^+$, $g^-$, $\F_i$ and $\G_i$ defined by \eqref{Fourier1}-\eqref{Gi} in the introduction. We also need to define
\begin{equation}\label{Fourier2}
\alpha(t)=\frac{1}{\pi}(\int_0^{2\pi}f\cos m\theta d\theta,\int_0^{2\pi}f\sin m\theta d\theta),
\end{equation}
and
\begin{align*}
\gamma_i(u)&=|g^+({(2i-1)L/2})|e^{-(2i-1)mL/2},\\
E_i(u)&=\int_{Q_i}|u|^2e^{-2mt}dtd\theta, \\
E_i^*(u)&=\int_{Q_i}|\F_i|^2e^{-2mt}dtd\theta=2\pi\gamma_{i}^2L/(2m)^2,\\
E_i^{\dagger}(u)&=\int_{Q_i}|u-\F_i(u)|^2e^{-2mt}dtd\theta.
\end{align*}

It follows from \eqref{linear equation} that
$$
(e^{-mt}g^+(t))'=\alpha(t)e^{-mt},\s (e^{mt}g^-(t))'=\alpha(t)e^{mt},
$$
from which we get 
\begin{equation}\label{ODE.g}
e^{-mt}g^+(t)-e^{-ms}g^+(s)=\int_{s}^t\alpha(\tau)e^{-m\tau}d\tau,\s  e^{mt}g^-(t)-e^{ms}g^-(s)=\int_{s}^t\alpha(\tau)e^{m\tau}d\tau.
\end{equation}

If $u$ is almost harmonic in the sense that the non-homogeneous term of \eqref{linear equation} is small (with respect to some weighted $L^2$ norm) compared to the weighted $L^2$ distance between $u$ and the space $\{e^{mt}(a\cos m\theta+b\sin m\theta)|a,b\in \mathbb{R}\}$, we still have a 3-circle lemma of the following form:
\begin{lem}\label{3-circle for relative harmonic function}
For any $q\in (0,2)$ and $L>L_0(m,q)$ (given in Lemma \ref{3-circle.harmonic}), there  exists $\epsilon_0=\epsilon_0(q,L)>0$ such 
 that if  $u\in C^2(Q)$ satisfies \eqref{linear equation}, and
\begin{align}\label{relatively almost harmonic}
\|e^{-mt}f\|_{L^2(Q)}^2\leq \epsilon_0 \int_{Q_2}e^{-2mt}|u-\F_2(u)|^2 dtd\theta,
\end{align}
 then
$\int_{Q_i}e^{-2mt}|u-\F_i(u)|^2 dtd\theta$
satisfies \eqref{eqn:qphi}.
\end{lem}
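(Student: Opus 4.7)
The plan is to argue by contradiction and reduce to the three-circle lemma for harmonic functions (Lemma \ref{3-circle.harmonic}). Suppose the conclusion fails for some $q \in (0, 2)$ and $L > L_0(m, q)$; then there is a sequence $u_k \in C^2(Q)$ solving $\Delta u_k = f_k$ with
$$
\|e^{-mt}f_k\|_{L^2(Q)}^2 \le \tfrac{1}{k} E_2^\dagger(u_k), \qquad E_2^\dagger(u_k) > e^{-qL}\bigl(E_1^\dagger(u_k) + E_3^\dagger(u_k)\bigr).
$$
A direct computation shows that any $h$ of the form $c \cdot e^{mt}(\cos m\theta, \sin m\theta)$ with $c \in \R^2$ satisfies $\F_i(h) = h$ for every $i$; consequently, replacing $u_k$ by $u_k - \F_2(u_k)$ preserves both $\Delta u_k = f_k$ and every $E_i^\dagger(u_k)$, while arranging $\F_2(u_k) = 0$, i.e.\ $g^+(u_k)(3L/2) = 0$. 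After further rescaling we may assume $E_2^\dagger(u_k) = 1$, so $\|e^{-mt}f_k\|_{L^2(Q)} \to 0$ and $E_1^\dagger(u_k) + E_3^\dagger(u_k) \le e^{qL}$.

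The second step is to derive a uniform bound $\|u_k\|_{L^2(Q, e^{-2mt})} \le C$. Starting from $g^+(u_k)(3L/2) = 0$ and integrating the ODE \eqref{ODE.g}, Cauchy–Schwarz (together with the pointwise bound $|\alpha_k(\tau)| \le C \|f_k(\tau, \cdot)\|_{L^2(S^1)}$ for the Fourier coefficient $\alpha_k$) gives
$$
\bigl|e^{-mt} g^+(u_k)(t)\bigr| \le C(L)\, \|e^{-mt} f_k\|_{L^2(Q)} \longrightarrow 0
$$
uniformly on $[0, 3L]$. Consequently the coefficient of $\F_i(u_k)$ tends to zero for $i = 1, 3$, so $\int_{Q_i} e^{-2mt}|\F_i(u_k)|^2 \to 0$ and hence $\int_{Q_i} e^{-2mt}|u_k|^2 \le 2E_i^\dagger(u_k) + o(1) \le 2e^{qL} + o(1)$. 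Combined with $E_2^\dagger(u_k) = 1$, this yields the desired bound. Since $\Delta u_k = f_k \to 0$ in $L^2_{loc}(Q)$, interior elliptic regularity yields a uniform $H^2_{loc}$ bound; along a subsequence, $u_k \rightharpoonup u_\infty$ weakly in $L^2(Q, e^{-2mt})$ and strongly in $H^1_{loc}$, with $\Delta u_\infty = 0$.

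The last step is to identify $u_\infty$ and apply Lemma \ref{3-circle.harmonic}. Standard trace estimates together with strong $H^1_{loc}$ convergence imply $g^+(u_k)(3L/2) \to g^+(u_\infty)(3L/2)$, so $g^+(u_\infty)(3L/2) = 0$; since $u_\infty$ is harmonic, $g^+(u_\infty)$ is a constant (in $\theta$) multiple of $e^{mt}$, so the $e^{mt}$-mode of $u_\infty$ vanishes identically and $\F_i(u_\infty) \equiv 0$ for all $i$. Because $Q_2 \subset\subset Q$, strong $L^2(Q_2, e^{-2mt})$ convergence gives $E_2^\dagger(u_\infty) = 1$, while lower semi-continuity of the weighted $L^2$ norm under weak convergence gives $E_1^\dagger(u_\infty) + E_3^\dagger(u_\infty) \le e^{qL}$. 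Applying Lemma \ref{3-circle.harmonic} to the harmonic $u_\infty$ (whose $e^{mt}$-mode is absent) yields $E_2^\dagger(u_\infty) < e^{-qL}\bigl(E_1^\dagger(u_\infty) + E_3^\dagger(u_\infty)\bigr) \le 1$, contradicting $E_2^\dagger(u_\infty) = 1$.

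The main obstacle is the uniform weighted $L^2$ bound on the full $Q$: the hypothesis only controls $u_k$ modulo the harmonic functions $\F_i(u_k)$ on each subcylinder, and closing this loop requires the ODE for $g^+$ to propagate smallness of $f_k$ into smallness of each $\F_i(u_k)$, which in turn is made possible only \emph{after} the normalization $\F_2(u_k) = 0$ kills the otherwise unconstrained growing mode.
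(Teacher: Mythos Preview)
Your proof is correct and follows essentially the same route as the paper's: subtract $\F_2$, normalize, use the ODE \eqref{ODE.g} to show $\F_i(u_k)\to 0$, extract a harmonic limit with vanishing $e^{mt}$-mode, and contradict Lemma \ref{3-circle.harmonic} via weak lower semicontinuity on $Q_1\cup Q_3$ (the paper uses Fatou there). One small caveat: passing $g^+(u_k)(3L/2)\to g^+(u_\infty)(3L/2)$ requires $C^1$ convergence of $\varphi_k$ near $t=3L/2$, which follows from your $H^2_{loc}$ bound via the 1D embedding $W^{2,2}\hookrightarrow C^{1}$ (as the paper makes explicit), not from strong $H^1_{loc}$ convergence alone.
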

\begin{proof}
Assume the result  is false. Then there exists $u_k$, with $\Delta u_k=f_k$, such that $E_2^{{\dagger}}(u_k)>0$, 
$$
\epsilon_k=\frac{\|e^{-mt}f_k\|_{L^2(Q)}^2}{E_2^{{\dagger}}(u_k)}\rightarrow 0
$$
and
$$
e^{-qL}(E_1^{{\dagger}}(u_k)+E_3^{{\dagger}}(u_k))\leq E_2^{{\dagger}}(u_k)\neq 0.
$$

Define $v_k=u_k/\sqrt{E_2^{{\dagger}}(u_k)}$. By \eqref{ODE.g}, we have
$$
|\gamma_{i+1}(v_k)-\gamma_i(v_k)|\rightarrow 0 \quad \text{for} \quad i=1,2
$$
and
\begin{eqnarray*}
\|e^{-mt}(v_k-\F_2(v_k))\|_{L^2(Q)}&\leq&C+
\frac{\|e^{-mt}(\F_2(u_k)-\F_1(u_k))\|_{L^2(Q_1)}+\|e^{-mt}(\F_3(u_k)-\F_2(u_k))\|_{L^2(Q_3)}}{\|e^{-mt}(u_k-\F_2(u_k))\|_{L^2(Q_2)}}\\
&\leq& C(1+|\gamma_2(v_k)-\gamma_1(v_k)|+|\gamma_3(v_k)-\gamma_2(v_k)|)\\
&\leq& C'.
\end{eqnarray*}
Since
$$
\|\Delta (v_k-\F_2(v_k))\|_{L^2(Q)}=\|\Delta v_k\|_{L^2(Q)}\leq\frac{\|f\|_{L^2(Q)}}{\sqrt{E_2^{{\dagger}}(u_k)}}\rightarrow 0,
$$
$w_k=v_k-\F_2(v_k)$ is uniformly bounded in $W^{2,2}(K)$ for any $K\subset Q$ and hence  converges to a harmonic function $w$ in $W^{1,p}_{loc}(Q)$ with
$$
\|e^{-mt}w\|_{L^2(Q_2)}=1.
$$
Noting that $\F_2$ is a linear operator and  $ \F_2\circ \F_2=\F_2$, we know $\F_2(w_k)=0$. Moreover, for any $0<a<b<3L$, by taking $K=[a,b] \times S^1$, we obtain
$$\|\varphi(w_k)\|_{W^{2,2}([a,b])}\le C.$$
Since $W^{2,2}([a,b])$ is compactly embedded in $C^{1,\alpha}([a,b])$ for $\alpha<\frac{1}{2}$, we know $g^+(w_k)=m\varphi(w_k)-\varphi'(w_k)$ converges to $g^+(w)$ in $C^{0}([a,b])$ after passing to a subsequence. As a result, $ \F_2(w)=\lim_{k\to \infty}\F_2(w_k)=0$. 
Being a harmonic function, $w$ has an expansion similar to the one given in Lemma \ref{3-circle.harmonic}. $\F_2(w)=0$ implies that
$$ b_m(w)=b_m'(w)=0.$$
By Lemma \ref{3-circle.harmonic} and the fact that $w$ is not identically zero, we know  
$$
\int_{Q_2}w^2e^{-2mt} dtd\theta<e^{-qL}\left(\int_{Q_1}w^2e^{-2mt} dtd\theta+\int_{Q_3}w^2e^{-2mt} dtd\theta\right).
$$

However, by taking the limit and using the Fatou's lemma, we have
\begin{align*}
& e^{-qL}(\int_{Q_1}e^{-2mt}|w|^2 dtd\theta+\int_{Q_3}e^{-2mt}|w|^2 dtd\theta)\\
\leq&\lim_{k\rightarrow+\infty}e^{-qL}\left(\int_{[0,L]\times S^1}e^{-2mt}|w_k|^2 dtd\theta+\int_{[L,3L]\times S^1}e^{-2mt}|w_k|^2 dtd\theta\right)\\
\leq&\lim_{k\rightarrow+\infty}\int_{Q_2}e^{-2mt}|w_k|^2 dtd\theta\\
=&\int_{Q_2}e^{-2mt}|w|^2 dtd\theta.
\end{align*}
This is a contradiction and the proof is done.
\end{proof}

As a corollary, we derive the following decay properties on a long cylinder:

\begin{cor}\label{decay.linear1}
There exists $L_1$ depending on $q\in (0,2)$, and $\epsilon_0(q,L)>0$ depending on $q$ and a choice of $L>L_1$ such that the following holds.
Assume that $u$ is a $C^2$ map from $[0,lL]\times S^1$ to $\R^n$ satisfying \eqref{linear equation} and 
\begin{align}\label{comparison of harmoniousness}
\frac{1}{2}\|e^{-mt}f\|_{L^2(Q_{i+1})}\leq\|e^{-mt}f\|_{L^2(Q_{i})}\leq 2\|e^{-mt}f\|_{L^2(Q_{i+1})},\s \forall i=1,\cdots,l-1,
\end{align}
then there exist integers $1\le a,b\le l$ with $a-1\le b$ such that for $q'$ in \eqref{eqn:qprime},
\begin{itemize}
\item[1)]  $e^{-q'iL}\int_{Q_i}e^{-2mt}|u-\F_i|^2 dtd\theta$ is increasing on $[b+1,l]$, and 
$$
\int_{Q_i}e^{-2mt}|f|^2 dtd\theta\leq\frac{\epsilon_0}{10}\int_{Q_i}e^{-2mt}|u-\F_i|^2 dtd\theta,\quad \forall i\ge b+1.
$$

\item[2)] $e^{q'iL}\int_{Q_i}e^{-2mt}|u-\F_i|^2 dtd\theta$ is decreasing on $[1,a-1]$, and
$$
\int_{Q_i}e^{-2mt}|f|^2 dtd\theta\leq\frac{\epsilon_0}{10}\int_{Q_i}e^{-2mt}|u-\F_i|^2 dtd\theta, \quad \forall i\le a-1.
$$
\item[3)]  for any
$a\le i\le b$, we have
$$
\int_{Q_i}e^{-2mt}|f|^2 dtd\theta>\frac{\epsilon_0}{10}\int_{Q_i}e^{-2mt}|u-\F_i|^2 dtd\theta.
$$
\end{itemize}
\end{cor}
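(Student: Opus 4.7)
The plan is to apply Lemma \ref{3-circle for relative harmonic function} at every ``harmonic-like'' index and combine the propagation given by Lemma \ref{basic observation} with assumption \eqref{comparison of harmoniousness} to force a simple monotonic structure on the sequence $\Phi_i := \int_{Q_i} e^{-2mt}|u-\F_i|^2 dtd\theta$. Writing $\phi_i := \int_{Q_i} e^{-2mt}|f|^2 dtd\theta$ and $\mathcal{H} := \{i\in[1,l] : \phi_i \leq (\epsilon_0/10)\Phi_i\}$, the idea is to split $[1,l]$ based on membership in $\mathcal{H}$ and the monotonic behavior of $\Phi_i$. I would first choose $\epsilon_0(q,L)$ smaller than $\epsilon_0^*(q,L)/9$, where $\epsilon_0^*$ denotes the constant from Lemma \ref{3-circle for relative harmonic function}. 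By \eqref{comparison of harmoniousness} one has $\|e^{-mt}f\|^2_{L^2(Q_{i-1}\cup Q_i\cup Q_{i+1})} \leq 9\phi_i$, so for every $i\in\mathcal{H}\cap[2,l-1]$ the hypothesis of Lemma \ref{3-circle for relative harmonic function} holds and we obtain the three-circle inequality $\Phi_i \leq e^{-qL}(\Phi_{i-1}+\Phi_{i+1})$. I would additionally choose $L_1(q)$ so that $e^{q'L_1} \geq 4$, where $q'$ is as in \eqref{eqn:qprime}.

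The key step is a \emph{boundary argument}: if $i \in \mathcal{H}$ with $2 \leq i \leq l-1$ and $i+1 \notin \mathcal{H}$, then the alternative in Lemma \ref{basic observation}(1) must take the form $\Phi_i \leq e^{-q'L}\Phi_{i-1}$. Indeed, if instead $\Phi_i \leq e^{-q'L}\Phi_{i+1}$, then combining $\phi_{i+1} \leq 4\phi_i \leq (2\epsilon_0/5)\Phi_i$ from \eqref{comparison of harmoniousness} with $\Phi_i \leq e^{-q'L}\Phi_{i+1}$ yields $\phi_{i+1} \leq (2\epsilon_0/5)e^{-q'L}\Phi_{i+1} \leq (\epsilon_0/10)\Phi_{i+1}$ once $e^{q'L} \geq 4$, which forces $i+1 \in \mathcal{H}$ and contradicts the hypothesis. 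A symmetric statement holds when $i-1 \notin \mathcal{H}$.

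Combining this boundary argument with the one-directional propagation given by parts (2) and (3) of Lemma \ref{basic observation} rules out any strictly interior maximal $\mathcal{H}$-component: at its left boundary the direction would be forced to the ``right'' option $\Phi_i \leq e^{-q'L}\Phi_{i+1}$, at its right boundary to the ``left'' option, and these two propagate throughout the whole component in incompatible ways. Therefore $\mathcal{H}$ is of the simple form $[1,i_1]\cup[i_2,l]$, where either piece may be empty and the two may merge when $i_1 \geq i_2$. On a left piece with $i_1 < l$, the boundary argument gives the ``left'' option at $i_1$, and backward propagation via Lemma \ref{basic observation}(3) gives $\Phi_{i+1} \leq e^{-q'L}\Phi_i$ for every $i \in [1, i_1-1]$; the right piece is symmetric. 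When $\mathcal{H} = [1,l]$ the propagation alone yields a unique switch point $j^*$ separating a decreasing prefix from an increasing suffix.

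Finally, I would select $(a,b)$ directly from this structure: $a=i_1+1$, $b=i_2-1$ in the generic separated case; $a=j^*$, $b=j^*-1$ in the merged case with an interior switch; and boundary conventions such as $(a,b)=(l,l-1)$, $(2,1)$, or $(1,l)$ in the degenerate purely monotonic or purely non-harmonic cases so as to respect $1\leq a,b\leq l$ and $a-1\leq b$. Conclusions 1), 2), 3) then follow immediately from the construction. I expect the main obstacle to be the boundary argument of the second paragraph together with the structural analysis of the third: it is precisely the hypothesis \eqref{comparison of harmoniousness}, made effective by the assumption $L\geq L_1$, that rules out complicated multi-component behavior of $\mathcal{H}$ and makes the three-piece partition available.
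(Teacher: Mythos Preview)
Your proposal is correct and follows essentially the same approach as the paper. Both arguments hinge on the same propagation step (your ``boundary argument'', the paper's Case 1/Case 2): once $i\in\mathcal H$ and the three-circle inequality yields $\Phi_i\le e^{-q'L}\Phi_{i+1}$, assumption \eqref{comparison of harmoniousness} forces $i+1\in\mathcal H$ as well, and one iterates to the boundary. The paper phrases this positively---starting from any $k\in\mathcal H$ the monotonicity runs all the way to one end of $[1,l]$, so the complement $I=\mathcal H^c$ is an interval---whereas you phrase it contrapositively by ruling out strictly interior components of $\mathcal H$; these are logically equivalent and lead to the same trichotomy and the same choice of $(a,b)$.
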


\begin{proof}
Fix $L_1>L_0$ (in Lemma \ref{3-circle.harmonic}) such that 
\[
e^{-q' L}<\frac{1}{5} \qquad \text{for}\quad L>L_1.
\]
Let $\epsilon_0=\epsilon_0(q,L)$ be the constant in Lemma \ref{3-circle for relative harmonic function} . 

If
\[
\|e^{-mt}f\|^2_{L^2(Q_k)}>\frac{\epsilon_0}{10}E_k^{{\dagger}}(u)
\]
for any $1<k<l$, we set
\begin{align*}
a=\left\{\begin{array}{cc}
 1   & \text{if} \s \|e^{-mt}f\|^2_{L^2(Q_1)}>\frac{\epsilon_0}{10}E_1^{{\dagger}}(u) \\
 2   & \text{if} \s \|e^{-mt}f\|^2_{L^2(Q_1)}\le \frac{\epsilon_0}{10}E_1^{{\dagger}}(u)
\end{array}\right.
\quad \quad \text{ and } \quad \quad 
b=\left\{\begin{array}{cc}
 l   & \text{if} \s \|e^{-mt}f\|^2_{L^2(Q_l)}>\frac{\epsilon_0}{10}E_l^{{\dagger}}(u) \\
 l-1   & \text{if} \s \|e^{-mt}f\|^2_{L^2(Q_l)}\le \frac{\epsilon_0}{10}E_l^{{\dagger}}(u).
\end{array}\right.
\end{align*}
One verifies directly that the statement 1)-3) hold.

In what follows, we assume that there exists some $1<k<l$ such that $\|e^{-mt}f\|^2_{L^2(Q_k)}\leq\frac{\epsilon_0}{10}E_k^{{\dagger}}(u)$, 
which implies (using \eqref{comparison of harmoniousness}) 
\begin{equation}
    \label{eqn:ekdagger}
\int_{Q_{k-1}\cup Q_k\cup Q_{k+1}}e^{-2mt}\abs{f}^2 dtd\theta\leq\frac{\epsilon_0}{2} E_k^{{\dagger}}.
\end{equation}
Lemma \ref{3-circle for relative harmonic function} then shows that $E_k^{{\dagger}}(u)$ satisfies  \eqref{eqn:qphi}  on $Q_{k-1}\cup Q_{k} \cup Q_{k+1}$ for $q$ and $L$. By Lemma \ref{basic observation}, there are two cases. 

{\bf Case 1:} $E_k^{{\dagger}}\leq e^{-q'L}E_{k+1}^{{\dagger}}$. In this case, by \eqref{eqn:ekdagger} and \eqref{comparison of harmoniousness}, we have
$$
\int_{Q_{k+1}}e^{-2mt}|f|^2 dtd\theta\leq  e^{-q'L}\epsilon_0/2E_{k+1}^{{\dagger}}\le \epsilon_0/10E_{k+1}^{{\dagger}},
$$
which allows us to use Lemma \ref{3-circle for relative harmonic function} again to find that $E_k^{{\dagger}}$
satisfies the \eqref{eqn:qphi} condition on $Q_k\cup Q_{k+1}\cup Q_{k+2}$. By repeating the above argument, we find that 1) holds for any integer $i\in  [k,l]$.

{\bf Case 2:} $E_k^{{\dagger}}\leq e^{-q'L}E_{k-1}^{{\dagger}}$. Similar to the proof of Case 1, but in a different direction, we find that 2) holds for any $i\in [1, k]$.

The above discussion shows that the set 
$$I:=\{1\le i\le l| \int_{Q_i}e^{-2mt}|f|^2 dtd\theta>\frac{\epsilon_0}{10}E_i^{\dagger}\}$$
equals to the intersection of $\{1,2,\cdots ,l\}$ with an interval. If $I$ is not empty, then $I=I\cap [i_0,i_1]$ for two integers $1\le i_0\le i_1\le l$.  We set $a=i_0$ and $b=i_1$. 
 If $I=\emptyset$, then there exists $1\le i_0<l$  such that  $e^{q'iL}\int_{Q_i}e^{-2mt}|u-\F_i|^2 dtd\theta$ is decreasing on $[1,i_0]$ and $e^{-q'iL}\int_{Q_i}e^{-2mt}|u-\F_i|^2 dtd\theta$ is increasing on $[i_0+1,l]$. We set $a=i_0+1$ and $b=i_0$. It is easy to check by the definition of $a,b$ that 1)-3) hold. Hence, the proof of the lemma is done.
\end{proof}

The idea behind Lemma \ref{decay.linear1} is as follows. The key assumption in Lemma \ref{3-circle for relative harmonic function} is \eqref{relatively almost harmonic}. The subset of $i$ for which \eqref{relatively almost harmonic} fails is an interval (see the statement of 3)). To see this, we start from some $i$ for which \eqref{relatively almost harmonic} holds, meaning the obstruction to the three circle, $\norm{f e^{-mt}}_{L^2(Q_i)}$ is small compared with $\norm{e^{-mt} (u-\F_i)}_{L^2(Q_i)}$. Lemma \ref{relatively almost harmonic} implies that $\norm{e^{-mt} (u-\F_i)}_{L^2(Q_i)}$ grows exponentially in a nearby segment, say $Q_{i+1}$. Thanks to \eqref{comparison of harmoniousness}, the ratio between $\norm{f e^{-mt}}_{L^2(Q_{i+1})}$ and $\norm{e^{-mt} (u-\F_i)}_{L^2(Q_{i+1})}$ gets smaller. Hence, we can use Lemma \ref{3-circle for relative harmonic function} again for $Q_{i+1}$ until the end of the interval.

Using the same method, we can also prove a stronger version of Corollary \ref{decay.linear1}. The proofs are similar to the previous one and are omitted.

\begin{cor}\label{decay.linear3}For any $q\in (0,2), L>L_1(q)$,  there exists $\epsilon_0=\epsilon_0(q, L)>0$ such that the following holds. For  $u\in C^2([0,lL]\times S^1,\R^n)$ satisfying \eqref{linear equation}, we assume that
 there exists $\Lambda_i\geq 0$, with
$$
\int_{Q_i}e^{-2mt}|f|^2 dtd\theta\leq\Lambda_i,\s \frac{1}{2}\Lambda_{i+1}\leq\Lambda_i\leq 2\Lambda_{i+1},\s \forall i.
$$
Let $q'$ be given in \eqref{eqn:qprime}. Then there exists integers $0<a,b<l$ with $a-1\le b$, such that
\begin{itemize}
\item[1)] when $i\ge b+1$, $\Lambda_i\leq\frac{\epsilon_0}{10}\int_{Q_i}e^{-2mt}|u-\F_i|^2 dtd\theta$ and $e^{-q'iL}\int_{Q_i}e^{-2mt}|u-\F_i|^2 dtd\theta$ is increasing on $[b+1,l]$.
\item[2)] when $i\le a-1$, 
$\Lambda_i\leq\frac{\epsilon_0}{10}\int_{Q_i}e^{-2mt}|u-\F_i|^2 dtd\theta$ and $e^{q'iL}\int_{Q_i}e^{-2mt}|u-\F_i|^2 dtd\theta$ is decreasing $[1,a-1]$.
\item[3)]  for any
$a\le i\le b$, we have
$$
\Lambda_i>\frac{\epsilon_0}{10}\int_{Q_i}e^{-2mt}|u-\F_i|^2 dtd\theta.
$$
\item[4)] In particular,
\begin{align*}
\sum_{i=1}^{a-1} \Lambda_i+\sum_{i={b+1}}^l \Lambda_i\le \frac{\epsilon_0}{5}\big(\int_{Q_1}e^{-2mt}|u-\F_1|^2 dtd\theta+\int_{Q_l}e^{-2mt}|u-\F_l|^2 dtd\theta).
\end{align*}
\end{itemize}
\end{cor}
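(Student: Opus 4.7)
The plan is to imitate the proof of Corollary \ref{decay.linear1} verbatim, with $\Lambda_i$ playing the role of $\|e^{-mt}f\|_{L^2(Q_i)}^2$ and the doubling condition $\tfrac12\Lambda_{i+1}\le \Lambda_i\le 2\Lambda_{i+1}$ replacing \eqref{comparison of harmoniousness}. First I would fix $L_1>L_0(m,q)$ so that $e^{-q'L}<1/5$ for every $L>L_1$, and take $\epsilon_0=\epsilon_0(q,L)$ to be the constant from Lemma \ref{3-circle for relative harmonic function}.

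The key step is to show that the set
$$
I=\set{1\le i\le l:\ \Lambda_i>\tfrac{\epsilon_0}{10}E_i^{\dagger}(u)}
$$
is (the intersection of $\{1,\dots,l\}$ with) an interval. If $i\notin I$ with $1<i<l$, the doubling hypothesis gives
$$
\int_{Q_{i-1}\cup Q_i\cup Q_{i+1}}e^{-2mt}|f|^2\,dtd\theta\le \Lambda_{i-1}+\Lambda_i+\Lambda_{i+1}\le 5\Lambda_i\le \tfrac{\epsilon_0}{2}E_i^{\dagger}(u),
$$
so Lemma \ref{3-circle for relative harmonic function} applies and produces the 3-circle inequality for $E_i^{\dagger}(u)$ on $Q_{i-1}\cup Q_i\cup Q_{i+1}$. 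Lemma \ref{basic observation} then forces either $E_i^{\dagger}\le e^{-q'L}E_{i+1}^{\dagger}$ or $E_i^{\dagger}\le e^{-q'L}E_{i-1}^{\dagger}$. In the first case, the doubling of $\Lambda$ together with $e^{-q'L}<1/5$ yields
$$
\Lambda_{i+1}\le 2\Lambda_i\le \tfrac{\epsilon_0}{5}E_i^{\dagger}\le \tfrac{\epsilon_0}{5}e^{-q'L}E_{i+1}^{\dagger}\le \tfrac{\epsilon_0}{10}E_{i+1}^{\dagger},
$$
so $i+1\notin I$ and the same argument iterates all the way to $l$; the second case is symmetric and propagates down to $1$. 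Hence $I$ is an interval.

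Now I would set $a,b$ to be the endpoints of $I$ when $I\neq\emptyset$, and make the same boundary adjustments used in the proof of Corollary \ref{decay.linear1} when $I=\emptyset$ or $I$ meets $\{1,l\}$. Statements 1)--3) are then immediate from the one-sided propagation above, which also gives the stated monotonicity of $e^{\mp q'iL}E_i^{\dagger}(u)$ on $[b+1,l]$ and $[1,a-1]$. For 4), the monotonicity on $[b+1,l]$ gives $E_i^{\dagger}\le e^{-q'(l-i)L}E_l^{\dagger}$, so
$$
\sum_{i=b+1}^l\Lambda_i\le \tfrac{\epsilon_0}{10}E_l^{\dagger}\sum_{j\ge 0}e^{-q'jL}\le \tfrac{\epsilon_0}{10}\cdot\tfrac{1}{1-e^{-q'L}}E_l^{\dagger}\le \tfrac{\epsilon_0}{8}E_l^{\dagger},
$$
and a symmetric bound holds for $\sum_{i\le a-1}\Lambda_i$; adding the two yields 4). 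I expect the only real obstacle to be the bookkeeping at the endpoints to choose $a,b$ correctly in the degenerate cases (empty $I$, or $I$ touching $\{1,l\}$), which is precisely the part that was already handled explicitly in Corollary \ref{decay.linear1}.
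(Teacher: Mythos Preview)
Your proposal is correct and is exactly the approach the paper intends: it explicitly says the proof of this corollary is ``similar to the previous one and [is] omitted,'' so carrying over the argument of Corollary~\ref{decay.linear1} with $\Lambda_i$ in place of $\|e^{-mt}f\|_{L^2(Q_i)}^2$ and the doubling hypothesis in place of \eqref{comparison of harmoniousness} is precisely what is required. Your verification that $5\Lambda_i\le \tfrac{\epsilon_0}{2}E_i^\dagger$ triggers Lemma~\ref{3-circle for relative harmonic function}, the propagation step via Lemma~\ref{basic observation}, and the geometric sum for part~4) are all correct as written.
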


By making the variable substitution $t\rightarrow -t$, we arrive at the following conclusion:

\begin{cor}\label{decay.linear2}
For any $q\in (0,2), L>L_1(q)$,  there exists $\epsilon_0=\epsilon_0(q, L)>0$ such that the following holds. For  $u\in C^2([0,lL]\times S^1,\R^n)$ satisfying \eqref{linear equation},  we assume  that
there exists $\Lambda_i\geq 0$, with
$$
\int_{Q_i}e^{2mt}|f|^2 dtd\theta\leq\Lambda_i,\s \frac{1}{2}\Lambda_{i+1}\leq\Lambda_i\leq 2\Lambda_{i+1},\s \forall i.
$$
Let $q'$ be given in \eqref{eqn:qprime}. Then there exists $0<a,b<l$ with $a-1\le b$, such that
\begin{itemize}
\item[1)] when $i\ge b+1$, $\Lambda_i \leq\frac{\epsilon_0}{10}\int_{Q_i}e^{2mt}|u-\G_i|^2 dtd\theta$ and  $e^{-q'iL}\int_{Q_i}e^{2mt}|u-\G_i|^2 dtd\theta$ is increasing on $[b+1,l]$.
\item[2)] when $i\le a-1$, $\Lambda_i \leq\frac{\epsilon_0}{10}\int_{Q_i}e^{2mt}|u-\G_i|^2 dtd\theta$  and$e^{q'iL}\int_{Q_i}e^{2mt}|u-\G_i|^2 dtd\theta$ is decreasing on $[1,a-1]$ .
\item[3)]  for any
$a\le i\le b$, we have
$$
\Lambda_i>\frac{\epsilon_0}{10}\int_{Q_i}e^{2mt}|u-\G_i|^2 dtd\theta.
$$
\item[4)] In particular,
\begin{align*}
\sum_{i=1}^{a-1} \Lambda_i+\sum_{i=b+1}^l \Lambda_i\le \frac{\epsilon_0}{5}\big(\int_{Q_1}e^{2mt}|u-\G_1|^2 dtd\theta+\int_{Q_l}e^{2mt}|u-\G_l|^2 dtd\theta).
\end{align*}
\end{itemize}
\end{cor}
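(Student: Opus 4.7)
The plan is to reduce Corollary \ref{decay.linear2} directly to Corollary \ref{decay.linear3} via the reflection $t\mapsto lL-t$, which is the substitution hinted at by the phrase ``$t\rightarrow -t$'' preceding the statement. This involution interchanges the weights $e^{2mt}$ and $e^{-2mt}$ (up to the harmless common factor $e^{-2mlL}$) and swaps the roles of the projections $\F_i$ and $\G_i$.

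First, I define $\tilde u(t,\theta):=u(lL-t,\theta)$ on $[0,lL]\times S^1$. The Laplacian being invariant under $t\mapsto -t$ gives $\Delta \tilde u=\tilde f$ with $\tilde f(t,\theta):=f(lL-t,\theta)$. The substitution $s=lL-t$ sends $Q_j$ to $Q_{l-j+1}$, yielding
$$\int_{Q_j}e^{-2mt}|\tilde f|^2 dt d\theta = e^{-2mlL}\int_{Q_{l-j+1}}e^{2ms}|f|^2 ds d\theta.$$
Setting $\tilde \Lambda_j:=e^{-2mlL}\Lambda_{l-j+1}$, the hypotheses of Corollary \ref{decay.linear3} are then verified for $\tilde u$ and $\tilde \Lambda_j$, since the monotonicity $\tfrac12\tilde\Lambda_{j+1}\le \tilde\Lambda_j\le 2\tilde\Lambda_{j+1}$ is inherited from that of $\Lambda_i$.

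Next, I verify that $\F_j$ applied to $\tilde u$ corresponds to $\G_i$ applied to $u$, where $i:=l-j+1$. From \eqref{Fourier1}--\eqref{Gi}, $\varphi(\tilde u)(t)=\varphi(u)(lL-t)$, so $g^+(\tilde u)(t)=g^-(u)(lL-t)$. Using the identity $(2i-1)L/2=lL-(2j-1)L/2$ at $i=l-j+1$, a short computation gives
$$\F_j(\tilde u)(t,\theta)=\G_i(u)(lL-t,\theta),$$
and hence
$$\int_{Q_j}e^{-2mt}|\tilde u-\F_j(\tilde u)|^2 dt d\theta = e^{-2mlL}\int_{Q_i}e^{2ms}|u-\G_i(u)|^2 ds d\theta.$$

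Applying Corollary \ref{decay.linear3} to $\tilde u$ with $\tilde \Lambda_j$ produces indices $0<\tilde a,\tilde b<l$ with $\tilde a-1\le \tilde b$. I then set $a:=l-\tilde b+1$ and $b:=l-\tilde a+1$. Items 1)--4) of Corollary \ref{decay.linear2} now follow term by term from those of Corollary \ref{decay.linear3}: under $j\leftrightarrow l-j+1$, the range ``$j\ge \tilde b+1$'' becomes ``$i\le a-1$'', the phrase ``increasing in $j$'' becomes ``decreasing in $i$'', the factor $e^{\pm q'jL}$ becomes $e^{\mp q'iL}$ up to a positive constant that does not affect monotonicity, and the common $e^{-2mlL}$ cancels from both sides of every inequality. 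The summation bound in item 4) similarly transfers since the sum over $j\in[1,\tilde a-1]\cup[\tilde b+1,l]$ becomes the sum over $i\in[1,a-1]\cup[b+1,l]$. The only nontrivial point is the identification $\F_j(\tilde u)=\G_{l-j+1}(u)\circ(lL-\cdot)$, which is a matter of carefully matching the shift parameters in the definitions \eqref{Fi}--\eqref{Gi}; everything else is a formal symmetry argument, so I anticipate no genuine obstacle.
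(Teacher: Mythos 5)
Your proposal is correct and is exactly the paper's intended argument: the paper proves Corollary \ref{decay.linear2} only by the one-line remark that it follows from Corollary \ref{decay.linear3} ``by making the variable substitution $t\rightarrow -t$,'' and your reflection $t\mapsto lL-t$ together with the verifications $\Delta\tilde u=\tilde f$, $g^+(\tilde u)(t)=g^-(u)(lL-t)$, $\F_j(\tilde u)=\G_{l-j+1}(u)\circ(lL-\cdot)$, and the index swap $a=l-\tilde b+1$, $b=l-\tilde a+1$ is precisely the detailed form of that substitution. The only (cosmetic) caveat is that the stated range $0<a,b<l$ is not exactly preserved under the index reflection (e.g.\ $\tilde b=1$ gives $a=l$), but this imprecision is already present in the paper's own statements of these corollaries and does not affect items 1)--4).
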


\subsection{3-circle lemmas for nonlinear equations}
\label{sub:nonlinear}
In this subsection, we assume that $u\in C^2(Q,\R^n)$ solves the equation
\begin{equation}\label{equation.u}
\Delta u=\phi(x,u,\nabla u)
\end{equation}
where
\begin{equation}\label{equation.u.phi}
\abs{\phi(x,u,\nabla u)}\leq \omega\cdot (|u|+|\nabla u|).
\end{equation}
for some nonnegative function $\omega$ defined on $Q$.

Obviously, when $\omega$ is small, $u$ is almost harmonic. In order to obtain 3-circle lemma for quantities like $E_i(u)$, we need to assume that $\gamma_i$ is small compared with $\sqrt{E_i(u)}$, because the assertion fails even for harmonic function $e^{m t}\cos \theta$. For that purpose, we define
$$
\lambda_i(u)=\frac{\gamma_i}{\sqrt{E_i(u)}}.
$$
Since $E_i(u)=0$ implies that $\varphi_u(t)=0$ for $t\in ((i-1)L,iL)$, $g^+(u)=\varphi'_u+m\varphi_u=0$ on $((i-1)L,iL)$ and hence $\gamma_i(u)=0$, we adopt the convention 
\begin{align}\label{convention}
\lambda_i(u)=0 \qquad \text{when} \quad E_i(u)=0.
\end{align}
As a result, when $\lambda_i>0$, we have $E_i>0$. This is used in the proof of Lemma \ref{gamma} and Theorem \ref{decay}. 

First, we prove that $E_i(u)$ satisfies the 3-circle lemma (see Definition \ref{def:3circle}) when $\|\omega\|_{L^\infty}$  is small and one of
$\lambda_i$ is sufficiently small.

\begin{thm}\label{3-circle.nonlinear1} For any $q\in (0,2)$ and $L>L_0(q)$, 
there exist $\epsilon_1$ and $\delta_0$ both depending on $q$ and $L$  such that if
$$
\|\omega\|_{L^\infty(Q)}<\delta_0,\s
\min\{\lambda_1,\lambda_2,\lambda_3\}\leq \epsilon_1,
$$
then $E_i$ satisfies the 3-circle lemma  on $Q$.
\end{thm}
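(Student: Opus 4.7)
\medskip

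\textbf{Plan for Theorem \ref{3-circle.nonlinear1}.} I will prove this by a compactness/contradiction argument, in the spirit of Lemma \ref{3-circle for relative harmonic function}. The target conclusion $E_2\le e^{-qL}(E_1+E_3)$ compares quantities that are scale invariant under $u\mapsto cu$ and stable under $C^1$ perturbation, which makes such a scheme natural.

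Suppose the claim fails. Then for each $\delta_0=\epsilon_1=1/k$ there is a solution $u_k$ of \eqref{equation.u}--\eqref{equation.u.phi} with $\|\omega_k\|_{L^\infty(Q)}\to 0$ and $\min_{j\in\{1,2,3\}}\lambda_j(u_k)\to 0$, but violating the three-circle inequality, i.e.
\[
E_2(u_k)>e^{-qL}\bigl(E_1(u_k)+E_3(u_k)\bigr).
\]
Set $v_k=u_k/\sqrt{E_2(u_k)}$. Then $E_2(v_k)=1$ and $E_1(v_k)+E_3(v_k)<e^{qL}$, so $\{v_k\}$ is uniformly bounded in $L^2(Q)$. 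Moreover, $|\Delta v_k|\le\omega_k(|v_k|+|\nabla v_k|)$ with $\omega_k\to 0$ in $L^\infty$.

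The next step is a bootstrap using interior elliptic estimates on a nested family of sub-cylinders $K''\Subset K'\Subset K\Subset Q$: from $\|v_k\|_{W^{2,2}(K')}\le C(\|v_k\|_{L^2(K)}+\|\Delta v_k\|_{L^2(K)})$ and the assumed bound $\|\omega_k\|_\infty$ arbitrarily small, one absorbs the $\|\nabla v_k\|_{L^2}$ term coming from \eqref{equation.u.phi} and iterates to get $W^{2,p}_{loc}$ bounds for every $p$, hence $C^{1,\alpha}_{loc}$ convergence along a subsequence to some $v\in C^{1,\alpha}_{loc}(Q)$. Since $\Delta v_k\to 0$ in $L^p_{loc}$, the limit $v$ is harmonic in $Q$. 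By the $C^{1,\alpha}_{loc}$ convergence, $E_2(v_k)\to E_2(v)$ (because $Q_2$ is compactly contained in the interior of $Q$), giving $E_2(v)=1$; and by Fatou on $Q_1,Q_3$, $E_1(v)+E_3(v)\le \liminf_k(E_1(v_k)+E_3(v_k))\le e^{qL}$.

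Now I pass the small-$\lambda_j$ hypothesis to the limit. After extracting a subsequence, there is a fixed $j\in\{1,2,3\}$ with $\lambda_j(v_k)=\lambda_j(u_k)\to 0$ and with $E_j(v_k)$ convergent (necessarily to a finite value). Because $\varphi$ and $\varphi'$ at the fixed time $(2j-1)L/2$ are linear functionals involving integrals over a circle lying strictly inside $Q$, the $C^{1}$-convergence gives $\gamma_j(v_k)\to\gamma_j(v)$. Writing $\gamma_j(v_k)=\lambda_j(v_k)\sqrt{E_j(v_k)}\to 0$, we conclude $\gamma_j(v)=0$. For the harmonic $v$, $g^+(v)=2m(a_m,b_m)e^{mt}$, so $\gamma_j(v)=0$ forces $a_m=b_m=0$, i.e.\ the $e^{mt}\cos m\theta$ and $e^{mt}\sin m\theta$ modes in the expansion \eqref{Fourier.harmonic} vanish. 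Hence Lemma \ref{3-circle.harmonic} applies to $v$ and yields either $E_2(v)=0$ or $E_2(v)<e^{-qL}(E_1(v)+E_3(v))$. The first alternative is impossible since $E_2(v)=1$, and the second combined with $E_1(v)+E_3(v)\le e^{qL}$ gives $1<1$, a contradiction.

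The one technical obstacle is the elliptic bootstrap, since the right-hand side of \eqref{equation.u.phi} contains the gradient $\nabla u$: this is exactly where the smallness assumption $\|\omega\|_\infty<\delta_0$ is consumed (in the absorption step), and this is what determines $\delta_0=\delta_0(q,L)$. The smallness $\epsilon_1$ of $\min\lambda_j$ is used only qualitatively, through the fact that the limit $v$ inherits $\gamma_j(v)=0$; no quantitative dependence on $\epsilon_1$ enters the final estimate.
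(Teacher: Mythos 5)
Your proposal is correct and follows essentially the same route as the paper's proof: a contradiction/compactness argument with the normalization $v_k=u_k/\sqrt{E_2(u_k)}$, an elliptic bootstrap to extract a harmonic limit $v$ with $E_2(v)=1$, passage of $\gamma_j(v_k)\to 0$ to the limit to kill the $e^{mt}\cos m\theta$, $e^{mt}\sin m\theta$ modes, and then Lemma \ref{3-circle.harmonic} together with Fatou's lemma to reach the contradiction. No gaps.
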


\begin{proof}
Assume the result is false, then we can find
$u_k$ and some $i_0\in\{1,2,3\}$, with
$$
\|\omega_k\|_{L^\infty(Q)}\rightarrow 0,\s
\lambda_{i_0}(u_k)\rightarrow 0,
$$
and
\begin{equation*}
0\neq \int_{Q_2}|u_k|^2e^{-2mt}dtd\theta> e^{-q L}
\left(\int_{Q_1}|u_k|^2e^{-2mt}dtd\theta+
\int_{Q_3}|u_k|^2e^{-2mt}dtd\theta\right).
\end{equation*}
Set
$$
v_k=\frac{u_k}{\|u_ke^{-mt}\|_{L^2(Q_2)}}.
$$
We have
\begin{equation}\label{3.W2}
\int_{Q_2}|v_k|^2e^{-2mt}dtd\theta\geq e^{-q L}
\left(\int_{Q_1}|v_k|^2e^{-2mt}dtd\theta+
\int_{Q_3}|v_k|^2e^{-2mt}dtd\theta\right).
\end{equation}
and
$$
\int_{Q_2}|v_k|^2e^{-2mt}dtd\theta=1.
$$
Thus
$$
\int_Q|v_k|^2e^{-2mt}dtd\theta\leq
C.
$$
Since
$$
|\Delta  v_k|\leq \omega_k\cdot (|v_k|+|\nabla v_k|),
$$
by elliptic estimates we get
$$
\|\nabla v_k\|_{L^2(K)}\le C(K)\|v_k\|_{L^2(Q)}
$$
and
$$
\|v_k\|_{W^{2,p}(K)}\le C(K,p),
$$
for any compact subset $K$ inside $\mathring{Q}$ and $p\in (1,\infty)$.

Hence, $v_k$ converges to a harmonic function $v$ weakly in $W^{2,p}_{loc} (Q)$ and strongly in $C_{loc}^{1,\alpha}(Q)$ for any $p<\infty$ and $\alpha<1$, satisfying
$$
\int_{Q_2}|v|^2e^{-2mt}dtd\theta=1.
$$
Thus $v\neq 0$.

Moreover, since $\lambda_{i_0}(u_k)\rightarrow 0$,
$$
\gamma_{i_0}(v_k)=\lambda_{i_0}(v_k)\sqrt{E_{i_0}(v_k)}\rightarrow 0,
$$
then $\lambda_{i_0}(v)=0$. Since $v$ is harmonic, $\lambda_1(v)=\lambda_2(v)=\lambda_3(v)=0$, then $b_m(v)=b'_m(v)=0$  in the expansion of $v$ as required in Lemma \ref{3-circle.harmonic}.

However, by Fatou's lemma, we get
\begin{align*}
&e^{-q L}\left(\int_{Q_1}|v|^2e^{-2mt}dtd\theta+
\int_{Q_3}|v|^2e^{-2mt}dtd\theta\right)\\
\leq&\lim_{k\rightarrow+\infty}e^{-q L}\left(\int_{Q_1}|v_k|^2e^{-2mt}dtd\theta+
\int_{Q_3}|v_k|^2e^{-2mt}dtd\theta\right)\\
\leq&\lim_{k\rightarrow+\infty}\int_{Q_2}|v_k|^2e^{-2mt}dtd\theta=\int_{Q_2}|v|^2e^{-2mt} dtd\theta,
\end{align*}
which contradicts Lemma \ref{3-circle.harmonic} for $v$.
\end{proof}
\begin{rem}\label{maximal principle} 
If $E_1(u)=0$ (or $E_3(u)=0)$, by our convention, $\lambda_1=0$ (or $\lambda_3=0$ respectively), then the condition $\min\{\lambda_1,\lambda_2,\lambda_3\}\le \epsilon_1$ holds automatically.  So, Theorem \ref{3-circle.nonlinear1} implies that if $\|\omega\|_{L^\infty}<\delta_0$,  then $E_1(u)=0$ implies $E_2(u)\le e^{-qL} E_3(u)$. In particular, we have the following ``maximum principle": $E_1(u)=E_3(u)=0$ implies $E_2(u)=0$. 
\end{rem}

Since $\gamma_i(u)$ (in comparison with $\sqrt{E_i(u)}$) being small is important for the application of Theorem \ref{3-circle.nonlinear1}, we would like to study the change of $\gamma_i$. The idea is that when $\omega$ is small, $\gamma_i(u)$ does not change much.

\begin{lem}
\label{lem:gamma.diff}
There exists $C=C(m,L)$, such that if $\|\omega\|_{L^\infty}\le 1$, then  for $i=1$, $2$, we have
\begin{equation}\label{eqn:gamma.diff}
|\gamma_{i+1}(u)-\gamma_i(u)|\leq C\|\omega\|_{L^\infty(Q_i\cup Q_{i+1})}\sqrt{E_i(u)+E_{i+1}(u)}.
\end{equation}
\end{lem}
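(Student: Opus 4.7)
The plan is to reduce the estimate to a direct computation based on the explicit first-order ODE \eqref{ODE.g}, followed by a weighted interior gradient bound of Caccioppoli type.

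First, I apply \eqref{ODE.g} between the midpoints $s=(2i-1)L/2$ and $t=(2i+1)L/2$ (which are the centers of $Q_i$ and $Q_{i+1}$). Since $\gamma_j(u)=|e^{-m(2j-1)L/2}g^+((2j-1)L/2)|$ is the Euclidean length of a vector in $\R^2$, the reverse triangle inequality gives
$$
|\gamma_{i+1}(u)-\gamma_i(u)| \le \Big|\int_{(2i-1)L/2}^{(2i+1)L/2}\alpha(\tau)e^{-m\tau}\,d\tau\Big|.
$$
From the definition \eqref{Fourier2} of $\alpha$ and Cauchy--Schwarz on $S^1$, one has $|\alpha(\tau)|^2 \le C\int_0^{2\pi}|f(\tau,\theta)|^2 d\theta$. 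Applying Cauchy--Schwarz in $\tau$ and then the pointwise bound \eqref{equation.u.phi}, this becomes
$$
|\gamma_{i+1}(u)-\gamma_i(u)|^2 \le C(L)\,\|\omega\|_{L^\infty(Q_i\cup Q_{i+1})}^2\int_R e^{-2m\tau}(|u|^2+|\nabla u|^2)\,dt\,d\theta,
$$
where $R=[(2i-1)L/2,(2i+1)L/2]\times S^1$ sits compactly inside $Q_i\cup Q_{i+1}$, the left and right endpoints lying at distance $L/2$ from the boundary $\partial(Q_i\cup Q_{i+1})$.

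The remaining task is to absorb the gradient term and establish
$$
\int_R e^{-2m\tau}|\nabla u|^2\,dt\,d\theta \le C(m,L)(E_i(u)+E_{i+1}(u)).
$$
This is a weighted Caccioppoli estimate. I would fix a cutoff $\chi(t)$ with $\chi\equiv 1$ on $[(2i-1)L/2,(2i+1)L/2]$, supported in $((i-1)L,(i+1)L)$, and $|\chi'|\le C/L$. Dot the equation \eqref{equation.u} with $\chi^2 e^{-2mt}u$ and integrate over $Q_i\cup Q_{i+1}$; the boundary terms vanish because $\chi$ is compactly supported in $t$ and $u$ is periodic in $\theta$. After integration by parts, noting that $\partial_t(\chi^2 e^{-2mt})=(2\chi\chi'-2m\chi^2)e^{-2mt}$, one gets
$$
\int \chi^2 e^{-2mt}|\nabla u|^2 = -\int (2\chi\chi'-2m\chi^2)e^{-2mt}\,u\cdot\partial_t u - \int \chi^2 e^{-2mt}\,u\cdot\phi.
$$
Both integrals on the right are handled by Young's inequality together with the hypothesis $\|\omega\|_{L^\infty}\le 1$, which upgrades \eqref{equation.u.phi} to $|\phi|\le|u|+|\nabla u|$. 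A small positive fraction, say $\tfrac{1}{2}$, of $\int\chi^2 e^{-2mt}|\nabla u|^2$ appears on the right and is absorbed to the left, at the cost of a multiplicative constant $C(m,L)$ from the weight and the size of $\chi'$.

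The only mildly delicate point is the absorption step above; everything else is bookkeeping. Once the weighted gradient bound is in hand, combining it with the displayed Cauchy--Schwarz inequality yields $|\gamma_{i+1}-\gamma_i|\le C(m,L)\|\omega\|_{L^\infty(Q_i\cup Q_{i+1})}\sqrt{E_i(u)+E_{i+1}(u)}$, which is \eqref{eqn:gamma.diff}.
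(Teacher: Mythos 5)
Your proposal is correct and follows essentially the same route as the paper: the quantitative identity \eqref{ODE.g} for $g^+$, Cauchy--Schwarz, and then an interior $L^2$ gradient bound for \eqref{equation.u} to absorb $|\nabla u|$ into $E_i(u)+E_{i+1}(u)$. The only cosmetic difference is that you carry the weight $e^{-2mt}$ through an explicit Caccioppoli argument, whereas the paper invokes the unweighted elliptic estimate and converts weights using that $e^{-2mt}$ is comparable to a constant on $Q_i\cup Q_{i+1}$.
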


\begin{proof}
By \eqref{ODE.g}, we have
\begin{eqnarray*}
&&|\gamma_{i+1}(u)-\gamma_i(u)|\\
&\leq&C \int_{[(2i-1)/2L,(2i+1)/2L]\times S^1}\omega(|u|+|\nabla u|)e^{-mt} dtd\theta\\
&\leq& C \|\omega\|_{L^\infty([(2i-1)/2L,(2i+1)/2L]\times S^1)}
\sqrt{\int_{[(2i-1)/2L,(2i+1)/2L]\times S^1}(|u|^2+|\nabla u|^2)e^{-2mt} dtd\theta}.
\end{eqnarray*}
By elliptic estimate for \eqref{equation.u} and $\|\omega\|_{L^\infty}\le 1$,
$$
\int_{[(2i-1)/2L,(2i+1)/2L]\times S^1}|\nabla u|^2 dtd\theta\leq
C(1+\|\omega\|^2_{L^\infty})\int_{Q_i\cup Q_{i+1}}|u|^2 dtd\theta\le C \int_{Q_i\cup Q_{i+1}}|u|^2 dtd\theta,
$$
then
$$
|\gamma_{i+1}(u)-\gamma_i(u)|\leq C\|\omega\|_{L^\infty(Q_i\cup Q_{i+1})}
\sqrt{\int_{Q_i\cup Q_{i+1}}|u|^2e^{-2mt} dtd\theta}$$
\end{proof}

The next lemma bounds the ratio of $\lambda_i$ and $\lambda_{i+1}$. Note that in practise, we always have $\norm{\omega}_{L^\infty}$ small.
\begin{lem}\label{gamma}
Assume 
$$
\min\{\lambda_i,\lambda_{i+1}\}\geq \epsilon >0
$$ 
for some $\epsilon>0$ and $\norm{\omega}_{L^\infty(Q)}$ is smaller than some constant $\delta(\epsilon)$.
Then
\begin{equation}\label{decay.of.g+}
\frac{1- C\|\omega\|_{L^\infty(Q)}/\epsilon}{1+ C\|\omega\|_{L^\infty(Q)}/\epsilon}\leq \frac{\gamma_{i+1}}{\gamma_i}\leq \frac{1+ C\|\omega\|_{L^\infty(Q)}/\epsilon}{1- C\|\omega\|_{L^\infty(Q)}/\epsilon}
\end{equation}
\end{lem}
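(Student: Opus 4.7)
The plan is to bootstrap from the previous lemma (Lemma \ref{lem:gamma.diff}), which already controls the additive difference $|\gamma_{i+1}-\gamma_i|$ in terms of $\|\omega\|_{L^\infty}$ and $\sqrt{E_i+E_{i+1}}$, by using the hypothesis on $\lambda_i$ to convert the $\sqrt{E}$-factor back into a $\gamma$-factor. First, the assumption $\lambda_i,\lambda_{i+1}\ge \epsilon>0$ and the convention \eqref{convention} force $E_i,E_{i+1}>0$, so in particular $\gamma_i,\gamma_{i+1}>0$ and the ratio $\gamma_{i+1}/\gamma_i$ is well-defined. From the definition of $\lambda_i$ we obtain $\sqrt{E_i}\le \gamma_i/\epsilon$ and $\sqrt{E_{i+1}}\le \gamma_{i+1}/\epsilon$, so $\sqrt{E_i+E_{i+1}}\le \sqrt{E_i}+\sqrt{E_{i+1}}\le (\gamma_i+\gamma_{i+1})/\epsilon$.

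Combining this with \eqref{eqn:gamma.diff} yields
\[
|\gamma_{i+1}-\gamma_i|\ \le\ \frac{C\,\|\omega\|_{L^\infty(Q)}}{\epsilon}\,(\gamma_i+\gamma_{i+1}).
\]
Write $\eta:=C\|\omega\|_{L^\infty(Q)}/\epsilon$ and $r:=\gamma_{i+1}/\gamma_i>0$. Dividing the displayed inequality by $\gamma_i$ gives $|r-1|\le \eta(1+r)$. If $r\ge 1$, then $r-1\le \eta(1+r)$, which rearranges to $r\le (1+\eta)/(1-\eta)$; if $r\le 1$, then $1-r\le \eta(1+r)$, which rearranges to $r\ge (1-\eta)/(1+\eta)$. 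Putting both cases together produces exactly \eqref{decay.of.g+}.

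Finally, the constant $\delta(\epsilon)$ is chosen simply so that $\eta=C\|\omega\|_{L^\infty(Q)}/\epsilon<1$, i.e.\ $\delta(\epsilon):=\epsilon/(2C)$ works, ensuring the denominator $1-\eta$ in the ratio bound is strictly positive. The only non-routine ingredient is the use of the hypothesis $\lambda_i\ge \epsilon$ to pass from the $\sqrt{E}$-control of Lemma \ref{lem:gamma.diff} to a $\gamma$-control; everything else is an elementary one-variable inequality, so I do not anticipate a genuine obstacle.
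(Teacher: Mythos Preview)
Your proof is correct and follows essentially the same route as the paper: use Lemma \ref{lem:gamma.diff} to bound $|\gamma_{i+1}-\gamma_i|$ by $C\|\omega\|_{L^\infty}\sqrt{E_i+E_{i+1}}$, invoke $\lambda_j\ge\epsilon$ to replace $\sqrt{E_j}$ by $\gamma_j/\epsilon$, and then rearrange the resulting inequality $|\gamma_{i+1}-\gamma_i|\le \eta(\gamma_i+\gamma_{i+1})$ into the ratio bound. The paper phrases the last step as ``dividing both sides by $\gamma_i\gamma_{i+1}$'' while you divide by $\gamma_i$ and solve for $r=\gamma_{i+1}/\gamma_i$, but these are the same manipulation.
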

\begin{proof}
By the definition of $\lambda_i$, we have
\[
\sqrt{E_i(u)} \leq \frac{\gamma_i}{\lambda_i}.
\]
Hence by Lemma \ref{lem:gamma.diff} and our assumptions on $\lambda_i$ and $\lambda_{i+1}$,
$$
|\gamma_{i+1}-\gamma_i|\leq C\|\omega\|_{L^\infty(Q)}\left(\frac{\gamma_i}{\lambda_i}+\frac{\gamma_{i+1}}{\lambda_{i+1}}\right) \leq C \frac{\|\omega\|_{L^\infty(Q)}}{\epsilon} (\gamma_i+\gamma_{i+1}).
$$
Since $\lambda_i,\lambda_{i+1}>0$, by our definition of $\lambda_i$ and the convention, $\gamma_i \gamma_{i+1}\ne 0$. The proof is done by dividing both sides by $\gamma_i\gamma_{i+1}$.
\end{proof}

Lemma \ref{3-circle.nonlinear1} and Lemma \ref{gamma} would allow us to study the growth of $E_i(u)$ along the neck. In particular(see the first part of Theorem \ref{decay}), the neck will be divided into three parts. On the first and the third parts, we shall have $E_i(u)$ decreases/increases exponentially, while in the middle, we shall have a lower bound for $\lambda_i$. Our next version of 3-circle lemma is used to study the decay of $E_i^\dagger$ in this middle part. For that purpose, we need to define
$$ 
 \nu_i=\frac{E_i^{{\dagger}}(u)}{E_i^*(u)}.
$$
Here we adopt the convention that $\nu_i=0$ if  $E^{\dagger}_i(u)=E^*_i(u)=0$ and  $\nu_i=+\infty$ if $E_i^*(u)=0$ but $E^{\dagger}_i(u)>0$.

\begin{thm}\label{3-circle.nonlinear2}
Let $\epsilon_1$ be as given in Theorem \ref{3-circle.nonlinear1}.
For $q\in (0,2)$ and any $\epsilon>0$.
Assume that $u$ satisfies \eqref{equation.u} and  \eqref{equation.u.phi} and that
$$\min\{\lambda_1,\lambda_2,\lambda_3\}\geq\epsilon_1\quad \text{and}\quad
\max\{\nu_1,\nu_2,\nu_3\}>\epsilon>0.
$$
Then,
 for any $L>L_1(m,q)=\max\{L_0(m,q),\frac{2\log 2}{2-q}\}$,  there exists $\delta_1(q,L,\epsilon)>0$, such that if $\|\omega\|_{L^\infty(Q)}<\delta_1$,
then $E_i^{{\dagger}}$ and $\nu_i$ satisfies  \eqref{eqn:qphi}.
\end{thm}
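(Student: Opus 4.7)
The plan is to argue by contradiction in the spirit of the proof of Theorem~\ref{3-circle.nonlinear1}, with a normalization adapted to the new quantities $E_i^\dagger$ and $\nu_i$. If the conclusion fails, we would obtain a sequence $u_k$ satisfying \eqref{equation.u}--\eqref{equation.u.phi} with $\|\omega_k\|_{L^\infty(Q)}\to 0$, $\min_i\lambda_i(u_k)\ge\epsilon_1$, $\max_i\nu_i(u_k)\ge\epsilon$, along which at least one of
\[
E_2^\dagger(u_k)>e^{-qL}\bigl(E_1^\dagger(u_k)+E_3^\dagger(u_k)\bigr),\qquad \nu_2(u_k)>e^{-qL}\bigl(\nu_1(u_k)+\nu_3(u_k)\bigr)
\]
holds. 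A useful preliminary is that the assumption $\lambda_i\ge\epsilon_1$ already yields an a priori upper bound $\nu_i\le M_0=M_0(m,L,\epsilon_1)$: the definition of $\lambda_i$ gives $E_i\le \bigl(2m^2/(\pi L\epsilon_1^2)\bigr)E_i^*$, and combined with the triangle inequality $(E_i^\dagger)^{1/2}\le E_i^{1/2}+(E_i^*)^{1/2}$ this yields $E_i^\dagger\le M_0 E_i^*$.

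I would then rescale by $v_k:=u_k/\sqrt{E_2^*(u_k)}$, so that $E_2^*(v_k)=1$. Lemma~\ref{gamma} combined with $\|\omega_k\|_{L^\infty}\to 0$ and $\lambda_i\ge\epsilon_1$ forces $\gamma_i(v_k)/\gamma_2(v_k)\to 1$ and hence $E_i^*(v_k)\to 1$ for every $i$; combined with $\nu_i\le M_0$ this bounds $E_i^\dagger(v_k)$, and therefore $\|v_k\|_{L^2(Q,e^{-2mt})}$ uniformly. The equation $|\Delta v_k|\le\omega_k(|v_k|+|\nabla v_k|)$ with $\omega_k\to 0$ then yields $W^{2,p}_{\mathrm{loc}}(Q)$ bounds by standard elliptic theory, and after passing to a subsequence $v_k\to v$ weakly in $W^{2,p}_{\mathrm{loc}}$ and strongly in $C^{1,\alpha}_{\mathrm{loc}}$, with $v$ harmonic. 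Since $\F_i$ depends on the function only through the $C^1$ value of $g^+$ at the midpoint of $Q_i$, this convergence is enough for $\F_i(v_k)\to\F_i(v)$ and $E_i^*(v_k)\to E_i^*(v)=1$. Strong $L^2$ convergence on the interior cylinder $Q_2$ gives $E_2^\dagger(v_k)\to E_2^\dagger(v)$, while Fatou on $Q_1$ and $Q_3$ gives $E_i^\dagger(v)\le\liminf_{k\to\infty} E_i^\dagger(v_k)$ for $i=1,3$.

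The heart of the proof is that for the harmonic limit $v$, the projection $\F_i(v)$ equals the single $e^{mt}$-Fourier mode $v_m=e^{mt}(a_m\cos m\theta+b_m\sin m\theta)$ of $v$, which is independent of $i$; consequently $E_i^*(v)=1$ for all $i$, and the 3-circle property for $\nu_i(v)$ coincides with that for $E_i^\dagger(v)$. Using the Fatou direction just established, both failure hypotheses for the sequence reduce in the limit to the single inequality
\[
E_2^\dagger(v)\ge e^{-qL}\bigl(E_1^\dagger(v)+E_3^\dagger(v)\bigr).
\]
Since $v-v_m$ is harmonic on $Q$ with vanishing $e^{mt}$ Fourier coefficients, Lemma~\ref{3-circle.harmonic} applies and leaves only two possibilities: either strict 3-circle holds, contradicting the above, or $\int_{Q_2}|v-v_m|^2e^{-2mt}dtd\theta=0$. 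In the latter case, unique continuation of harmonic functions forces $v\equiv v_m$ on all of $Q$, so $\nu_i(v)=0$ for every $i$; strong convergence then gives $\nu_2(v_k)\to 0$, while the failure inequality forces $\nu_1(v_k),\nu_3(v_k)\to 0$ as well, contradicting the standing hypothesis $\max_i\nu_i(u_k)=\max_i\nu_i(v_k)\ge\epsilon>0$.

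The main point requiring care will be the direction of the Fatou inequalities on the boundary cylinders $Q_1,Q_3$ (where only weak $L^2$ convergence is available), together with the bookkeeping that collapses both failure scenarios to the same limiting inequality. The a priori bound $\nu_i\le M_0$ is essential: it keeps $v_k$ bounded in $L^2(Q,e^{-2mt})$ after the rescaling by $\sqrt{E_2^*(u_k)}$, so that the limit $v$ is a genuine harmonic function to which Lemma~\ref{3-circle.harmonic} can be applied.
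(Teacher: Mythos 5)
Your proof is correct, but it takes a genuinely different route from the paper's. The paper argues in two separate steps: it first proves the $E_i^{\dagger}$ claim by a compactness/contradiction argument normalized so that $E_2^{\dagger}(v_k)=1$ (the hypothesis $\max_i\nu_i>\epsilon$ is used there to bound $E^*_{i_0}(v_k)$ and hence, via Lemma \ref{gamma}, all of $E_i(v_k)$), and it then deduces the $\nu_i$ claim \emph{algebraically} from the $E_i^{\dagger}$ claim applied with the larger exponent $\tilde q=(q+2)/2$, using $\frac{1}{\sqrt 2}\le\gamma_2/\gamma_i\le\sqrt 2$ to compare the $E_i^*$; this is precisely where the condition $L>\frac{2\log 2}{2-q}$ is consumed, to absorb the resulting factor of $2$. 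You instead run a single compactness argument normalized by $E_2^*(u_k)$, using the a priori bound $\nu_i\le 2(1+\tfrac{2m^2}{\pi L\epsilon_1^2})$ forced by $\lambda_i\ge\epsilon_1$ (the same bound the paper uses in Part 2) of Theorem \ref{decay}, though not in this proof) to obtain the $L^2$ bounds, and you use the hypothesis $\max_i\nu_i>\epsilon$ only at the very end to exclude the degenerate limit $v\equiv v_m$ — a case the paper's normalization rules out automatically since $E_2^{\dagger}(v)=1$ there. Your bookkeeping is sound: on passing to a subsequence along which one fixed failure inequality holds, $E_i^*(v_k)\to 1$ collapses both scenarios to the same limiting inequality, and both scenarios force all $\nu_i(v_k)\to 0$ in the degenerate case. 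A pleasant by-product of your route is that the inequality \eqref{eqn:qphi} for $\nu_i$ is obtained with the sharp constant $e^{-qL}$ without invoking $L>\frac{2\log 2}{2-q}$; the price is that your argument for $\nu_i$ is again non-quantitative (pure compactness), whereas the paper's second step is an explicit two-line computation once the $E^{\dagger}$ statement is known.
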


\begin{proof}
Assume  \eqref{eqn:qphi}  is false for $E_i^{{\dagger}}$. We can find $u_k$ and $\omega_k$ satisfying \eqref{equation.u} and \eqref{equation.u.phi} such that
$$\|\omega_k\|_{L^\infty(Q)}\rightarrow 0,$$
\[
\min\set{\lambda_1(u_k),\lambda_2(u_k),\lambda_3(u_k)}\geq \epsilon_1,
\]
\begin{equation}
    \label{eqn:nui0}
\nu_{i_0}(u_k)>\epsilon, \quad \text{for some} \quad i_0=1,2,3
\end{equation}
and
\begin{equation}
    \label{eqn:dagger}
e^{-qL}(E_1^{{\dagger}}(u_k)+E_3^{{\dagger}}(u_k))< E_2^{{\dagger}}(u_k).
\end{equation}
Setting
$$
v_k=\frac{u_k}{\sqrt{E_2^{{\dagger}}(u_k)}},
$$
by \eqref{eqn:dagger}, we have
$$
\sum_{i=1}^3 E_i^{{\dagger}}(v_k)<C\quad \quad \text{ and } \quad \quad E_2^{\dagger}(v_k)=1.
$$
It follows from \eqref{eqn:nui0} that $E^*_{i_0}(v_k)$ is bounded.
By Lemma \ref{gamma},
$E_1^*(v_k)$, $E_2^*(v_k)$, $E_3^*(v_k)<C$ for large $k$. Since $\lambda_1,\lambda_2,\lambda_3\geq \epsilon_1$, we also get
$$
\sum_{i=1}^3 E_i(v_k)<C.
$$
Then $v_k$ converges to a harmonic function $v$ in $C^{1,\alpha}_{loc}(Q)$, and $\F_i(v_k)$ converges  to $\F_i(v)$ in $C^\infty(Q)$ and $E
_2^{\dagger}(v)=1$.
Since $v$ is harmonic, we have
$$
\F_1(v)=\F_2(v)=\F_3(v).
$$

Moreover, $E_2^{\dagger}(v)=1$ implies  $v-\F_2(v)\neq 0$ and we have
\begin{align*}
&e^{-qL}(\int_{Q_1}e^{-2mt}|v-\F_2(v)|^2 dtd\theta+
\int_{Q_3}e^{-2mt}|v-\F_2(v)|^2 dtd\theta)\\
=& e^{-qL}(\int_{Q_1}e^{-2mt}|v-\F_1(v)|^2 dtd\theta+
\int_{Q_3}e^{-2mt}|v-\F_3(v)|^2 dtd\theta)\\
\leq& \lim_{k\rightarrow+\infty}e^{-qL}\left(\int_{Q_1}e^{-2mt}|v_k-\F_1(v_k)|^2 dtd\theta+\int_{Q_3}e^{-2mt}|v_k-\F_3(v_k)|^2 dtd\theta\right)\\
\leq&\lim_{k\rightarrow+\infty}\int_{Q_2}e^{-2mt}|v_k-\F_2(v_k)|^2 dtd\theta\\
=&\int_{Q_2}e^{-2mt}|v-\F_2(v)|^2 dtd\theta.
\end{align*}
It contradicts Lemma \ref{3-circle.harmonic} for $v-\F_2(v)$ as long as $L>L_0(m,q)$ therein.
Hence, we have finished the proof for the claim for $E^\dagger_i$.

We now move on to show the claim for $\nu_i$. 

When $E_2^{\dagger}(u)=0$, then $\nu_2=0$ by our convention and there is nothing to prove.  When $E_2^{\dagger}(u)>0$, let $\tilde{q}=\frac{q+2}{2}$. We have just proved the existence of some $\delta'>0$ such that if $\norm{\omega}_{L^\infty(Q)}<\delta'$, then $E^\dagger_i$ satisfies the 3-circle lemma for $(\tilde{q},L)$. So, $E_1^{\dagger}+E_3^{\dagger}\ge e^{\tilde q L}E_2^{\dagger}(u)>0$.  Since $\min\{\lambda_1,\lambda_2,\lambda_3\}\ge \epsilon_1>0$, we also know $E^*_i(u)>0$ for $i=1,2,3$.    Applying Lemma \ref{gamma}, with $\epsilon=\epsilon_1$, we find $\delta''$ such that as long as $\norm{\omega}_{L^\infty}<\delta''$, we have
\[
\frac{1}{\sqrt{2}}\le\frac{\gamma_2}{\gamma_1}, \frac{\gamma_2}{\gamma_3}\le \sqrt{2}.
\]
Hence, if $\norm{\omega}_{L^\infty}<\delta_1:=\min \set{\delta',\delta''}$, then
\begin{align*}
\frac{\nu_2}{\nu_1+\nu_3}=\frac{\frac{E_2^{{\dagger}}}{E_2*}}{\frac{E_1^{{\dagger}}}{E_1^*}+\frac{E_3^{{\dagger}}}{E_3^*}}\le \frac{e^{-\tilde{q}L}}{\min\{\frac{E_2^*}{E_1^*},\frac{E_2^*}{E_3^*}\}}\le e^{-\tilde{q} L}\max\{\frac{\gamma_1^2}{\gamma_2^2},\frac{\gamma^2_3}{\gamma^2_2}\}\le 2 e^{-\tilde{q}L}\leq e^{-qL}.
\end{align*}
Here in the last line above, we used $L>L_1\geq \frac{2 \log 2}{2-q}$.
\end{proof}

Next, we prove the following decay properties on a long cylinder.

\begin{thm} \label{decay}
 Assume  $u$ is defined on $[0,lL]\times S^1$ and satisfies \eqref{equation.u} and \eqref{equation.u.phi}, where $L>L_1(m,q)$. Let $q'$ be given in \eqref{eqn:qprime} and $\epsilon_1$ be given in Theorem \ref{3-circle.nonlinear1}. Then there exists
 \[
 0\leq \mathfrak{a} < \mathfrak{b} \leq l+1
 \]
 such that
\begin{itemize}
\item[1)] there exists $\delta_2>0$, such that if $\|\omega\|_{L^\infty([0,lL]\times S^1)}<\delta_2$, then
\[
E_i\leq C e^{-i q' L} \sup_{k=1,\cdots,l}E_k \quad \text{for} \quad i\in [1,\mathfrak{a}],
\]
\[
E_i\leq C e^{-(l-i) q' L}\sup_{k=1,\cdots,l}E_k \quad \text{for} \quad i\in [\mathfrak{b},l],
\]
and
\[
\lambda_i \ge \epsilon_1 \quad \text{for} \quad i\in [\mathfrak{a}+1 , \mathfrak{b}-1] \text{ and } \lambda_i<\epsilon \text{ for } i\le [1,\mathfrak a]\cup [\mathfrak b,l].
\]

\item[2)] For any $2<s<\frac{1}{4}(\mathfrak{b}-\mathfrak{a})$, there exists
$\delta=\delta(s)$ such that if $\|\omega\|_{L^\infty([0,lL]\times S^1)}<\delta$, then
$$
\nu_i\leq 2(1+ \epsilon_1^{-2})e^{-q'(s-2)L/2},\s \forall \mathfrak{a}+s<i<\mathfrak{b}-s.
$$
\end{itemize}
\end{thm}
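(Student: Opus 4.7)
The plan is to iterate the nonlinear three-circle Theorems \ref{3-circle.nonlinear1} and \ref{3-circle.nonlinear2} along the cylinder, using Lemma \ref{basic observation} to convert the symmetric inequality \eqref{eqn:qphi} into one-sided directional decays that then propagate.

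First, I would take $\delta_2\leq\delta_0(q,L)$ so that at every index $i$ with $\lambda_i<\epsilon_1$, Theorem \ref{3-circle.nonlinear1} yields $E_i\leq e^{-qL}(E_{i-1}+E_{i+1})$ and hence Lemma \ref{basic observation}(1) gives one of the two directional inequalities. Evaluating this dichotomy at a maximizer of $\{E_k\}_k$ forces $\lambda_{i_0}\geq\epsilon_1$ for some $i_0$, so the set where $\lambda$ is large is non-empty. I would then define
\[
\mathfrak a:=\max\bigl\{i\in[0,l]:\lambda_j<\epsilon_1\text{ and }E_j\leq e^{-q'L}E_{j+1}\text{ for all }j\in[1,i]\bigr\}
\]
(with $\mathfrak a=0$ if no such index exists) and $\mathfrak b$ symmetrically from the right. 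Iterating the directional inequality via Lemma \ref{basic observation}(2) gives $E_i\leq Ce^{-(\mathfrak a-i)q'L}E_{\mathfrak a}\leq Ce^{-(\mathfrak a-i)q'L}\sup_k E_k$ on $[1,\mathfrak a]$, and symmetrically on $[\mathfrak b,l]$; combined with non-emptiness this gives $\mathfrak a<\mathfrak b$.

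The main obstacle is showing $\lambda_i\geq\epsilon_1$ throughout $[\mathfrak a+1,\mathfrak b-1]$. I would argue by contradiction: suppose $\lambda_{i^*}<\epsilon_1$ at some such $i^*$. The key observation is a transition-point argument: at any pair with $\lambda_i<\epsilon_1$ and $\lambda_{i+1}\geq\epsilon_1$, Lemma \ref{lem:gamma.diff} for $\|\omega\|$ small forces $\gamma_i\approx\gamma_{i+1}$, which combined with the opposite-sign $\lambda$ comparisons yields $E_i\gtrsim E_{i+1}$. This rules out the alternative $E_i\leq e^{-q'L}E_{i+1}$ in Lemma \ref{basic observation}(1), forcing $E_i\leq e^{-q'L}E_{i-1}$. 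Propagating backward via Lemma \ref{basic observation}(3) as long as $\lambda<\epsilon_1$ at the next index, one concludes that $i^*$ must lie in $[1,\mathfrak a]$, a contradiction. The symmetric case contradicts $\mathfrak b$.

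For part 2), $\lambda_i\geq\epsilon_1$ throughout $[\mathfrak a+1,\mathfrak b-1]$, so Theorem \ref{3-circle.nonlinear2} applies to every triple there at which $\max\nu>\epsilon$, provided $\|\omega\|<\delta_1(q,L,\epsilon)$. The identity $\nu_i=E_i/E_i^{*}-1=(2m)^2/(2\pi L\lambda_i^2)-1$ gives the uniform bound $\nu_i\leq C(\epsilon_1^{-2})$ on the middle region. If $\nu_{i_0}$ were not small for some $i_0$ with $\mathfrak a+s<i_0<\mathfrak b-s$, Lemma \ref{basic observation}(1) applied to $\nu$ at $i_0$ would force exponential growth of $\nu$ toward one end, and by Lemma \ref{basic observation}(2)--(3) this growth persists as long as $\max\nu>\epsilon$ in the neighboring triples. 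Matching the propagated growth $\nu_{i_0}e^{kq'L}$ against the endpoint bound $C(\epsilon_1^{-2})$ over the $s-2$ buffered indices (the $-2$ accounting for the triples needed to apply each 3-circle step), and combining the two one-sided propagations from each end, yields the stated $\nu_{i_0}\leq 2(1+\epsilon_1^{-2})e^{-q'(s-2)L/2}$; taking $\delta(s)$ small enough to cover the $O(s)$ applications of Theorem \ref{3-circle.nonlinear2} closes the proof.
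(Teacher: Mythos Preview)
Your transition-point observation and the overall strategy for Part 2) are essentially the paper's, but Part 1) contains a genuine error in orientation that makes your decomposition inconsistent both with the statement and with your own key lemma.

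The decay direction in your definition of $\mathfrak a$ is reversed. You require $E_j\le e^{-q'L}E_{j+1}$ on $[1,\mathfrak a]$, which makes $E$ \emph{increase} there and yields $E_i\le Ce^{-(\mathfrak a-i)q'L}\sup_k E_k$. The theorem asserts $E_i\le Ce^{-iq'L}\sup_k E_k$ on $[1,\mathfrak a]$, i.e.\ $E$ \emph{decreases} toward the middle boundary. Worse, your own transition-point argument shows the correct direction: at the first index with $\lambda_i<\epsilon_1$ and $\lambda_{i+1}\ge\epsilon_1$ you correctly deduce $E_i\le e^{-q'L}E_{i-1}$, which propagates \emph{leftward} (Lemma \ref{basic observation}(3)) and forces $E$ to decrease on the left segment. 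So the condition you impose at every $j\le\mathfrak a$ contradicts what must hold at $j=\mathfrak a$. The paper avoids this by not building a directional inequality into the definition: it sets $I=\{i:\lambda_i\ge\epsilon_1\}$ and proves $I$ is an interval directly, by showing that whenever $\lambda_i<\epsilon_1$ and $E_i\le e^{-q'L}E_{i+1}$, one must have $\lambda_{i+1}<\epsilon_1$ as well (the contrapositive of your transition-point step), so the small-$\lambda$ region together with its decay direction propagates all the way to one end. The interval $[\mathfrak a+1,\mathfrak b-1]$ is then simply $I$, and the decays on the two flanks fall out automatically with the correct signs.

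Two smaller points. Your maximizer argument does not force $I\ne\emptyset$: the maximum of $E_k$ can occur at $k=1$ or $k=l$, where the three-circle inequality is unavailable; the paper explicitly allows $I=\emptyset$ and sets $\mathfrak b=\mathfrak a+1$ in that case. Also, the identity $\nu_i=E_i/E_i^*-1$ is false in general since $u-\F_i(u)$ and $\F_i(u)$ are not $L^2(e^{-2mt})$-orthogonal; one only has $\nu_i\le 2(1+E_i/E_i^*)\le 2(1+\epsilon_1^{-2})$ via the triangle inequality, which is exactly the endpoint bound the paper uses to terminate the $\nu$-propagation after $s-2$ steps.
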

\begin{proof}
(1) 
Let $$I=\{1\le i\le l| \lambda_i\ge \epsilon_1\}.$$
We first prove the property that 
\begin{align}\label{connectedness}
\text{either } I=\emptyset \quad  \text{ or } \quad I=[\alpha,\beta]
\end{align} 
for some integers $1\le \alpha\le\beta\le l$. 

We consider the following four extreme cases:
\[
I= [1,l], [1,l-1], [2,l], [2,l-1].
\]
In any of these four cases, the Part 1) holds trivially. Hence, we may assume the existence of some $1<i<l$ such that $i\notin I$, i.e. $\lambda_i<\epsilon_1$. The strategy of proving \eqref{connectedness} is to show that for any $i\notin I$, either $[1,i] \cap I = \emptyset$, or $[i,l]\cap I =\emptyset$.

 By Theorem \ref{3-circle.nonlinear1}, when $L>L_0(q)$ and $\delta_2$ sufficiently small, 
$\set{E_k}$ satisfies  \eqref{eqn:qphi} on $Q=Q_{i-1}\cup Q_{i}\cup Q_{i+1}$.
Then by Lemma \ref{basic observation},  we have either $E_i\leq e^{-q'L}E_{i+1}$
or $E_i\leq e^{-q'L}E_{i-1}$. 

In the first case ($E_i\leq e^{-q'L}E_{i+1}$), we claim that by taking $\delta_2$ small, we have $\lambda_{i+1}<\epsilon_1$.  In fact, if the claim is not true, then $\lambda_{i+1}\geq \epsilon_1>0$ and hence $\gamma_{i+1}>0$ and $E_{i+1}>0$ by our convention \eqref{convention}. By Lemma \ref{lem:gamma.diff}, we know 
\begin{align*}
|\gamma_{i+1}-\gamma_{i}|\le C\|\omega\|_{L^\infty(Q_i\cup Q_{i+1})}\sqrt{E_{i}+E_{i+1}}\le  C\delta_2 \sqrt{(1+e^{-q'L})E_{i+1}}\le C\delta_2 \sqrt{E_{i+1}},
\end{align*}
which implies that
\begin{align*}
|1-\frac{\gamma_i}{\gamma_{i+1}}|\le C\delta_2 \frac{\sqrt{E_{i+1}}}{\gamma_{i+1}}=\frac{C\delta_2}{\lambda_{i+1}}\le \frac{C\delta_2}{\epsilon_1}.
\end{align*}
 Hence, by taking $\delta_2=\delta_2(\epsilon_1,q',L)=\delta_2(q,L)$ small, we can have $C\delta_2/ \epsilon_1$ as small as we need so that
\[
\gamma_{i+1} < e^{q'L/2} \gamma_i.
\]
If $E_i=0$, then we know $\varphi_u(t)=0$ on $((i-1)L,iL)$ and hence $\gamma_i(u)=0$, which contradicts the above inequality. So, we know $E_i>0$. 
Since $E_i\leq e^{-q'L} E_{i+1}$, we get $\lambda_{i+1}<\lambda_i<\epsilon_1$, which is a contradiction, and the claim is proved.

As a result, if $i+2\le l$,  we may apply Theorem \ref{3-circle.nonlinear1} on $Q=Q_i\cup Q_{i+1} \cup Q_{i+2}$. This time, by Lemma \ref{basic observation} again, our assumption on $E_i\le e^{-q'L} E_{i+1}$ forces $E_{i+1}\le e^{-q'L} E_{i+2}$. And we can continue to do so until $Q=Q_{l-2}\cup Q_{l-1}\cup Q_l$. We then obtain that 
\begin{align}\label{small to right}
\max_{i\le k\le l}\lambda_k<\epsilon_1
\end{align}
and $e^{-kqL}E_k$ is increasing for $k=i,\cdots,l$. Namely, $[i,l]\cap I =\emptyset$.

Similar discussion applies to the second case ($E_i\leq e^{-q' L} E_{i-1}$), for which we obtain
\begin{align}\label{small to left}
\max_{1\le k\le i}\lambda_k<\epsilon_1
\end{align}
and  $e^{kqL}E_k$ is decreasing for $k=1,2,\ldots, i$. Hence, $[1,i]\cap I=\emptyset$. In summary, we have proved \eqref{connectedness}.

With \eqref{connectedness}, we now prove Part 1) of the theorem by the following discussion. 

If $I=\emptyset$, by \eqref{small to right} and \eqref{small to left}, we know there exists $0\le i_0\le l$ such that $e^{qkl}E_k$ is decreasing on $[1,i_0]$ and $e^{-qkl}E_k$ is increasing on $[i_0+1,l]$. Letting $\mathfrak a=i_0$ and $\mathfrak b=i_0+1$, we verify that Part 1) of the theorem holds. 

If $I=[\alpha,\beta]$ for some integers $1\le \alpha\le\beta\le l$, letting $\mathfrak a=\alpha-1$ and $\mathfrak b=\beta+1$, by \eqref{small to right} and \eqref{small to left}, we also have Part 1) of the theorem holds. 
This finishes the proof of Part 1).

(2)  For $2<s< (\mathfrak{b}-\mathfrak{a})/4$ fixed, set 
\[
\epsilon=2e^{-(s-2)q'L/2}(1+\epsilon_1^{-2}).
\]
 If the claim in Part 2) is not true, then there is some $\mathfrak{a}+s<i<\mathfrak{b}-s$ such that $\nu_i> \epsilon>0$. Theorem \ref{3-circle.nonlinear2} determines $\delta$ depending on $\epsilon$ (hence $s$) such that as long as $\norm{\omega}_{L^\infty}<\delta$, $\set{E_k^{\dagger}}$ satisfies \eqref{eqn:qphi} on $ Q_{i-1}\cup Q_i\cup Q_{i+1}$ for $q$ and $L>L_1$. By Lemma \ref{basic observation}, either $E_i^{{\dagger}}\le e^{-q'L}E^{{\dagger}}_{i+1}$ or $E^{{\dagger}}_{i}\le e^{-q'L} E_{i-1}^{{\dagger}}$. Assume without loss of generality that $E_i^{{\dagger}}\le e^{-q'L}E^{{\dagger}}_{i+1}$.

Since $\min\{\lambda_{i-1},\lambda_i,\lambda_{i+1}\}\ge \epsilon_1$, and $\|\omega\|_{L^\infty}\le \delta$, 
by the convention \eqref{convention}, Lemma \ref{gamma} and $E^*_i=\frac{2\pi L\gamma_i^2}{(2m)^2}$, we know $E^*_{i+1}>0$ and 
\begin{align*}
1-C\frac{\delta}{\epsilon_1}\le \frac{E_i^*}{E_{i+1}^*}\le 1+C\frac{\delta}{\epsilon_1}.
\end{align*}
By asking $\delta$ to be small, we have $0<E^*_{i+1}\leq e^{q' L/2} E^*_i$. Together with $E_i^{{\dagger}}\le e^{-q'L}E^{{\dagger}}_{i+1}$, we find that $\nu_{i+1}\geq e^{q' L/2}\nu_i>e^{q' L/2}\epsilon$.

We then repeat the argument on $Q=Q_i\cup Q_{i+1}\cup Q_{i+2}$ and so on for exactly $s-2$ times, so that we get
\begin{equation}
    \label{eqn:is2}
\nu_{i+s-2} > e^{q'(s-2)L/2} \epsilon = 2(1+\epsilon_1^{-2}).
\end{equation}
However, since $i':=i+s-2<\mathfrak{b}$, we have $\lambda_{i'}\geq \epsilon_1$, which implies
$$
\nu_{i'}=\frac{\int_{Q_{i'}}|u-\mathcal{F}_{i'}(u)|^2e^{-2mt} dtd\theta}{\int_{Q_{i'}}|\mathcal{F}_{i'}(u)|^2e^{-2mt} dtd\theta}\leq 2\left(1+\frac{\int_{Q_{i'}}|u|^2e^{-2mt} dtd\theta}{\int_{Q_{i'}}|\mathcal{F}_{i'}(u)|^2e^{-2mt} dtd\theta}\right)\leq 2(1+\frac{1}{\lambda_{i'}^2})\le 2(1+\epsilon_1^{-2}).
$$
This is a contradiction to \eqref{eqn:is2} and the proof of Part 2) is done.
\end{proof}

\section{Asymptotic limit of the neck}
\label{sec:limit}
There are three subsections in this section.  The second and the last are devoted to the proof of Theorem \ref{main1} and Theorem \ref{neck.first.order} respectively. While in the first subsection, we discuss the techniques needed to prove the decay of second fundamental forms.

\subsection{3-circle lemma for the second fundamental forms}
We can prove a 3-circle lemma for the Dirichlet energy functional of the Gauss map, which is just the  $L^2$ norm of the second fundamental form. The basic ingredients of the proof are: the equation of the Gauss map
\begin{equation}\label{tension.gauss}
\Delta \n-A_{G(2,\n)}(d \n,d \n)=\nabla^\bot_1H\wedge f_2-\nabla^\bot_2H\wedge f_1,
\end{equation}
 the following inequality (see Lemma 4.2 in \cite{Li-Yin})
\begin{equation}\label{Po.A}
\left|\left|\frac{\partial \n_k}{\partial t}\right|^2-\left|\frac{\partial \n_k}{\partial \theta}\right|^2\right|\leq Ce^{2u_k}\|A_k\|_{g_k}|H_k|,
\end{equation}
and the following 3-circle lemma for harmonic function
\begin{lem}\label{lem:oldthreecircle} (Lemma A.1 in \cite{Li-Yin})
For any $q\in (0,2)$, there exists $L_0'(q)$ such that if $L>L_0$, the following holds. Define $Q_i:=[(i-1)L,iL] \times S^1$.
Let $v$ be a harmonic function from $Q=Q_1\cup Q_2\cup Q_3$. If
\begin{equation}\label{Po.equation}
\int_{Q_2}\left( |\partial_t v|^2- |\partial_\theta v|^2\right) dtd\theta=0,
\end{equation}
then 
\[
\int_{Q_2} |\nabla v|^2 dtd\theta\leq e^{-qL}\left( \int_{Q_1}|\nabla v|^2 dtd\theta + \int_{Q_3} |\nabla v|^2 dtd\theta \right).
\]
\end{lem}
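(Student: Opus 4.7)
The plan is to reduce everything to Fourier modes. Write the harmonic function on $[0,3L]\times S^1$ as
\[
v = a + bt + \sum_{k=1}^\infty e^{kt}(a_k \cos k\theta + b_k\sin k\theta) + \sum_{k=1}^\infty e^{-kt}(a_k'\cos k\theta + b_k'\sin k\theta),
\]
and observe the standard Pohozaev-type conservation for harmonic $v$: direct differentiation using $v_{tt}=-v_{\theta\theta}$ gives $\frac{d}{dt}\int_0^{2\pi}(v_t^2 - v_\theta^2)d\theta = -2\int_0^{2\pi}\partial_\theta(v_t v_\theta)d\theta = 0$, so the quantity is independent of $t$. Hence the hypothesis $\int_{Q_2}(|\partial_t v|^2-|\partial_\theta v|^2)dtd\theta = 0$ is equivalent to the pointwise-in-$t$ identity, and a Fourier computation shows it translates into the single scalar constraint $b^2 = 2\sum_{k\ge 1}k^2(a_k a_k' + b_k b_k')$.

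Next, compute the Dirichlet energies. The cross terms between distinct frequencies integrate to zero in $\theta$, and for each fixed $k\ge 1$ the cross term between $e^{kt}$ and $e^{-kt}$ also cancels when adding $|\partial_t v|^2 + |\partial_\theta v|^2$. With $A_k := a_k^2+b_k^2$ and $A_k' := (a_k')^2 + (b_k')^2$ and $c_k := \pi k(e^{2kL}-1)$, one gets
\[
\int_{Q_i}|\nabla v|^2\,dtd\theta = 2\pi L b^2 + \sum_{k\ge 1} c_k\bigl[A_k e^{2k(i-1)L} + A_k' e^{-2kiL}\bigr].
\]
For each $k\ge 1$ the ratio $E_2^{(k)}/(E_1^{(k)}+E_3^{(k)})$ is a convex combination of $e^{2kL}/(1+e^{4kL}) \le e^{-2kL}$ and $e^{-4kL}/(e^{-2kL}+e^{-6kL}) \le e^{-2kL}$, so $E_2^{(k)} \le e^{-2kL}(E_1^{(k)}+E_3^{(k)}) \le \frac{1}{2}e^{-qL}(E_1^{(k)}+E_3^{(k)})$ for every $q<2$ once $L$ is large enough.

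The only obstruction is the linear mode $a+bt$, which contributes $E_i^{(0)} = 2\pi L b^2$ and by itself violates the three-circle inequality; this is exactly where the Pohozaev constraint is used. By AM-GM applied with weight $e^{4kL}$,
\[
2|a_k a_k' + b_k b_k'| \le A_k e^{4kL} + A_k' e^{-4kL},
\]
and the elementary lower bounds $E_1^{(k)}+E_3^{(k)} \ge c_k A_k e^{4kL}$ and $E_1^{(k)}+E_3^{(k)} \ge c_k A_k' e^{-2kL}$ give
\[
b^2 \le \sum_{k\ge 1} k^2(A_k e^{4kL} + A_k' e^{-4kL}) \le \frac{2}{\pi}\sum_{k\ge 1}\frac{k}{e^{2kL}-1}\bigl(E_1^{(k)}+E_3^{(k)}\bigr) \le \frac{2}{\pi(e^{2L}-1)}\bigl(E_1+E_3\bigr),
\]
after absorbing the small $E_1^{(0)}+E_3^{(0)}$ contribution back into the left. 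Therefore $E_2^{(0)} = 2\pi L b^2 \le \frac{4L}{e^{2L}-1}(E_1+E_3) \le \frac{1}{2}e^{-qL}(E_1+E_3)$ for any $q<2$ and $L$ large. Adding this to the summed mode-wise bounds from the previous paragraph yields $E_2 \le e^{-qL}(E_1+E_3)$, which is the claim.

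The main obstacle is precisely the linear mode, and the right choice of weight in the AM-GM step ($e^{4kL}$, matched to the decay of $E_1+E_3$ in the $A_k$ direction) is what lets the Pohozaev identity defeat the $L$-factor coming from $E_i^{(0)}=2\pi L b^2$. Everything else is a clean Fourier computation.
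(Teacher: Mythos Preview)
Your proof is correct. The paper itself does not prove this lemma but refers to \cite{Li-Yin}; the approach there is the same direct Fourier analysis you carry out, so there is nothing to compare. Two small remarks: your phrase ``convex combination'' is in fact literally correct, since $(\alpha X+\beta Y)/(\gamma X+\delta Y)=(\alpha/\gamma)\cdot\frac{\gamma X}{\gamma X+\delta Y}+(\beta/\delta)\cdot\frac{\delta Y}{\gamma X+\delta Y}$; and in the chain bounding $b^2$, the sharper inequality $A_ke^{4kL}+A_k'e^{-4kL}\le A_k(1+e^{4kL})+A_k'(e^{-2kL}+e^{-6kL})=(E_1^{(k)}+E_3^{(k)})/c_k$ already gives the factor $k/(\pi(e^{2kL}-1))$ without the extra $2$, though of course this does not affect the conclusion.
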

By \eqref{Po.A}, \eqref{Po.equation} is almost true if $H$ is small when compared with $A$. Therefore, 
we can prove 
\begin{lem}\label{3circle A} (Lemma 4.3 in \cite{Li-Yin})
Let $L>L_0'(q)$.
Assume that $f:Q\rightarrow\R^n$ is a conformal Willmore immersion with $g=e^{2u}(dt^2+d\theta^2)$, and
$$
|\nabla u|<\beta,
$$
There exists $\epsilon_0=\epsilon_0(q,\beta)>0$ and $\delta=\delta(q,\beta)>0$, such that if
$$
\int_{Q_i}|A|^2dV_g<\epsilon_0,\s and\s 		\int_{Q} |H|^2 dV_g\leq \delta \int_Q |A|^2 dV_g
$$
then  $\int_{Q_i}|A|^2$  satisfies   \eqref{eqn:qphi}  on $Q$.
\end{lem}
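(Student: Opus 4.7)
The plan is to argue by contradiction and compactness, mirroring Theorems~\ref{3-circle.nonlinear1} and~\ref{3-circle.nonlinear2}: I rescale the Gauss map to extract a nontrivial harmonic limit on $Q$, verify the Pohozaev-type hypothesis of Lemma~\ref{lem:oldthreecircle} in the limit, and derive a contradiction from the harmonic three-circle inequality applied to that limit.

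Suppose the lemma fails. Then there is a sequence of conformal Willmore immersions $f_k:Q\to\R^n$ with $|\nabla u_k|\leq\beta$, with $\max_i\int_{Q_i}|A_k|^2\,dV_{g_k}\to 0$, and with $\delta_k:=\int_Q|H_k|^2\,dV_{g_k}/\int_Q|A_k|^2\,dV_{g_k}\to 0$, for which $\Phi_i:=\int_{Q_i}|A_k|^2\,dV_{g_k}$ violates \eqref{eqn:qphi}. In conformal coordinates $|\partial_t\n|^2+|\partial_\theta\n|^2=e^{2u}|A|_g^2$, so $\int_{Q_i}|A_k|^2\,dV_{g_k}=\int_{Q_i}|\nabla\n_k|^2\,dtd\theta$. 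I then set $\alpha_k^2:=\int_{Q_2}|\nabla\n_k|^2\,dtd\theta$ and $w_k:=(\n_k-\bar\n_k)/\alpha_k$, where $\bar\n_k$ is the average of $\n_k$ on $Q$. With this normalization $\int_{Q_2}|\nabla w_k|^2=1$, and the failure of 3-circle forces $\int_{Q_1\cup Q_3}|\nabla w_k|^2\leq e^{qL}$, so $w_k$ is bounded in $H^1(Q)$ by the Poincar\'e inequality.

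The main step is to show $w_k\to w$ in $C^{1,\alpha}_{loc}(\mathring Q)$ with $w$ harmonic. Dividing the Gauss map equation \eqref{tension.gauss} by $\alpha_k$ gives
\[
\Delta w_k=\alpha_k^{-1}A_{G(2,\n)}(d\n_k,d\n_k)+\alpha_k^{-1}\bigl(\nabla_1^\perp H_k\wedge f_{k,2}-\nabla_2^\perp H_k\wedge f_{k,1}\bigr).
\]
The nonlinear term is pointwise $\lesssim|d\n_k|^2$; the $\varepsilon$-regularity for Willmore surfaces (applicable since $\int_{Q_i}|A_k|^2<\epsilon_0$) gives small interior $L^\infty$ bounds on $|d\n_k|$, so its $L^p_{loc}$-norm vanishes after dividing by $\alpha_k$. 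The forcing term requires the interior estimate $\|\nabla H_k\|_{L^2_{loc}}\leq C\|H_k\|_{L^2(Q)}\leq C\sqrt{\delta_k}\,\alpha_k$, coming from elliptic regularity for the Willmore equation $\Delta^\perp H_k+Q(A_k)H_k=0$ together with the $\varepsilon$-regularity bounds on $A_k$; after dividing by $\alpha_k$ this also tends to $0$ in $L^2_{loc}$. Establishing this $\|\nabla H_k\|_{L^2}$ estimate via the Willmore equation is the main technical obstacle. Once in hand, $L^p$-elliptic regularity produces the claimed convergence and $w$ is harmonic.

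To apply Lemma~\ref{lem:oldthreecircle}, I verify the Pohozaev condition in the limit using \eqref{Po.A} and Cauchy--Schwarz:
\[
\int_{Q_2}\bigl||\partial_t w_k|^2-|\partial_\theta w_k|^2\bigr|\,dtd\theta\leq C\alpha_k^{-2}\Bigl(\int_{Q_2}|A_k|^2dV_{g_k}\Bigr)^{1/2}\Bigl(\int_{Q_2}|H_k|^2dV_{g_k}\Bigr)^{1/2}\leq C\sqrt{\delta_k}\to 0.
\]
Hence $w$ satisfies $\int_{Q_2}(|\partial_t w|^2-|\partial_\theta w|^2)dtd\theta=0$. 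Strong $H^1$-convergence on $Q_2$ (inherited from $C^{1,\alpha}_{loc}$) preserves $\int_{Q_2}|\nabla w|^2=1$, so $w$ is nontrivial. Weak $H^1(Q)$-convergence together with Fatou gives $\int_{Q_1\cup Q_3}|\nabla w|^2\leq\liminf\int_{Q_1\cup Q_3}|\nabla w_k|^2\leq e^{qL}$. Lemma~\ref{lem:oldthreecircle}, in its ``either zero or strict'' form applicable when $w\not\equiv\mathrm{const}$, then yields $1=\int_{Q_2}|\nabla w|^2<e^{-qL}\bigl(\int_{Q_1}|\nabla w|^2+\int_{Q_3}|\nabla w|^2\bigr)\leq 1$, the desired contradiction.
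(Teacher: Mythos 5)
Your argument is correct and is essentially the same compactness/blow-up proof that the paper gives for the closely analogous Lemma \ref{3-circle.A.mu}: normalize $d\n_k$ by its $L^2(Q_2)$-norm, control the forcing term through the interior estimate for $\nabla^\perp H_k$ coming from the structure of the Willmore equation (Lemma \ref{equ. struc.}), pass to a nontrivial harmonic limit, verify the Pohozaev condition via \eqref{Po.A}, and contradict the strict form of the harmonic three-circle Lemma \ref{lem:oldthreecircle} together with Fatou. The one detail worth adding is that since the hypothesis only bounds $\nabla u$, you should first rescale $f$ so that $\|u\|_{L^\infty(Q)}\le C(\beta,L)$ before invoking the $\epsilon$-regularity and the elliptic estimates, as the paper does explicitly at the start of the proof of Lemma \ref{3-circle.A.mu}.
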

The proof of Lemma \ref{3circle A}, Lemma \ref{lem:oldthreecircle} and \eqref{Po.A} can be found of \cite{Li-Yin}.

While the above lemma is good at showing the decay of $A$ when we know the decay of $H$, for this paper, we need another version that can be applied when $H$ does not change much. To state the result, we define
$$
\mu_i(f)=\frac{\int_{Q_i}|H|^2dV_g}{\int_{Q_i}|A|^2dV_g}.
$$
and we use the convention that $\mu_i=0$ if $\int_{Q_i}|A|^2dV_g=0$. 
\begin{lem}\label{3-circle.A.mu}
Let $L>L_2(m,q)=\max\{L_0',\frac{2\log 2}{2-q}\}$ .
	Assume that $f:Q\rightarrow\R^n$ is a conformal Willmore immersion with $g=e^{2u}(dt^2+d\theta^2)$ satisfying
$$
|\nabla u|<\beta
$$
and
\begin{align}\label{Willmore cmpr. ass.}
\int_{Q_i}|H|^2dV_{g}\leq 2\int_{Q_{i+1}}|H|^2dV_{g}\le 4\int_{Q_{i}}|H|^2dV_{g},\s i=1,2.
\end{align}
There exists $\delta_3=\delta_3(q,\beta)>0$ and $\epsilon_2=\epsilon_2(q,\beta)>0$, such that if
$$
\int_{Q_i}|A|^2dV_g<\delta_3,\s and\s \min\{\mu_1,\mu_2,\mu_3\}< \epsilon_2,
$$
then $\int_{Q_i}|A|^2$ and $\frac{1}{\mu_i}$ satisfies   \eqref{eqn:qphi}  on $Q$.
\end{lem}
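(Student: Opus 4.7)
My plan is to prove Lemma \ref{3-circle.A.mu} by contradiction, following the blow-up strategy used for Lemma \ref{3circle A} in \cite{Li-Yin} and for Theorem \ref{3-circle.nonlinear1}. The novelty compared with Lemma \ref{3circle A} is that only \emph{one} of $\mu_1,\mu_2,\mu_3$ is assumed small rather than uniform smallness across $Q$. The first step is to upgrade the hypothesis $\min\{\mu_1,\mu_2,\mu_3\}<\epsilon_2$ to $\mu_2\to 0$ along any bad sequence. Indeed, suppose for contradiction that there is a sequence $f_k$ of conformal Willmore immersions with $|\nabla u_k|<\beta$, $\int_{Q_i}|A_k|^2 dV_{g_k}<\delta_3\to 0$, $\mu_{i_0}(f_k)\to 0$ for some $i_0\in\{1,2,3\}$, and the opposite 3-circle inequality for $\int|A|^2$. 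The negation of \eqref{eqn:qphi} gives $\int_{Q_{i_0}}|A_k|^2 dV_{g_k}\leq Ce^{qL}\int_{Q_2}|A_k|^2 dV_{g_k}$ (automatic when $i_0=2$), while \eqref{Willmore cmpr. ass.} gives $\int_{Q_2}|H_k|^2 dV_{g_k}\leq 2\int_{Q_{i_0}}|H_k|^2 dV_{g_k}$; dividing yields $\mu_2(f_k)\leq 2Ce^{qL}\mu_{i_0}(f_k)\to 0$.

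Next, I would normalize $\n_k$ by setting $a_k^2=\int_{Q_2}|\nabla\n_k|^2 dtd\theta=\int_{Q_2}|A_k|^2 dV_{g_k}$ and $\tilde\n_k=(\n_k-c_k)/a_k$ for $c_k$ the average of $\n_k$ on $Q_2$, so that $\int_{Q_2}|\nabla\tilde\n_k|^2 dtd\theta=1$. The negation of 3-circle bounds $\tilde\n_k$ uniformly in $W^{1,2}(Q)$. The rescaled Gauss map equation reads
\[
\Delta\tilde\n_k=a_k A_{G(2,\n)}(d\tilde\n_k,d\tilde\n_k)+\frac{1}{a_k}\bigl(\nabla^\bot_1 H_k\wedge f_2-\nabla^\bot_2 H_k\wedge f_1\bigr).
\]
The first term is $O(a_k)$ in $L^1$. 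For the second, standard $W^{2,2}$ interior regularity for the Willmore equation, available under the small energy hypothesis $\int|A|^2<\delta_3$ and $|\nabla u|<\beta$, gives $\|\nabla H_k\|_{L^2(Q_2)}\leq C\|H_k\|_{L^2(Q)}=C\sqrt{\mu_2}\,a_k=o(a_k)$, so this term is $o(1)$ in $L^2$. By elliptic theory, $\tilde\n_k$ converges (after extraction) to a harmonic limit $\tilde\n$ in $W^{1,2}_{loc}$, with $\int_{Q_2}|\nabla\tilde\n|^2 dtd\theta=1$ by strong convergence.

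The heart of the argument is to verify the Pohozaev condition \eqref{Po.equation} for $\tilde\n$. Using \eqref{Po.A} and Cauchy--Schwarz,
\[
\int_{Q_2}\bigl||\partial_t\tilde\n_k|^2-|\partial_\theta\tilde\n_k|^2\bigr| dtd\theta\leq\frac{C}{a_k^2}\Bigl(\int_{Q_2}|A_k|^2 dV_{g_k}\Bigr)^{1/2}\Bigl(\int_{Q_2}|H_k|^2 dV_{g_k}\Bigr)^{1/2}=C\sqrt{\mu_2}\to 0,
\]
so $\tilde\n$ satisfies \eqref{Po.equation}. Lemma \ref{lem:oldthreecircle} then yields the 3-circle inequality for $\int_{Q_i}|\nabla\tilde\n|^2 dtd\theta$, which combined with Fatou's lemma contradicts the negation assumed for $\tilde\n_k$. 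This establishes the 3-circle for $\int_{Q_i}|A|^2 dV_g$. To deduce the 3-circle for $1/\mu_i$, I would apply the above with the slightly improved rate $\tilde q=q+L^{-1}\log 2<2$, permissible since $L>L_2\geq 2\log 2/(2-q)$. Using \eqref{Willmore cmpr. ass.} to compare $\int_{Q_j}|H|^2$ with $\int_{Q_2}|H|^2$ for $j=1,3$,
\[
\frac{1}{\mu_2}\leq e^{-\tilde q L}\frac{\int_{Q_1}|A|^2 dV_g+\int_{Q_3}|A|^2 dV_g}{\int_{Q_2}|H|^2 dV_g}\leq 2e^{-\tilde q L}\Bigl(\frac{1}{\mu_1}+\frac{1}{\mu_3}\Bigr)=e^{-qL}\Bigl(\frac{1}{\mu_1}+\frac{1}{\mu_3}\Bigr),
\]
as required.

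I expect the most technical step will be the elliptic estimate $\|\nabla H_k\|_{L^2(Q_2)}\leq C\|H_k\|_{L^2(Q)}$, which rests on standard $W^{2,2}$ regularity for the Willmore equation in the small-energy regime together with the Lipschitz control on the conformal factor; once this is secured, the remaining ingredients are purely the Fourier and three-circle machinery already in place in \cite{Li-Yin} and in the preceding sections.
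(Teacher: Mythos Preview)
Your proposal is correct and follows essentially the same route as the paper: contradiction via a normalized Gauss map converging to a harmonic limit satisfying \eqref{Po.equation}, with the reduction $\mu_{i_0}\to 0\Rightarrow\mu_2\to 0$ from the failed three-circle plus \eqref{Willmore cmpr. ass.}, and the final derivation of the $1/\mu_i$ inequality from the $|A|^2$ inequality at a sharper rate, all appearing in the paper in the same order. The only cosmetic differences are the paper's choice $\tilde q=(q+2)/2$ versus your $\tilde q=q+L^{-1}\log 2$, and its use of $\epsilon$-regularity (after scaling so that $\|u_k\|_{L^\infty}\le C$) to control $\nabla^\bot H_k$ in $L^\infty$ rather than $L^2$, which makes the smallness of $\Delta\tilde\n_k$ in $L^2_{loc}$ (needed for the strong $W^{1,2}$ convergence you invoke) slightly more direct.
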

\begin{proof}
We prove by contradiction. Assume the existence of a sequence of $f_k:Q\to \mathbb{R}^n$ with $f_k^*g_k=e^{2u_k}(dt^2+d\theta^2)$ satisfying $|\nabla u_k|\le \beta$,
\begin{align}\label{mean curvature equal}
\frac{1}{2} \int_{Q_i}|H_k|^2dV_{g_k}\le \int_{Q_{i+1}}|H_k|^2dV_{g_k}\le 2\int_{Q_i}|H_k|^2dV_{g_k}, i=1,2,
\end{align}
and
$$
\int_{Q_i}|A_k|^2dV_{g_k}\to 0 \text{ and } \min\{\mu_1(f_k),\mu_2(f_k),\mu_3(f_k)\}\to 0.
$$
After scaling, we may assume $\|u_k\|_{L^\infty}\le C$ without loss of generality.
Then $f_k$ converges smoothly  to a conformal Willmore immersion  $f:Q\to \mathbb{R}^n$ with $f^*g=e^{2u}(dt^2+d\theta^2)$ and $A_f=0$. Consider the Gauss map  $\n_k=e^{-2u_k}\partial_1f_k\wedge \partial_2f_{k}$, which satisfies the equation
$$
\Delta_{g_k}\n_k-A_{G(2,n)}(d\n_k,d\n_k)=\nabla_1^{\bot}H_k\wedge\partial_2f_k-\nabla_2^{\bot}H_k\wedge \partial_1f_k.
$$
If the lemma is not true, then either
\begin{align}\label{contra. ass. 1.}
\int_{Q_2}|A_k|^2dV_{g_k}>e^{-qL}(\int_{Q_1}|A_k|^2dV_{g_k}+\int_{Q_3}|A_k|^2dV_{g_k}),
\end{align}
or 
\begin{align}\label{contra. ass.2}
\frac{1}{\mu_2(f_k)}>e^{-qL}(\frac{1}{\mu_1(f_k)}+\frac{1}{\mu_3(f_k)}).
\end{align}
If \eqref{contra. ass. 1.} were to be true, then by the conformal invariance, we know
\begin{align}\label{contradiction assumption}
\int_{Q_2}|\nabla \n_k|^2dtd\theta>e^{-qL}(\int_{Q_1}|\nabla \n_k|^2dtd\theta+\int_{Q_3}|\nabla \n_k|^2dtd\theta).
\end{align}
Then by \eqref{mean curvature equal}, there holds
$$
\mu_2(f_k)\le 2 e^{qL}\min\{\mu_1(f_k),\mu_2(f_k),\mu_3(f_k)\}\to 0.
$$
Setting $\phi_k=\frac{\n_k}{\sqrt{\int_{Q_2}|\nabla \n_k|^2dtd\theta}}=:\frac{\n_k}{c_k}$, we have
\begin{equation}
    \label{eqn:phik}
\Delta_{g_k}\phi_k-A_{G(2,n)}(d\n_k,d\phi_k)=\nabla_1^{\bot}\frac{H_k}{c_k}\wedge\partial_2f_k-\nabla_2^{\bot}\frac{H_k}{c_k}\wedge \partial_1f_k,
\end{equation}
$$
\|\nabla \phi_k\|_{L^2(Q_2)}=1 \text{ and } \|\nabla \phi_k\|_{L^2(Q)}\le \sqrt{1+e^{qL}}\le C.
$$
In order to control the right hand side above, we use the equation of $H_k$, which by Lemma \ref{equ. struc.} takes the form
$$
\Delta H_k=\alpha_k H_k+\beta_k\nabla H_k,
$$
with $\|\alpha_k\|_{L^\infty}+\|\beta_k\|_{L^\infty}\to 0$. Here to verfiy the assumptions of Lemma \ref{equ. struc.}, we have used A1), A2) and the assumption that $\norm{u_k}_{L^\infty}\le C$.
Now, the elliptic estimate implies that for any $\eta>0$,
\begin{align*}
\|\nabla^{\bot}H_k\|_{L^\infty((\eta,3L-\eta)\times S^1)}&\le C(\eta)\|H_k\|_{L^2(Q)}\\
&\le 5C(\eta)\|H_k\|_{L^2(Q_2)}\\
&= 5C(\eta)\sqrt{\mu_{2}(f_k)}\|\nabla \n_k\|_{L^2(Q_2)}=5C(\eta)c_k\sqrt{\mu_{2}(f_k)}.
\end{align*}
Combining this with $|\partial_jf_k|=e^{u_k}\to e^u$, we know
\begin{align*}
\|\nabla_1^{\bot}\frac{H_k}{c_k}\wedge\partial_2f_k-\nabla_2^{\bot}\frac{H_k}{c_k}\wedge \partial_1f_k\|_{L^\infty((\eta,3L-\eta)\times S^1)}\le C\sqrt{\mu_2(f_k)}\to 0.
\end{align*}
Moreover, we also have
\[
\|A_{G(2,n)}(d\n_k,d\phi_k)\|_{L^2(Q)}\le C\|\nabla \n_k\|_{L^\infty}\|\nabla \phi_k\|_{L^2(Q)}\to 0
\]
thanks to A2) and the $\epsilon$-regularity.
In summary, we have proved that the $L^2$ norm of $\Delta \phi_k$ on $[\eta,3L-\eta]\times S^1$ converges to zero. 
Hence, 
$$
\|\phi_k-\bar{\phi}_k\|_{W^{2,2}[2\eta,3L-2\eta]}\le C(\eta), \quad \forall \eta>0.$$ 
Thus $\phi_k-\bar{\phi}_k$ converges in $W^{1,s}$ to some harmonic function $\phi$ for any $s<\infty$.
By \eqref{Po.A}, we know
\begin{align*}
|\int_{Q_2}(|\frac{\partial \phi_k}{\partial t}|^2-|\frac{\partial \phi_k}{\partial \theta}|^2) dtd\theta|
\le C\frac{\|A_k\|_{L^2(Q_2,dV_{g_k})}\|H_k\|_{L^2(Q_2,dV_{g_k})}}{c^2_k}=C \sqrt{\mu_2(f_k)}\to 0.
\end{align*}
 The convergence is strong enough so that 
 \begin{align*}
 \int_{Q_2}|\frac{\partial \phi}{\partial t}|^2dtd\theta=\int_{Q_2}|\frac{\partial \phi}{\partial \theta}|^2 dtd\theta.
  \end{align*}
The three-circle for harmonic function (Lemma \ref{lem:oldthreecircle}) implies that
$$
\int_{Q_2}|\nabla \phi|^2dtd\theta< e^{-qL}(\int_{Q_1}|\nabla \phi|^2dtd\theta+\int_{Q_3}|\nabla \phi|^2dtd\theta).
$$
 This contradicts \eqref{contradiction assumption}.  Therefore, \eqref{contra. ass. 1.} does not hold, that is,  $\int_{Q_i}|A_k|^2dV_{g_k}$ satisfies \eqref{eqn:qphi}. 

 It remains to show that \eqref{contra. ass.2} is not true. For this purpose, we set $\tilde q=\frac{q+2}{2}$ and by the first half that has been proved above, there exist $\delta_3$ and $\epsilon_2$ such that $\int_{Q_i}|A_k|^2dV_{g_k}$ satisfies \eqref{eqn:qphi} for $\tilde q$.

 By \eqref{Willmore cmpr. ass.}, if one of $\int_{Q_i}\abs{H_k}^2dV_g$ vanishes, so do the other two. In this case, $\frac{1}{\mu_i}=\infty$ for $i=1,2,3$ and there is nothing to prove. Hence, we may assume all $\int_{Q_i} \abs{H_k}^2 dV_g$ (hence $\int_{Q_i}\abs{A_k}^2 dV_g$) are positive. It follows from the following computation that \eqref{contra. ass.2} is not true.
$$
\frac{\frac{1}{\mu_2(f_k)}}{\frac{1}{\mu_1(f_k)}+\frac{1}{\mu_3(f_k)}}\le 2 \frac{\int_{Q_2}|A_k|^2dV_{g_k}}{\int_{Q_1}|A_k|^2dV_{g_k}+\int_{Q_3}|A_k|^2dV_{g_k}}\le 2e^{-\tilde qL}\le e^{-q L}.
$$
The proof of the lemma is complete.  
\end{proof}

\subsection{Proof of Theorem \ref{main1}}

For a conformal Willmore immersion $f:[-L,4L]\times S^1\rightarrow \R^n$ with $g=f^*(g_{\R^n})=e^{2u}(dt^2+d\theta^2)$, its mean curvature satisfies the equation
\begin{equation*}
2\Delta H + 4{\rm div}(H\cdot A_{pq}g^{ip}\partial_i f) -{\rm div}(|H|^2\nabla f)=0.
\end{equation*}
The main idea in the proof of Theorem \ref{main1} is to apply Theorem \ref{decay} to this equation. 
After a scaling in necessary, the above equation can be written as
\begin{equation}
    \label{WEF3C}
|\Delta \hat H|\leq \omega \cdot (|\hat H|+ |\nabla \hat H|)
\end{equation}
with 
\begin{equation}
    \label{eqn:smallomega}
\|\omega\|_{C^0([0,3L]\times S^1)}<C\|A\|_{L^2([-L,4L]\times S^1)}.
\end{equation}
This is a consequence of the $\epsilon$-regularity and we give details in the appendix (see Lemma \ref{equ. struc.}).

First, we prove the following slightly weaker version:

\begin{pro}\label{weak version}
Let $f_k$ be a conformal and Willmore immersion from $[0,m_kL]\times S^1$
into $\R^n$, which satisfies A1) and A2) and $L>\max\{L_1(m,q),L_2(m,q)\}$ (for $L_1(m,q)$ and $L_2(m,q)$ in Theorem \ref{decay} and Lemma \ref{3-circle.A.mu} respectively). Then for any $p>0$, there exists integers $0\le \mathfrak a_k<\mathfrak b_k\le m_k+1$, such that
$$
(\sum_{i=1}^{\mathfrak a_k}+\sum_{\mathfrak b_k}^{m_k})\|A_k\|_{L^2(Q_i)}^p<C(L,p,q)\Theta_k^\frac{p}{2}
$$
and (for $\epsilon_1$ in Theorem \ref{decay} and $\epsilon_2$ in Lemma \ref{3-circle.A.mu})
$$
\lambda_i(H_k)\geq \epsilon_1,\s \mu_i(f_k)\geq\frac{\epsilon_2}{2},\s when\s \mathfrak a_k+1\le i\leq \mathfrak b_k-1.
$$
Moreover, 
$$
\lim_{l\rightarrow+\infty}\lim_{k\rightarrow+\infty}\max_{\mathfrak a_k+l\leq i\leq \mathfrak b_k-l}(\nu_i(H_k)+|\lambda_i(H_k)-1|)=0.
$$ 
\end{pro}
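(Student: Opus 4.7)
\emph{Strategy and Step 1.} My plan is to apply Theorem~\ref{decay} to the mean curvature equation for $H_k$ to obtain preliminary boundaries $\mathfrak{a}_k^H < \mathfrak{b}_k^H$, then refine this middle via Lemma~\ref{3-circle.A.mu} to ensure $\mu_i(f_k)\ge\epsilon_2/2$, and produce the end decay of $\|A_k\|_{L^2(Q_i)}^p$ via two separate 3-circle arguments for $\int_{Q_i}|A_k|^2 dV_{g_k}$. By Lemma~\ref{equ. struc.} from the appendix, the Willmore equation for $H_k$ on any triple $Q$ takes the form $|\Delta H_k|\le \omega_k(|H_k|+|\nabla H_k|)$ with $\|\omega_k\|_{L^\infty(Q)}\le C\sqrt{\Theta_k}\to 0$, so Theorem~\ref{decay} applies. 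It yields $\mathfrak{a}_k^H<\mathfrak{b}_k^H$ such that $E_i(H_k)$ decays geometrically on $[1,\mathfrak{a}_k^H]$ and $[\mathfrak{b}_k^H,m_k]$, $\lambda_i(H_k)\ge \epsilon_1$ on the complement, and $\nu_i(H_k)\to 0$ in the deep middle (part~2 of Theorem~\ref{decay}). By A1), $e^{-2mt}\sim e^{2u_k}$, hence $E_i(H_k)\sim\int_{Q_i}|H_k|^2 dV_{g_k}$, and A2) gives $\sup_j E_j(H_k)\le C\Theta_k$.

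\emph{Step 2: refining by $\mu_i$.} Within the $H$-middle, Lemma~\ref{gamma} with $\epsilon=\epsilon_1$ and small $\omega_k$ forces $\gamma_{i+1}(H_k)/\gamma_i(H_k)\to 1$, so $\int_{Q_i}|H_k|^2 dV_{g_k}$ is comparable across adjacent cells, verifying hypothesis~\eqref{Willmore cmpr. ass.} of Lemma~\ref{3-circle.A.mu}. Whenever $\mu_i(f_k)<\epsilon_2$ on some triple in the $H$-middle, Lemma~\ref{3-circle.A.mu} yields 3-circle decay of $\int_{Q_i}|A_k|^2 dV_{g_k}$ and of $\mu_i^{-1}$; iterating through Lemma~\ref{basic observation} as in the proof of Theorem~\ref{decay}, the set of $i$ in the $H$-middle with $\mu_i<\epsilon_2/2$ forms sub-intervals adjacent to the two endpoints of the $H$-middle, on which $\int_{Q_i}|A_k|^2 dV_{g_k}$ decays geometrically toward the boundary. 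Take $\mathfrak{a}_k,\mathfrak{b}_k$ to be the inner endpoints of these sub-intervals; then $\lambda_i(H_k)\ge \epsilon_1$ and $\mu_i(f_k)\ge \epsilon_2/2$ both hold on $[\mathfrak{a}_k+1,\mathfrak{b}_k-1]$.

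\emph{Step 3: outer ends, summation, and deep middle.} On $[1,\mathfrak{a}_k^H]\cup[\mathfrak{b}_k^H,m_k]$, $\int|H_k|^2 dV_{g_k}$ is exponentially small in the distance from the $H$-middle while $\int|A_k|^2 dV_{g_k}\le \Theta_k$; hence $\int_Q|H_k|^2 dV_{g_k}\le \delta \int_Q|A_k|^2 dV_{g_k}$ on every sufficiently deep triple, and Lemma~\ref{3circle A} gives 3-circle decay of $\int_{Q_i}|A_k|^2 dV_{g_k}$ there. Combining the decays from Step~2 (in the inner part of $[1,\mathfrak{a}_k]$) with this one (in the outer part), $\sum_{i\le \mathfrak{a}_k}\|A_k\|_{L^2(Q_i)}^p$ and its right-end analogue are majorized by a geometric series whose leading term is of order $\Theta_k^{p/2}$, yielding the claimed $C(L,p,q)\Theta_k^{p/2}$ bound. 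For the deep middle, $\nu_i(H_k)\to 0$ is given by Theorem~\ref{decay}; expanding $E_i(H_k)=E_i^*+E_i^\dagger+2\langle \F_i(H_k),H_k-\F_i(H_k)\rangle_{L^2(e^{-2mt})}$ and bounding the cross term by $2\sqrt{E_i^* E_i^\dagger}=o(E_i^*)$ via Cauchy--Schwarz gives $E_i^*/E_i\to 1$, i.e.\ $\lambda_i(H_k)\to 1$.

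\emph{Main obstacle.} The delicate coordination is ensuring \eqref{Willmore cmpr. ass.} for Lemma~\ref{3-circle.A.mu}, which simultaneously requires $\lambda_i(H_k)\ge \epsilon_1$ and $\|\omega_k\|_{L^\infty}$ small enough for Lemma~\ref{gamma} to be effective; this forces the thresholds $(L,\delta_0,\delta_2,\delta_3,\epsilon_1,\epsilon_2)$ to be fixed in a specific order, and the sub-intervals produced on each side by the $H$-analysis and by the $\mu_i$-refinement must be glued into a single pair $(\mathfrak{a}_k,\mathfrak{b}_k)$ on which both lower bounds hold on the middle and both geometric decays combine cleanly on the ends.
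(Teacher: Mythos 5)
Your overall architecture coincides with the paper's: Theorem \ref{decay} applied to the mean-curvature equation (via Lemma \ref{equ. struc.}) produces the $H$-middle with $\lambda_i\ge\epsilon_1$ inside and exponential decay of $W(f_k,Q_i)$ outside; Lemma \ref{gamma} together with the smallness of $\nu_i$ in the deep $H$-middle verifies \eqref{Willmore cmpr. ass.}, so that Lemma \ref{3-circle.A.mu} can be iterated through Lemma \ref{basic observation} to show that $\set{i:\mu_i\ge\epsilon_2/2}$ is an interval, flanked by sub-intervals on which $\int_{Q_i}|A_k|^2dV_{g_k}$ decays geometrically; and the deduction $\nu_i\to 0\Rightarrow E_i^*/E_i\to1\Rightarrow\lambda_i\to1$ is exactly the paper's. (Minor point: the adjacent-cell comparability of $\int_{Q_i}|H_k|^2dV_{g_k}$ needs both $\gamma_{i+1}/\gamma_i\to1$ \emph{and} $E_i\approx E_i^*$, so it only holds at distance $\ge l$ from the ends of the $H$-middle; the $O(l)$ remaining cells are harmless since each contributes at most $C\Theta_k^{p/2}$.)

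There is, however, one genuine gap, in your treatment of the outer regions $[1,\mathfrak a_k^H]\cup[\mathfrak b_k^H,m_k]$. You claim that because $\int_Q|H_k|^2dV_{g_k}$ is exponentially small there while $\int_Q|A_k|^2dV_{g_k}\le C\Theta_k$, the hypothesis $\int_Q|H|^2dV_g\le\delta\int_Q|A|^2dV_g$ of Lemma \ref{3circle A} holds on every sufficiently deep triple. This inference is invalid: $C\Theta_k$ is only an \emph{upper} bound for $\int_Q|A_k|^2dV_{g_k}$, and nothing prevents $\int_Q|A_k|^2dV_{g_k}$ from itself being as small as (or comparable to) $\int_Q|H_k|^2dV_{g_k}$ on some cells, in which case the ratio hypothesis fails and the three-circle lemma cannot be invoked there. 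The paper's proof repairs exactly this with a cell-by-cell dichotomy: on each triple either $\int_Q|A_k|^2dV_{g_k}\le\frac1\delta\int_Q|H_k|^2dV_{g_k}$, in which case $\int_{Q_i}|A_k|^2dV_{g_k}\le C\delta^{-1}e^{-q'iL}\Theta_k$ directly and no three-circle step is needed, or else Lemma \ref{3circle A} applies and gives one-step geometric decay in one of the two directions, which is then propagated until the first alternative occurs or the boundary cell $Q_{a_k}$ is reached. This iteration yields the two-sided estimate $\int_{Q_i}|A_k|^2dV_{g_k}\le C\Theta_k(e^{-iq'L}+e^{-(a_k-i)q'L})$ on $[1,a_k]$ (and its mirror on the right), which is what actually makes $\sum_{i\le\mathfrak a_k}\|A_k\|^p_{L^2(Q_i)}\le C(L,p,q)\Theta_k^{p/2}$. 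With this dichotomy inserted in place of your blanket claim, your argument closes and agrees with the paper's.
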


\begin{proof}
The proof will be divided into several steps.\\

{\bf Step 1} By \eqref{WEF3C} and \eqref{eqn:smallomega}, we can apply Theorem \ref{decay} to $H_k$. Note that for $v_k$ as in A1), $L>1$  and  $\Theta_k=\sup_{t\in [0,m_kL-1]}\int_{[t,t+1]\times S^1}|A_k|^2\text{d}V_{g_k}$, we have 
$$
e^{2v_k(P_i)- 2\text{osc}_{Q_i} v_k}\int_{Q_i}e^{-2mt}|H_k|^2 dtd\theta\leq W(f_k,Q_i)\leq e^{2v_k(P_i)+2\text{osc}_{Q_i} v_k}\int_{Q_i}e^{-2mt}|H_k|^2 dtd\theta.
$$
 and
 $$\int_{Q_i}|A_k|^2dV_{g_k}\le C_L\Theta_k, \quad \forall 1\le i\le m_k.$$
 
We denote the $\mathfrak a$ and $\mathfrak b$ in Theoerm \ref{decay} by $a_k$ and $b_k$, then we have (for $k$ sufficiently large)
\begin{itemize}
\item[1)] $W(f_k,Q_i)\leq\left\{\begin{array}{ll}
                                                  Ce^{-q'iL}\Theta_k     &1\le i\le a_k\\
                                                  Ce^{-q'(m_k-i)L}\Theta_k   &b_k\le i\le m_k\end{array}\right.$
\item[2)] $\lambda_i(H_k)\geq\epsilon_1$ , for all $a_k+1\leq i\leq b_k-1$, and $\lambda_i(H_k)<\epsilon_1$ for all $i\le a_k$ or $i\ge b_k$
\item[3)] For $2<l<\frac{b_k-a_k}{4}$, we have
$$\nu_i(H_k)\leq 2e^{-q'(l-2)L/2}(1+\frac{1}{\epsilon_1^2}), \qquad \text{for all}\quad a_k+l<i<b_k-l.$$
\end{itemize}

{\bf Step 2.}{\it We show that 
\begin{equation}\label{eqn:decayA}
\int_{Q_i}|A_k|^2d\mu_k<
\left\{
\begin{array}{ll}
 C\Theta_k (e^{-iq'L}+e^{-(a_k-i)q'L})    &  \text{for} \s 1\le i\le a_k,\\
   C\Theta_k (e^{-q'(m_k-i)L}+e^{-(i-b_k)q'L})  & \text{for}\s b_k\le i\le m_k.
\end{array}
\right.
\end{equation} 
}
 For simplicity, we only discuss the case $1\le i\le a_k$. Note that \eqref{eqn:decayA} holds for $i=1$ and $i=a_k$ trivially. Hence, we can assume $1<i<a_k$. If
\begin{equation}\label{eqn:good}
\int_{Q_{i-1}\cup Q_i\cup Q_{i+1}}|A|^2 dV_g \leq \frac{1}{\delta}\int_{Q_{i-1}\cup Q_i\cup Q_{i+1}} \abs{H}^2 dV_g,
\end{equation}
for $\delta=\delta(q)$  as in Lemma \ref{3circle A}, then by item 1) of  Step 1, 
\begin{equation*}
\int_{Q_{i-1}\cup Q_i\cup Q_{i+1}}|A|^2 dV_g \leq \frac{e^{-q'iL}+e^{-q'(i-1)L}+e^{-q'(i+1)L}}{\delta}\cdot C\Theta_k\le C(q,L)e^{-iq'L}\Theta_k,
\end{equation*}
which implies \eqref{eqn:decayA}.
Hence, we may assume \eqref{eqn:good} is not true for $i$, then by Lemma \ref{3circle A} (for  $Q=Q_{i-1}\cup Q_i\cup Q_{i+1}$), we have either 
$\int_{Q_i}|A|^2 dV_g <e^{-q'L}\int_{Q_{i-1}}|A|^2 dV_g$ or $\int_{Q_i}|A|^2 dV_g<e^{-q'L}\int_{Q_{i+1}}|A|^2 dV_g$.
Without loss of generality, we assume the later one is true. Hence, the proof of \eqref{eqn:decayA} for $i$ is reduced to the proof of the same inequality with $i$ replaced by $i+1$. 

Now, we may repeat the above argument to consider $i+2$, $i+3$,$\cdots$, until either \eqref{eqn:good} holds for $i+l<a_k$ , or $i+l=a_k$. In the case $i+l<a_k$, we know 
\begin{align*}
\int_{Q_i}|A|^2d V_g\le \frac{e^{-lq'L}}{\delta}\int_{Q_{i+l-1}\cup Q_{i+l}\cup Q_{i+l+1}}|H|^2dV_g\le \frac{C\cdot e^{-q'(l+i+l-1)L}}{\delta}\Theta_k\le C\Theta_ke^{-iq'L}.
\end{align*}
In the case $i+l=a_k$, we know 
\begin{align*}
\int_{Q_i}|A|^2d V_g\le \frac{e^{-(a_k-i)q'L}}{\delta}\int_{Q_{a_k}}|A|^2dV_g\le C \Theta_ke^{-(a_k-i)q'L}.
\end{align*}
In both cases, we have \eqref{eqn:decayA} hold for $i$.

{\bf Step 3.} 
{\it There exist integers $l=l(q,L)$ such that if $b_k-a_k>2l+10$, then there exist $a_k',b_k'$ with $a_k+l<a_k'<b_k'<b_k-l$,  such that  
\begin{itemize}
\item [1)]  $e^{iqL}\int_{Q_i}|A_k|^2dV_{g_k}$ is  decreasing on $ [a_k+l+1,a_k']$;
\item [2)]  $e^{-iqL}\int_{Q_i}|A_k|^2dV_{g_k}$ is increasing on $ [b_k',b_k-l-1]$;
\item [3)] $\mu_i\geq \frac{\epsilon_2}{2}$ for any $a_k'< i< b_k'$.\\
\end{itemize}}

The proof of Step 3 uses Lemma \ref{3-circle.A.mu}. In order to verify the assumption \eqref{Willmore cmpr. ass.}, we need to prove the following claim first, in which we determined the constant $l(q,L)$.

{\bf Claim A.} There exists $l(q,L)$ such that \eqref{Willmore cmpr. ass.} holds for all $i\in [a_k+l+1,b_k-l-1]$ and sufficiently large $k$.

By item $2)$ of Step 1 and Lemma \ref{gamma}, we know for any $i=a_k+1,\cdots ,b_k-2$, there holds 
\begin{align}\label{E_i^* equal}
\frac{E^*_{i+1}(H_k)}{E^*_i(H_k)}=\frac{\gamma_{i+1}^2}{\gamma_i^2}\in [1-C\frac{\vartheta}{\epsilon_1}, 1+C\frac{\vartheta}{\epsilon_1}],
\end{align}
where
\[
\vartheta:= \norm{\omega_k}_{L^\infty}\qquad \text{in \eqref{WEF3C}}
\]
which can be as small as we need when $k$ is sufficiently large.
By item $3)$ of Step 1, we know for $a_k+l< i< b_k-l$, there holds 
\begin{align*}
|\sqrt{E_i(H_k)}-\sqrt{E^*_i(H_k)}|\le \sqrt{E^{\dagger}_i(H_k)}\le 2e^{-q'(l-2)L/2}(1+\frac{1}{\epsilon_1^2}) \sqrt{E^*_i(H_k)}.
\end{align*}
For some $l=l(q,L)$ to be determined,  the above inequality implies
\begin{align}\label{E_i equal E_i^*}
\frac{E^*_i(H_k)}{E_i(H_k)}\in (1-Ce^{-q'lL/2}(1+\frac{1}{\epsilon_1^2}),1+Ce^{-q'lL/2}(1+\frac{1}{\epsilon_1^2}))
\end{align}
for $a_k+l< i< b_k-l$ and $k\gg 1$.  Combining \eqref{E_i^* equal}, \eqref{E_i equal E_i^*}  and  assumption $A 1)$,   we know  for $k$ large enough and $a_k+l< i< b_k-l -1$, there holds 
\begin{align}\label{comp.}
\frac{\int_{Q_i}|H_k|^2\text{d}V_{g_k}}{\int_{Q_{i+1}}|H_k|^2\text{d}V_{g_k}}&\le (1+C\|\nabla v_k\|_{L^\infty}) \frac{E_i(H_k)}{E_{i+1}(H_k)}\\
&\le (1+C\|\nabla v_k\|_{L^\infty}+ Ce^{-q'lL/2}(1+\frac{1}{\epsilon_1^2})+ C \frac{\vartheta}{\epsilon_1})< \frac{3}{2}. 
\end{align}
For the last inequality, we choose $l$ (depending on $q$ and $L$) such that
\[
Ce^{-q'lL/2}(1+\frac{1}{\epsilon_1^2})<\frac{1}{10}.
\]
 Similar argument implies $\frac{\int_{Q_i}|H_k|^2\text{d}V_{g_k}}{\int_{Q_{i+1}}|H_k|^2\text{d}V_{g_k}}> \frac{2}{3}$. So, we have verified \eqref{Willmore cmpr. ass.} for $i\in [a_k+l+1,b_k-l-1]$. This is the proof for Claim A.

In what follows, we assume $b_k-a_k> 2l+10$ and consider the set 
$$J_k=\{a_k+l+1<j<b_k-l-1| \mu_j(f_k)\ge \frac{\epsilon_2}{2}\}$$
for $k\gg 1$.  Note that given Claim A, if $j\notin J_k$, Lemma \ref{3-circle.A.mu} is valid for $Q_{j-1}\cup Q_j\cup Q_{j+1}$.

We claim

{\bf Claim B.} either $J=\emptyset$ or there exist $a_k+l+2\le \alpha_k\le \beta_k\le b_k-l-2$ such that 
\begin{align}\label{connectness 2.}
J_k=[\alpha_k,\beta_k]. 
\end{align}

Assume $J$ is not empty, otherwise there is nothing to prove. If $J_k=[a_k+l+2,b_k-l-2]$, there is still nothing to prove because we may take $a_k'=a_k+l+1$ and $b_k'=b_k-l-1$ so that Part 1) and 2) in Step 3 become trivial and Part 3) holds by the definition of $J_k$. 

In other cases, take any $j\in [a_k+l+1,b_k-l-1]$ and $j\notin J_k$. Since $f_k$ satisfies A1) and we have verified \eqref{Willmore cmpr. ass.} (see Claim A) for $i\in [a_k+l+1,b_k-l-1]$ and $k\gg 1$, by  Lemma \ref{3-circle.A.mu}, $\frac{1}{\mu_i}$ satisfied  \eqref{eqn:qphi} on $Q_{j-1}\cup Q_j\cup Q_{j+1}$. Note that $\lambda_i(H_k)\ge \epsilon_1>0$ implies $\int_{Q_i}|A_k|^2dV_{g_k}\ge \int_{Q_i}|H_k|^2dV_{g_k}>0$ and hence $\mu_i(f_k)>0$ for any $i\in [a_k+1,b_k-1]$ and $k\gg 1$. So, by Lemma \ref{basic observation}, we have
$$
\text{either}\quad  \frac{1}{\mu_j}\le e^{-q'L}\frac{1}{\mu_{j-1}}<\infty,\s \text{or}\s \frac{1}{\mu_j}\le e^{-q'L}\frac{1}{\mu_{j+1}}<\infty,
$$
where $q'=q-\frac{\log 2}{L}$. 

When $\frac{1}{\mu_{j}}\le e^{-q'L}\frac{1}{\mu_{j-1}}<+\infty$,  by Lemma \ref{3-circle.A.mu},  $\frac{1}{\mu_i}$ also satisfies  \eqref{eqn:qphi} on $Q_{j-2}\cup Q_{j-1}\cup Q_j$, we have
$$
\frac{1}{\mu_{j-1}}\le e^{-q'L}\frac{1}{\mu_{j-2}},
$$
which implies that
$$
\mu_{j-2}< \mu_{j-1}<\mu_j<\frac{\epsilon_2}{2}.
$$
Then $\frac{1}{\mu_i}$ also satisfies  \eqref{eqn:qphi} on $Q_{j-3}\cup Q_{j-2}\cup Q_j$. We repeat the above argument until we obtain 
\begin{align}\label{incr. to. left.}
\frac{e^{q'Li}}{\mu_i(f_k)}\text{ is decreasing on }[a_k+l+1,j] \text{ and } \mu_i<\frac{\epsilon_2}{2}, \forall i\in [a_k+l+1, j].
\end{align}
 Moreover, by Lemma \ref{3-circle.A.mu} again,  we also have $\{\int_{Q_i}|A_k|^2dV_{g_k}\}$ satisfies  \eqref{eqn:qphi} on $\cup_{a_k-l+1\le i\le j} Q_i$. We are going to show the assertion \eqref{same direct.}. To see this, we start with the part $Q_{j-1}\cup Q_j\cup Q_{j+1}$. As above, Lemma \ref{3-circle.A.mu} and Lemma \ref{basic observation} implies that either
 \[
\int_{Q_j}|A_k|dV_{g_k}\le e^{-q'L}\int_{Q_{j+1}}|A_k|^2dV_{g_k}
 \]
 or
 \[
\int_{Q_j}|A_k|dV_{g_k}\le e^{-q'L}\int_{Q_{j-1}}|A_k|^2dV_{g_k}.
 \]
We would like to rule out the first possibility.  If $\int_{Q_j}|A_k|dV_{g_k}\le e^{-q'L}\int_{Q_{j+1}}|A_k|^2dV_{g_k}$, by \eqref{Willmore cmpr. ass.} which we just verified, we get $\mu_{j+1}\le 2e^{-q'L}\mu_j<\frac{\epsilon_0}{2}$. Applying Lemma \ref{3-circle.A.mu} repeatedly will imply that $\mu_i\le \frac{\epsilon_0}{2}$ for $i\in [j,b_k-l-1]$. This together with \eqref{incr. to. left.} contradicts the assumption  $J_k\neq \emptyset$. Therefore, we have  $\int_{Q_j}|A_k|^2dV_{g_k}\le e^{-q'L}\int_{Q_{j-1}}|A_k|^2dV_{g_k}$ and hence
 \begin{align}\label{same direct.}
 e^{iqL}\int_{Q_i}|A_k|^2dV_{g_k} \text{ is  decreasing on } [a_k+l+1,j]. 
 \end{align}

In the same way, when $\frac{1}{\mu_{j}}<e^{-q'L}\frac{1}{\mu_{j+1}}$, we have 
\begin{align}\label{incr. to. right.}
\frac{e^{iqL}}{\mu_i}\text{ and } e^{-iqL}\int_{Q_i}|A_k|^2dV_{g_k}  \text{ are  increasing on } [j,b_k-l-1] \text{ and } \mu_i
<\frac{\epsilon_2}{2},  \forall i\in [j,b_k-l-1].
\end{align}
As a result, Claim B follows from \eqref{incr. to. left.} and \eqref{incr. to. right.}.  

Now we finish the proof of Step 3. When $J_k\neq \emptyset$, by setting $a_k'=\alpha_k-1$ and $b_k'=\beta_k+1$, we find that the claims in Step 3. follow from \eqref{same direct.} and \eqref{incr. to. right.}.  When $J_k=\emptyset$, we know $\mu_i<\frac{\epsilon_0}{2}$ for any $i\in [a_k+l+1, b_k-l-1]$,  by Lemma \ref{3-circle.A.mu},  there exists $i_0\in [a_k+l+1, b_k-l-1]$ such that $e^{iqL}\int_{Q_i}|A_k|^2dV_{g_k}$ is   decreasing on $ [a_k+l+1,i_0]$ and $e^{-iqL}\int_{Q_i}|A_k|^2dV_{g_k}$ is increasing  on $[i_0+1,b_k-l-1]$. In this case, set $a_k'=i_0$ and $b_k'=i_0+1$ and the proof of Step 3 is done.\\

{\bf Step 4.} We complete the proof of Proposition \ref{weak version} in this step. Let $l=l(q,L)$ be chosen as in Step 3. If $b_k-a_k\le 2l+10$, then by Step 2, we know 
\begin{align*}
&\sum_{i=1}^{m_k}\|A_k\|_{L^2(Q_i)}^{p}\\
&\le C \Theta_k^{\frac{p}{2}}\left(\sum_{i=1}^{a_k}(e^{-pq'iL/2}+e^{-pq'(a_k-i)L/2})+2l+10+\sum_{i=b_k}^{m_k} (e^{-pq'(m_k-i)L/2}+e^{-(i-b_k)pq'L/2})\right)\\
&\le C(p,q,L)\Theta_k^{\frac{p}{2}}. 
\end{align*}
Proposition \ref{weak version} holds trivially by setting $\mathfrak a_k=a_k$ and $\mathfrak b_k=a_k+1$.  

Next, we consider the case $b_k-a_k>2l+10$. By Step 3, we know $e^{iq'L}\int_{Q_i}|A_k|^2$ is decreasing on $[a_k+l+1,a_k']$. Then we have
$$
\int_{Q_i}|A_k|^2d\mu_k\leq e^{-q'(i-(a_k+l+1))L}\Theta_k, \forall i\in [a_k+l+1,a_k'],
$$
which implies that
\begin{eqnarray*}
\sum_{i=1}^{a_k'}\|A_k\|_{L^2(Q_i)}^p&\leq& C \Theta_k^\frac{p}{2}\left(\sum_{i=1}^{a_k}(e^{-pq'iL/2}+e^{-pq'(a_k-i)L/2})+\sum_{i=a_k+1}^{a_k+l}1+\sum_{i=a_k+l+1}^{a_k'}e^{-pq'(i-(a_k+l))L/2}\right)\\
&\leq& C(p,q,L)\Theta_k^\frac{p}{2}.    
\end{eqnarray*}
For similar reason, we also have 
$$
\sum_{i=b_k'}^{m_k}\|A_k\|_{L^2(Q_i)}^p\leq C\Theta_k^\frac{p}{2}.
$$
By taking $\mathfrak a_k=a_k'$ and $\mathfrak b_k=b_k'$ and applying item 3) of Step 1, item 3) of Step 3 and  and \eqref{E_i equal E_i^*},  we complete the proof. 
\end{proof}

In Step 3 of the above proof, we have verified the comparison assumption \eqref{Willmore cmpr. ass.} of the Willmore energy on nearby segments in $\bigcup_{i \in [\mathfrak a_k, \mathfrak b_k]} Q_i$. We summarize it in the form of a corollary for later use.

\begin{cor}\label{comparison}
Let $f_k$ be a conformal and Willmore immersion from $[0,m_kL]\times S^1$
into $\R^n$ satisfying A1) and  $0<\mathfrak a_k<\mathfrak b_k<m_k$ chosen as in Proposition \ref{weak version}. Then, for $k\gg 1$ and $\mathfrak a_k< i< \mathfrak b_k$ there holds  $\mu_i\ge \frac{\epsilon_2}{2}$, $\frac{1}{2}\le \lambda_i(H_k)\le 2$ and 
\begin{align}
\frac{2}{3}\int_{Q_i}|H_k|^2dV_{g}\leq \int_{Q_{i+1}}|H_k|^2dV_{g}<\frac{3}{2}\int_{Q_{i}}|H_k|^2dV_{g}.
\end{align}
\end{cor}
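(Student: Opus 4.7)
The plan is to observe that all three assertions in Corollary \ref{comparison} have essentially been verified inside the proof of Proposition \ref{weak version}, once we identify $\mathfrak a_k=a_k'$ and $\mathfrak b_k=b_k'$ as chosen in Step 3 there. Indeed, by construction $[\mathfrak a_k,\mathfrak b_k]\subset [a_k+l+1,b_k-l-1]$, the range on which Claim A of Step 3 applies, and also $[\mathfrak a_k+1,\mathfrak b_k-1]\subset [a_k+1,b_k-1]$, the range controlled by Step 1. So the proof is mainly a matter of extracting and collecting these estimates into a single statement; no essentially new argument is required.

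Concretely, I would verify the three claims in order. The bound $\mu_i\ge \epsilon_2/2$ for $\mathfrak a_k<i<\mathfrak b_k$ is literally the content of part 3) of Step 3 (either via the definition of $J_k=[\alpha_k,\beta_k]$ with $\mathfrak a_k=\alpha_k-1$, $\mathfrak b_k=\beta_k+1$, or via the trivial branch when $J_k=\emptyset$). The double inequality
\[
\tfrac{2}{3}\int_{Q_i}|H_k|^2\, dV_{g_k}\le \int_{Q_{i+1}}|H_k|^2\, dV_{g_k}\le \tfrac{3}{2}\int_{Q_i}|H_k|^2\, dV_{g_k}
\]
is exactly Claim A of Step 3, whose proof uses Lemma \ref{gamma} to control $E_{i+1}^*/E_i^*$ via \eqref{E_i^* equal}, uses the smallness of $\nu_i$ from Step 1 item 3) to control $E_i^*/E_i$ via \eqref{E_i equal E_i^*}, and uses assumption A1) to relate $\int_{Q_i}|H_k|^2\, dV_{g_k}$ to $E_i(H_k)$ up to $e^{2v_k(P_i)}$-factors that are close to $1$. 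I would simply record \eqref{comp.} (and its mirror lower-bound version derived by the same argument) as the conclusion.

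Finally, for the bound $1/2\le \lambda_i(H_k)\le 2$, the lower bound $\lambda_i\ge \epsilon_1$ is granted by Step 1 item 2). To upgrade to the stated bound, I would invoke the definition $\lambda_i^2=E_i^*(H_k)/E_i(H_k)$ from Theorem \ref{main1}'s setup together with \eqref{E_i equal E_i^*}: for $a_k+l<i<b_k-l$ one has
\[
\frac{E_i^*(H_k)}{E_i(H_k)}\in\bigl(1-Ce^{-q'lL/2}(1+\epsilon_1^{-2}),\,1+Ce^{-q'lL/2}(1+\epsilon_1^{-2})\bigr),
\]
and since $l=l(q,L)$ was fixed in Step 3 so that $Ce^{-q'lL/2}(1+\epsilon_1^{-2})<\tfrac{1}{10}$, we get $\lambda_i\in [1/2,2]$ uniformly on this range once $k$ is large enough that the error terms from $\|\omega_k\|_{L^\infty}$ and $\|\nabla v_k\|_{L^\infty}$ are also absorbed. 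The only subtle point, which I would flag as the main (very mild) obstacle, is ensuring that the tolerance $Ce^{-q'lL/2}(1+\epsilon_1^{-2})$ chosen in Step 3 is indeed compatible with the factor of $2$ slack in the corollary; since $l$ has already been fixed so that the comparison \eqref{comp.} holds with ratio $3/2$, the same $l$ automatically suffices here.
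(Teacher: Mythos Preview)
Your proposal is correct and follows essentially the same approach as the paper's own proof, which consists of a single sentence citing item 3) of Step 3 together with \eqref{E_i equal E_i^*} and \eqref{comp.}. You have simply unpacked these references in more detail, correctly identifying that $\mathfrak a_k=a_k'$, $\mathfrak b_k=b_k'$ lie in the range $[a_k+l+1,b_k-l-1]$ where Claim A and \eqref{E_i equal E_i^*} apply.
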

\begin{proof} By item 3) of Step 3 and  \eqref{E_i equal E_i^*} \eqref{comp.} in the proof of Step 3. 
 \end{proof}

\subsection{Proof of Theorem \ref{neck.first.order}}
\begin{proof}[Proof of Theorem \ref{neck.first.order}]
1) First, we consider the convergence of $\hat H_k/\hat\epsilon_k$. As mentioned in the introduction, for a suitable choice of orthonormal basis in $\R^n$, $\hat f_k$ converges to \eqref{eqn:finfinity}. Denote this orthonormal basis by $(e_1,\cdots, e_n)$.

For any fixed $k_0$, since by Proposition \ref{weak version}, $\nu_i(\hat H_k)\rightarrow 0$ on $Q(k_0):=[-k_0L,k_0L]\times S^1$,  we have
$$
\frac{\int_{Q_i}|\mathcal{F}_i(\hat H_k)|^2e^{-2mt}dtd\theta}{\int_{Q_i}|\hat H_k|^2e^{-2mt}dtd\theta}
\rightarrow 1.
$$
Then, by Lemma \ref{gamma}, we have
$$
\int_{Q(k_0)}|\hat{H}_k|^2e^{-2mt} dtd\theta\leq
C\int_{Q_i}|\hat H_k|^2e^{-2mt} dtd\theta.
$$
Then we get
$$
\int_{Q(k_0)}|\frac{\hat H_k}{\hat\epsilon_k}|^2 dtd\theta<C,
$$
which implies that $\frac{\hat H_k}{\hat\epsilon_k}$
converges to a harmonic vector function $h$ with
$$
h\cdot e_1=h\cdot e_2=0,\s h=\mathcal{F}_1(h).
$$
Then we may write
$$
h=v_1e^{mt}\cos m\theta+v_2 e^{mt}\sin m\theta,
$$
where $v_1$ and $v_2$ are constant vectors perpendicular to $e_1$ and $e_2$.
By \eqref{def.hat.epsilon}, we have
\[
\int_{Q_1} h^2 e^{-2mt} dtd\theta = \pi L,
\]
from which we derive that $\abs{v_1}^2+ \abs{v_2}^2 =1$.

2)
Next, we consider the convergence of $\hat \n_k=e^{-2\hat u_k}\partial_t\hat f_k\wedge \partial_\theta \hat f_k\to e^{2mt}f_t\wedge f_\theta=-e_1\wedge e_2$.
Due to the equation of the Gauss map $\hat{\n}_k$ (see \eqref{tension.G}), we have
\begin{align}\label{equation for the Gauss map}
\Delta \frac{\hat{\n}_k-\bar{\n}_k}{\hat \epsilon_k}
=A_{G(2,n)}(d\hat \n_k,d\hat \n_k/\hat\epsilon_k)+\frac{\nabla_t^\perp \hat H_{k}}{\hat\epsilon_k}\wedge \hat f_{k,\theta}-\frac{\nabla_\theta^\perp \hat H_{k}}{\hat\epsilon_k}\wedge \hat f_{k,t}.
\end{align}
Noting that $\int_{Q_i}|\hat{A}_k|^2_{\hat{g}_k}e^{2\hat{u_k}}dtd\theta\le\frac{2}{\epsilon_2}  \int_{Q_i}|\hat{H}_k|^2e^{2\hat{u}_k}dtd\theta$ ,by the $\epsilon$-regularity of Willmore equation we know that $\|\hat{A}_k\|_{L^\infty(Q_i)}\le C \|\hat{A}_k\|_{L^2(Q_{i-1}\cup Q_i\cup Q_{i+1})}$, the first term in the right hand side converges to zero in $C^\infty_{loc}(\mathbb{R}\times S^1)$. Similarly, the limit of the covariant derivatives in the second and the third term is the same as the limit of the partial derivatives.  

Recall that $h$ and $f_\infty$ are the limit of $\hat H_k/\hat \epsilon_k$ and $\hat f_k$ respectively. By \eqref{eqn:finfinity} and Part 1) above, we have
\begin{align*}
h_t=me^{mt}(v_1\cos m\theta+v_2\sin m\theta), \s  &h_\theta=me^{mt}(-v_1\sin m\theta+v_2\cos m\theta) \\
f_{\infty,t} =-e^{-mt}(\cos m\theta e_1+\sin m\theta e_2), \s  &f_{\infty,\theta}=e^{-mt}(-\sin m\theta e_1+\cos m\theta e_2).
\end{align*}
Then $\frac{\nabla_t^\perp \hat H_{k}}{\hat\epsilon_k}\wedge \hat f_{k,\theta}-\frac{\nabla_\theta^\perp \hat H_{k}}{\hat\epsilon_k}\wedge \hat f_{k,t}$ converges smoothly locally to 
\begin{align*}
h_t\wedge f_{\infty,\theta} -h_\theta\wedge f_{\infty,t}=m\left[\cos 2m\theta(v_1\wedge e_2+v_2\wedge e_1)-\sin 2m\theta (v_1\wedge e_1-v_2\wedge e_2)\right].
\end{align*}

Since $\int_{Q_i}|\nabla \hat \n_k|^2dtd\theta\rightarrow 0$,  $|\nabla\hat  \n_k|$ converges to 0 in $C^\infty_{loc}(\R \times S^1)$.
By the Poincar\'e inequality,
$\|\hat \n_k-\bar{\n}_k\|_{W^{1,2}([-k_0L,k_0L]\times S^1)}\leq C(k_0)\hat\epsilon_k$. Then we may assume $\frac{\n_k-\bar{\n}_k}{\hat\epsilon_k}$
converges to a vector function $v$ weakly in $C^\infty_{loc}(\R\times S^1)$, satisfying (as the limit of \eqref{equation for the Gauss map})
$$
\Delta v=m[\cos 2m\theta(v_1\wedge e_2+v_2\wedge e_1)-\sin 2m\theta (v_1\wedge e_1-v_2\wedge e_2)]
$$
and 
$$\int_{Q_i}|\nabla v|^2dtd\theta\leq C\int_{Q_i}h^2e^{-2mt}dtd\theta<C.
$$
It is easy to check that $v'$, defined by
$$
v'=-\frac{1}{4m}\big(\cos 2m\theta(v_1\wedge e_2+v_2\wedge e_1)-\sin 2m\theta (v_1\wedge e_1-v_2\wedge e_2)\big)
$$
satisfies the same equation.
If $w=v-v'$, then $w$ is a harmonic function satisfying
\begin{equation}
    \label{eqn:goodw}
 \sup_{i}\int_{Q_i}|\nabla w|^2 dtd\theta\le C<\infty.
\end{equation}
By the well-known expansion of harmonic functions and \eqref{eqn:goodw} , there exist constants $a, a^{i\alpha}, a^{\alpha\beta}$ and $b$ such that
$$
w=\big(ae_1\wedge e_2+a^{i\alpha}e_i\wedge e_\alpha +a^{\alpha\beta}e_\alpha\wedge e_\beta\big) t+b.
$$
Here $i=1,2$, $\alpha,\beta=3,4,\cdots,n$.

Obviously, $\frac{\partial \hat{\n}_k}{\partial t}$ is in the tangent space of $G(2,n)$ at $e^{-2\hat u_k}\hat f_{k,t}\wedge \hat f_{k,\theta}$. Taking the limit, we know that $v_t$ is in the tangent space of $G(2,n)$ at $-e_1\wedge e_2$, which implies that
\[
a=0; \quad a^{\alpha\beta}=0.
\]
By \eqref{der.G}, using the interior product formula 
\begin{equation}
    \label{eqn:interior}
V\wedge U\lrcorner W  = (V\cdot W) U - (U\cdot W)V,
\end{equation}
we have
\[
\hat \n_{k,t}\lrcorner \hat f_{k,t} = - \hat \n_{k,\theta}\lrcorner \hat f_{k,\theta}.
\]
Taking the limit again,
\begin{equation}
    \label{eqn:further}
v_t\lrcorner f_{\infty,t} = - v_{\theta}\lrcorner f_{\infty,\theta}.
\end{equation}
Since $v=v'+w$, using the explicit formula for $w$ and $v'$, we have
\begin{eqnarray*}
	v_t\lrcorner f_{\infty,t} &=&( a^{i\alpha}e_i\wedge e_\alpha) \lrcorner (-e^{-mt}(\cos m\theta e_1+\sin m\theta e_2))\\
 &=&  - e^{-mt} (a^{1\alpha}\cos m\theta + a^{2\alpha}\sin m\theta)e^\alpha 
 \end{eqnarray*}
 and 
 \begin{eqnarray*}
v_\theta\lrcorner f_{\infty,\theta}&=& \frac{1}{2}\big(\sin 2m\theta(v_1\wedge e_2+v_2\wedge e_1)+\cos 2m\theta(v_1\wedge e_1-v_2\wedge e_2)\big) \\
&&\lrcorner\big(e^{-mt}(-\sin m\theta e_1 +\cos m\theta e_2)\big)\\
&=&\frac{1}{2}e^{-mt}(-\sin m\theta v_1 + \cos m\theta v_2).
\end{eqnarray*}
Then we derive from \eqref{eqn:further} that
$$
a^{1\alpha}=\frac{1}{2}\langle v_2,e_\alpha\rangle, \s a^{2\alpha}=-\frac{1}{2}\langle v_1,e_\alpha\rangle.
$$
Finally, by the definition of $\bar \n_k$, we have $\int_0^{2\pi}v(0,\theta)d\theta=0$, which implies that $b=0$. In summary, 
\begin{eqnarray*}
v&=&\frac{1}{2}(e_1\wedge v_2-e_2\wedge v_1)t- \frac{1}{4m}\big(\cos 2m\theta(v_1\wedge e_2+v_2\wedge e_1)\\
&&-\sin 2m\theta (v_1\wedge e_1-v_2\wedge e_2)\big).   
\end{eqnarray*}

3)
Taking the interior product (see \eqref{eqn:interior}) of both sides of \eqref{der.G} with $\partial_j f$, we obtain
\[
\hat A_{k,i2} =  \frac{\partial \hat \n_k}{\partial x^i}\lrcorner \partial_1 f_k ; \quad \hat A_{k,i1} = - \frac{\partial \hat \n_k}{\partial x^i}\lrcorner \partial_2 f_k .
\]

Denote by $A_{tt},  A_{t\theta}, A_{\theta\theta}$ the limit of $\frac{\hat A_{k,11}}{\epsilon_k}, \frac{\hat A_{k,12}}{\epsilon_k}, \frac{\hat A_{k,22}}{\epsilon_k}$ respectively. By our definition of $v$ and $f_\infty$,
\begin{align*}
A_{tt}=-  v_t\lrcorner f_{\infty,\theta} ,  \s
A_{t\theta}= A_{\theta t}= v_t\lrcorner f_{\infty,t},  \s A_{\theta \theta}=v_\theta\lrcorner f_{\infty,t}. 
\end{align*}
By \eqref{eqn:limitv}, we have
\begin{align*}
v_t&=\frac{1}{2}(e_1\wedge v_2-e_2\wedge v_1)\\
v_\theta&=\frac{1}{2}(\sin 2m\theta (v_1\wedge e_2+v_2\wedge e_1)+\cos 2m\theta(v_1\wedge e_1-v_2\wedge e_2)),
\end{align*}
which implies that
\begin{align*}
A_{tt}&=- \frac{1}{2}(e_1\wedge v_2-e_2\wedge v_1)\lrcorner e^{-mt}(-\sin m\theta e_1+\cos m\theta e_2)\\
&=\frac{1}{2}e^{-mt}(v_1\cos m\theta+v_2\sin m\theta),
\end{align*}
\begin{align*}
A_{\theta\theta}&= \frac{1}{2}(\sin 2m\theta(v_1\wedge e_2+v_2\wedge e_1)+\cos 2m\theta(v_1\wedge e_1-v_2\wedge e_2))\lrcorner (-e^{-mt}(\cos m\theta e_1+\sin m\theta e_2))\\
&=\frac{e^{-mt}}{2}\big(v_1(\sin 2m\theta\sin m\theta+\cos 2m\theta\cos m\theta)+v_2(\sin 2m\theta\cos m\theta-\cos 2m\theta\sin m\theta)\big)\\
&=\frac{1}{2}e^{-mt}(v_1\cos m\theta+v_2\sin m\theta),
\end{align*}
and 
\begin{align*}
A_{t\theta}&=A_{\theta t}=- \frac{1}{2}(e_1\wedge v_2-e_2\wedge v_1)\lrcorner e^{-mt}(\cos m\theta e_1+\sin m\theta e_2)\\
&=\frac{1}{2}e^{-mt}(v_1\sin m\theta-v_2\cos m\theta).
\end{align*}

4) The asymptotic behavior of Gauss curvature is
\begin{eqnarray*}
\lim_{k\rightarrow+\infty}\frac{\hat K_k}{\epsilon_k^2}&=&
\lim_{k\rightarrow+\infty}e^{-4\hat u_k}\frac{\hat A_{k,tt}\cdot\hat A_{k,\theta\theta}-|\hat A_{k,t\theta}|^2}{\epsilon_k^2}\\
&=&\frac{e^{2mt}}{4}(|v_1\cos m\theta+v_2\sin m\theta|^2-|v_2\cos m\theta-v_1\sin  m\theta|^2)\\
&=& \frac{e^{2mt}}{4}\big( (|v_1|^2-|v_2|^2)\cos 2m\theta+2\langle v_1, v_2\rangle \sin 2m\theta\big).
\end{eqnarray*}

The normal curvature 
\begin{align*}
\lim_{k\to\infty}\frac{\hat K^{\bot}_k}{\epsilon_k^2}&=\lim_{k\to\infty }e^{-2\hat u_k} \frac{\hat A_{k,1i}\wedge \hat A_{k,2i}}{\epsilon_k^2}\\
&=e^{2mt}(A_{tt}\wedge A_{\theta t}+A_{t\theta}\wedge A_{\theta\theta})=0.
\end{align*}

5)
Lastly, we study the limit of $\tau_2(f_k,S)/\hat\epsilon_k$. Recall that given $i_k$ as in the assumptions, we have $c_k^{i_k}$, $f_k^{i_k*}$ defined in the introduction (see \eqref{eqn:fki}).

On one hand, $\hat f_k$ is obtained from $f_k$ by a scaling and translation. By Lemma \ref{transformation},
\[
\tau_2(f_k,S)= \tau_2(\hat f_k,S) + \tau_1(\hat f_k, e^{-c_k^{i_k}} S(f_k^{i_k*})).
\]
By Lemma \ref{neck.first.order} and the definition of $\tau_1$,
\[
\frac{1}{\epsilon_k}\norm{\tau_1(\hat f_k, e^{-c_k^{i_k}} S(f_k^{i_k*}))} \leq C(S) \abs{e^{-c_k^{i_k}} f_k^{i_k*}}.
\]
The right hand side above converges to zero by Corollary \ref{cor:lambdak}. Hence,
\[
\lim_{k\to \infty} \frac{1}{\epsilon_k} {\tau_2(f_k,S)}=\lim_{k\to \infty} \frac{1}{\epsilon_k} {\tau_2(\hat f_k,S)}.
\]
To compute the limit in the right hand side, we use Lemma \ref{neck.first.order} again
\begin{eqnarray*}
 \lim_{k\to \infty}\frac{1}{\hat\epsilon_k}\tau_2(f_k,S)
 &=&2\int_{\set{t}\times S^1} (h\cdot \partial_t (S f_\infty) - \partial_t h\cdot S f_\infty) d\theta\\
 &=&
-4\int_0^{2\pi}(v_1\cos m\theta+v_2\sin m\theta)\cdot(a^{i1}\cos m\theta e_i+ a^{2i}\sin m\theta)e_i d\theta\\
&=&-4\pi( v_1\cdot\sum_{i=3}^na^{i1}e_i+v_2\cdot\sum_{i=3}^na^{i2}e_i)\\
&=&-4\pi\mathbb{L}_{v_1.v_2}(S), 
\end{eqnarray*}
where $(a^{ij})_{i,j=1,\cdots,n}$ is the skew-symmetric matrix that represents $S$. Therefore,
$$
\frac{\|\tau_2(f_k,\cdot)\|}{\hat\epsilon_k}\rightarrow 2\sqrt{2}\pi.
$$
\end{proof}

Now, we can complete the proof of Theorem \ref{main1}.
\begin{proof}[Proof of Theorem \ref{main1}] 
We only need to show that 
$$
\lim_{m\rightarrow+\infty}\lim_{k\rightarrow+\infty}\max_{\mathfrak a_k+m<i<\mathfrak b_k-m}\mu_i=1.
$$
Assume this is not true, then we can find $i_k$, such that $i_k-\mathfrak a_k$, $\mathfrak b_k-i_k\rightarrow+\infty$, such that $\mu_{i_k}(f_k)\rightarrow a\neq 1$. 

However, by Theorem \ref{neck.first.order}, we compute
\begin{align*}
\lim_{k\to +\infty}\frac{|\hat A_k|^2_{\hat{g}_k}}{\epsilon_k^2}&=\lim_{k\to+\infty}e^{-4u_k}\frac{|\hat A_{k,tt}|^2+|\hat A_{k,\theta\theta}|^2+2|\hat A_{k,t\theta}|^2}{\epsilon_k^2}\\
&=\frac{e^{2mt}}{2}(|v_1\cos m\theta+v_2\sin m\theta|^2+|v_2\cos m\theta-v_1\sin  m\theta|^2)\\
&=\frac{e^{2mt}}{2}.
\end{align*}
By the definition of $\mu_i$,
\begin{align*}
\lim_{k\to +\infty}\mu_i(\hat{f}_k)=\lim_{k\to +\infty}\frac{\int_{Q_i}|\hat H_k|^2dV_{\hat{g}_k}}{\int_{Q_i}|\hat A_k|^2_{\hat{g}_k}dV_{\hat g_k}}=\frac{2}{|Q_i|}\int_{Q_i}|v_1\cos m\theta+v_2\sin m\theta|^2dtd\theta=1.
\end{align*}
This is a contradiction.
\end{proof}

\section{Higher order expansions of the neck}
\label{sec:higher}
In this section, we apply the three circles theorem to equations of $f_k$ and $H_k$ respecttively to prove Theorem \ref{main2} and Theorem \ref{2nd.asymp.}.

\subsection{Proof of Theorem \ref{main2}}

\begin{proof}
First, recall the equation $\Delta f_k=e^{2u_k}H_k$. We are about to apply Corollary \ref{decay.linear2} to it. In order to check the assumption there, we note that
\[
\int_{Q_i}|e^{2u_k}H_k|^2e^{2mt} dtd\theta = \int_{Q_i} \abs{H_k}^2 e^{2u_k} e^{2v_k} dtd\theta.
\]
By A1), for sufficiently large $k$, the oscillation of $v_k$ on $Q_{i-1}\cup Q_i\cup Q_{i+1}$ is as small as we need.
By Corollary \ref{comparison}, we derive that for $\mathfrak a_k\le i<\mathfrak b_k$ and for $k$ large,
\[
\frac{1}{2}\int_{Q_{i-1}}|e^{2u_k}H_k|^2e^{2mt} dtd\theta\leq \int_{Q_i}|e^{2u_k}H_k|^2e^{2mt} dtd\theta\leq 2 \int_{Q_{i+1}}|e^{2u_k}H_k|^2e^{2mt} dtd\theta.
\]
Then Corollary \ref{decay.linear2} gives $a_k',b_k'$ with $\mathfrak a_k<a_k'<b_k'<\mathfrak b_k$ such that (for $P_i$ in \eqref{eqn:pci})
\begin{align}\label{middle}
\int_{Q_i}e^{2mt}|f_k-\G_i(f_k)|^2 dtd\theta<Ce^{2v_k(P_i)}W(f_k,Q_i),\s a_k'<i<b_k',
\end{align}
and for $i$ in between $a_k$ and $a_k'$ (or $b_k$ and $b_k'$),
\[
e^{2v_k(P_i)}W(f_k,Q_i)\leq C \int_{Q_i}e^{2mt}|f_k-\G_i(f_k)|^2 dtd\theta 
\]
and the right hand side above decays (or grows) exponentially.
By A1) again, we find $W(f_k,Q_i)$ and $\int_{Q_i}e^{-2u_k}|f_k-\G_i(f_k)|^2$ also decay (or grow) exponentially at a slightly smaller rate. Combining with the fact $\mu_i>\epsilon_2/2$ on $[\mathfrak a_k,\mathfrak b_k]$, we obtain
\begin{align}\label{end. f}
\left(\sum_{i=\mathfrak{a}_k}^{a_k'}+\sum_{i=b_k'}^{\mathfrak{b}_k}\right) \|A_k\|_{L^2(Q_i)}^p\leq C\Phi_k^\frac{p}{2},
\end{align}
where 
$$
\Phi_k:=\max\{\int_{Q_{\mathfrak{a}_{k}}}e^{-2u_k}|f_k-\G_{\mathfrak a_k}(f_k)|^2 dtd\theta,\int_{Q_{\mathfrak{b}_{k}}}e^{-2u_k}|f_k-\G_{\mathfrak b_{k}}(f_k)|^2 dtd\theta\}.
$$
By Corollary \ref{cor:lambdak}, we know  $\Phi_k\to 0$. 
Moreover, \eqref{middle} implies that 
\begin{align}\label{middle f}
   \int_{Q_i}e^{-2u_k}|f_k-\G_i(f_k)|^2 dtd\theta<CW(f_k,Q_i),\s a_k'<i<b_k'. 
\end{align}

Next we consider the equation of $H_k$ for which we are about to apply Corollary \ref{decay.linear3}. For simplicity, we denote the right hand side by $F_k$ so that
$$
\Delta H_k=F_k
$$
By Part 1) in Lemma \ref{mean.value} and A1), we know $u_k+|\nabla u_k|\le C$ and then Lemma \ref{equ. struc.} implies  $$|F_k|=|\Delta H_k|\le C(\int_{Q_{i-1}\cup Q_i\cup Q_{i+1}}|A_k|^2dV_{g_k})^{\frac{1}{2}}(|H_k|+|\nabla H_k|).$$ By 
the $\epsilon$-regularity of Willmore surfaces, we know 
$$e^{u_k}(|H_k|+|\nabla H_k|)\le C(\int_{Q_{i-1}\cup Q_i\cup Q_{i+1}}|A_k|^2dV_{g_k})^{\frac{1}{2}},$$  
hence
$$
\int_{Q_{i}}e^{2u_k}|\Delta H_k|^2 dtd\theta\leq C(\int_{Q_{i-1}\cup Q_i\cup Q_{i+1}}|A_k|^2dV_{g_k})^2. 
$$
For $i\in [\mathfrak a_k,\mathfrak b_k]$, $\mu_i\geq \epsilon_2/2$ and Corollary \ref{comparison} imply that
\begin{align*}
\int_{Q_{i-1}\cup Q_i\cup Q_{i+1}}|A_k|^2dV_{g_k}\le \frac{2}{\epsilon_2}\int_{Q_{i-1}\cup Q_i\cup Q_{i+1}}|H_k|^2dV_{g_k}\le \frac{6}{\epsilon_2}\int_{Q_i}|H_k|^2dV_{g_k}.
\end{align*}
In summary, we have
\begin{align}\label{Delta H_k bd.}
\int_{Q_i}e^{-2mt}|F_k|^2 dtd\theta\leq e^{-2v_k(P_i)}\int_{Q_{i}}e^{2u_k}|\Delta H_k|^2 dtd\theta\le Ce^{-2v_k(P_i)}(\int_{Q_i}|H_k|^2dV_{g_k})^2.
\end{align}
By Corollary \ref{comparison} and A1) again, we may assume
$$
\frac{1}{3}e^{-2v_k(P_{i-1})}(\int_{Q_{i-1}}|H_k|^2dV_{g_k})^2 \leq e^{-2v_k(P_i)} (\int_{Q_i}|H_k|^2dV_{g_k})^2\leq 3e^{-2v_k(P_{i+1})}(\int_{Q_{i+1}}|H_k|^2dV_{g_k})^2.
$$
Then, by Corollary \ref{decay.linear3}, there exists $a_k''>a_k'$, $b_k''<a_k'$, such that
\begin{align}\label{middle H}
\int_{Q_{i}}e^{2u_k}|H_k-\F_i(H_k)|^2dV_{g_k}\leq C(\int_{Q_i}|H_k|^2d\mu_k)^2,\s \forall i\in[a_k'',b_k'']. 
\end{align}
and 
\begin{align*}
(\sum_{i=a_k'}^{a_k''}+\sum_{i=b_k''}^{b_k'})\|A_k\|_{L^2(Q_i)}^p&\le C(\sum_{i=a_k'}^{a_k''}+\sum_{i=b_k''}^{b_k'})(\int_{Q_i}|H_k|^2dV_{g_k})^\frac{p}{2}\\
&\le C \left(\int_{Q_{a_k'}}e^{2u_k}|H_k-\F_{a_k'}(H_k)|^2 dtd\theta+\int_{Q_{b_k'}}e^{2u_k}|H_k-\F_{b_k'}(H_k)|^2 dtd\theta\right)^{\frac{p}{4}}.
\end{align*}
By Corollary \ref{comparison},
we know $\lambda_i(H_k)\le 2$  for $i=a_k', b_k'$ and hence  Lemma \ref{mean.value} implies
\begin{align*}
\|e^{2u_k}(H_k-\F_{a_k'}(H_k))\|_{L^2(Q_{a_k'})}&\le (1+\lambda_{a_k'})\|e^{2u_k}H_k\|_{L^2(Q_{a_k'})}\\
\le C \|e^{u_k}H_k\|_{L^2(Q_{a_k'})}\le C\Theta_k.
\end{align*}
Same argument implies  $\|e^{2u_k}(H_k-\F_{a_k'}(H_k))\|_{L^2(Q_{b_k'})}\le C\Theta_k$, hence we get 
\begin{align}\label{end. H}
(\sum_{i=a_k'}^{a_k''}+\sum_{i=b_k''}^{b_k'})\|A_k\|_{L^2(Q_i)}^p\le C \Theta_k^{\frac{p}{4}}.
\end{align}
Combining \eqref{middle f}\eqref{middle H}\eqref{end. f} and \eqref{end. H} together, we complete the proof of Theorem \ref{main2} by choosing $\mathfrak a_k'=a_k''$ and $\mathfrak b_k'=b_k''$. 
\end{proof}

\subsection{Proof of Theorem \ref{2nd.asymp.}}
As a consequence of Theorem \ref{main2}, using the inequalities in Part 2), we are able to study the asymptotic limit of
\[
\frac{f_k- \G_i(f_k)}{\epsilon_k} \quad \text{and} \quad \frac{H_k- \F_i(H_k)}{\epsilon_k^2}.
\]

\begin{proof}
1) Let $i_k$ and $\hat f_k$ be given in the assumptions. 
We won't be able to show the convergence of $\frac{\hat f_k - \G_1(\hat f_k)}{\hat \epsilon_k}$ directly. Instead, we consider a translation defined by
$$f_k'(t,\theta)=e^{-u_k(P_{i_k})}f_k(t+(i_k-1)L,\theta)=\hat f_k(t,\theta) +e^{-u_k(P_{i_k})}f_k^*((i_k-1)L).$$
We will prove below that $\frac{f'_k -\G_1 (f'_k)}{\hat \epsilon_k}$ converges in $C^\infty_{loc}(\R \times S^1)$ to the $\phi$ in the statement of the theorem. Then the proof of the first part is done by noticing that $\frac{f'_k -\G_1(f'_k)}{\hat \epsilon_k}$ is a translation of $\frac{\hat f_k - \G_1(\hat f_k)}{\hat \epsilon_k}$.
Moreover, by Corollary \ref{cor:lambdak}, $e^{-u_k(P_{i_k})}f_k^*((i_k-1)L)\to 0$, which implies that $f'_k$ and $\hat f_k$ have the same limit, $f_\infty$.

By Theorem \ref{main2}, we have
$$
\int_{Q_i}e^{-2u_k}|f_k-\G_i(f_k)|^2dtd\theta\leq CW(f_k,Q_i), \quad  \forall i\in (\mathfrak a_k',\mathfrak b_k').
$$
By the scaling property that
\begin{align*}
(e^{-u_k}f_k)(t+ (i_k-1)L,\theta)&= (e^{-\hat u_k} f'_k)(t,\theta) \\
 e^{-u_k}(\G_{i_k+j} f_k)(t+ (i_k-1)L,\theta)&= e^{-\hat u_k} (\G_j(f'_k)) (t,\theta),
\end{align*}
we obtain
\begin{equation*}
\int_{Q_j}e^{-2\hat u_k}|f_k'-\G_j(f_k')|^2dtd\theta\leq CW(\hat f_k,Q_j),\quad \forall j\in (\mathfrak a_k'+i_k,\mathfrak b_k'-i_k).
\end{equation*}

Now, for any fixed $j_0>0$, by Theorem \ref{neck.first.order}, we know 
\begin{equation}
    \label{eqn:goodhatu}
\lim_{k\to\infty}\max_{|j|\le j_0}\frac{W(\hat f_k,Q_j)}{\hat \epsilon_k^2}=\pi L \quad \text{and} \quad \lim_{k\to +\infty}\max_{|j|\le j_0}\|\hat u_k+mt\|_{C^0(Q_j)}=0.
\end{equation}
So, for $k$ larger than some $k_0$ depending on $j_0$, there holds 
\begin{align}\label{f_k' middle}
\int_{Q_j}e^{2mt}|\frac{f_k'-\G_j(f_k')}{\hat \epsilon_k}|^2dtd\theta\leq C, \forall j\in [-j_0, j_0],
\end{align}
for some constant $C$ independent of $j_0$. 
Set
$$
\phi_k=\frac{f_k'-\G_1(f_k'))}{\hat \epsilon_k}.
$$
Then, 
$$\Delta \phi_k=\frac{\Delta f_k'}{\hat \epsilon_k}=\frac{\hat H_k}{\hat \epsilon_k}e^{2\hat u_k}$$
whose right hand side converges to (due to Theorem \ref{neck.first.order} and the second equation in \eqref{eqn:goodhatu})
$$e^{-2mt}h=e^{-mt}(v_1\cos{m\theta}+v_2\sin{m\theta}).$$

Next, we claim that
\[
\|e^{mt}\phi_k\|_{L^2(Q_j)}\le C, \quad \forall \abs{j}\leq j_0,\s  k\geq k_0.
\]
To see this, we note that
\begin{align*}
\|e^{mt}\phi_k\|_{L^2(Q_j)}\le \left(\int_{Q_j}e^{2mt}|\frac{f_k'-\G_j(f_k')}{\hat \epsilon_k}|^2dtd\theta\right)^{\frac{1}{2}}+\left(\int_{Q_j}e^{2mt}|\frac{\G_1(f_k')-\G_j(f_k')}{\hat \epsilon_k}|^2dtd\theta\right)^{\frac{1}{2}}.
\end{align*}
By \eqref{Gi} and the quantitative Pohozaev identity \eqref{ODE.g}, we know 
\begin{align*}
|\G_1(f_k')-\G_j(f_k')|&\le \frac{e^{-mt}}{2m}\abs{g^-(\frac{L}{2})e^{\frac{mL}{2}}-g^{-}(\frac{(2j-1)L}{2})e^{\frac{m(2j-1)L}{2}}}\\
&\le \frac{e^{-mt}}{2m}\int_{\frac{L}{2}}^{\frac{(2j-1)L}{2}}e^{m\tau}|\alpha(\tau)|d\tau,
\end{align*}
where 
\begin{align*}|\alpha(\tau)|=\frac{1}{\pi}\abs{\left(\int_{0}^{2\pi}\hat H_k e^{2\hat u_k}(\tau,\theta)\cos m\theta d\theta, \int_{0}^{2\pi}\hat H_k e^{2\hat u_k}(\tau,\theta)\sin m\theta d\theta \right)}
\le \frac{1}{\sqrt{\pi}}\left(\int_{0}^{2\pi}|\hat H_k e^{2\hat u_k}|^2(\tau,\theta)d\theta\right)^{\frac{1}{2}}.
\end{align*}
Using the second part in \eqref{eqn:goodhatu} and integrating the above inequality, we get
\begin{align*}
\int_{Q_j}e^{2mt}|\frac{\G_1(f_k')-\G_j(f_k')}{\hat \epsilon_k}|^2dtd\theta\le C\frac{j_0 L}{\hat \epsilon_k^2}\int_{\frac{L}{2}}^{\frac{(2j-1)L}{2}}\int_0^{2\pi}|\hat H_k|^2e^{2\hat u_k}(\tau,\theta)d\tau d\theta\le C(j_0,L),
\end{align*}
which together with \eqref{f_k' middle} finishes the proof of the claim.

With the a priori estimate given by the claim, we know $\phi_k$ converges in $C_{loc}^\infty(\mathbb{R}\times S^1)$ to $\phi$ satisfying
$$
\Delta\phi=e^{-mt}(v_1\cos m\theta +v_2\sin m\theta)
$$
and
\[
\max_{|j|\le j_0}\|e^{mt}\phi\|_{L^2(Q_j)}<C(j_0,L)\quad \text{and} \quad \G_1(\phi)=0.
\]

Setting
$$
\phi'=-\frac{t e^{-mt}}{2m}(v_1\cos m\theta +v_2\sin m\theta),
$$
we verify that
\begin{align*}
\Delta \phi'=e^{-mt}(v_1\cos m\theta +v_2\sin m\theta) 
\end{align*}
and
\begin{align*}
\phi'-\G_1(\phi')=(-\frac{t}{2m}+\frac{L}{4m}-\frac{1}{4m^2})e^{-mt}(v_1\cos m\theta +v_2\sin m\theta).
\end{align*}

Due to the fact that $\G_1(\phi)=0$, we can estimate (for any $j\in \mathbb Z$)
\begin{align}\label{eqn:2growth}
\int_{Q_j}e^{2mt}|\phi-\phi'-\G_1(\phi-\phi')|^2 dtd\theta&=\int_{Q_j}e^{2mt}|\phi-\phi'-\G_1(\phi')|^2 dtd\theta\\ \nonumber
&\leq C\left[(jL)^2+\int_{Q_j}e^{2mt}(C_1e^{-mt}+C_2te^{-mt})^2 dtd\theta\right]\\
&\le C((jL)^2+1).
\end{align}
By our choice of $\phi'$, $\phi-\phi'$ and hence $\phi-\phi'-\G_1(\phi-\phi')=\phi-\phi'-\G_j(\phi-\phi')$ is harmonic function defined on $\R\times S^1$.
It satisfies 3-circle Lemma \ref{3-circle.harmonic} on $Q_{j-1}\cup Q_j\cup Q_{j+1}$ for each $j\in \mathbb{Z}$. As a consequence, it must vanish identically, because if otherwise, $\int_{Q_j}e^{2mt}|\phi-\phi'-\G_1(\phi-\phi')|^2 dtd\theta$ would grow exponentially, contradicting \eqref{eqn:2growth}.
By $\G_1(\phi)=0$, we get
\begin{align*}
\phi=\phi'-\G_1(\phi')=(-\frac{t}{2m}+\frac{L}{4m}-\frac{1}{4m^2})e^{-mt}(v_1\cos m\theta +v_2\sin m\theta).
\end{align*}

2)
Next, we consider the convergence of $\frac{\hat H_k-\F_i(\hat H_k)}{\hat \epsilon_k^2}$ by similar argument.  Again by Theorem \ref{main2}, we have
\begin{equation}
\int_{Q_i}e^{2u_k}|H_k-\F_i(H_k)|^2dtd\theta\leq C(W(f,Q_i))^2, \forall i\in (\mathfrak a_k',\mathfrak b_k').
\end{equation}
Both sides of the above inequalty are invariant under scaling and translation, hence
\begin{align*}
\int_{Q_j}e^{2\hat u_k}|\hat H_k-\F_i(\hat H_k)|^2dtd\theta\leq C(W(\hat f_k,Q_j))^2, \forall j\in (\mathfrak a_k'-i_k,\mathfrak b_k'-i_k).
\end{align*}
By Theorem \ref{neck.first.order}, we know for any $j_0>0$ and $k>k_0$ (for some $k_0$ depending on $j_0$), there holds 
\begin{align*}
\int_{Q_j}e^{-2mt}|\frac{\hat H_k-\F_j(\hat H_k)}{\hat \epsilon_k^2}|^2dtd\theta\leq C, \forall j\in [-j_0,j_0],
\end{align*}
where $C$ is independent of $j_0$. 

Set $\psi_k=(\hat H_k-\F_1(\hat H_k))/{\hat\epsilon_k^2}$, the equation of the mean curvature implies that
\begin{equation}
    \label{eqn:psik}
\Delta \psi_k = -2{\rm div}(\frac{\hat H_k}{\hat \epsilon_k}\cdot \frac{ \hat A_{k,pq}}{\hat\epsilon_k}\hat g_k^{ip}\partial_i \hat f_k)+\frac{1}{2}{\rm div}(|\frac{\hat H_k}{\hat\epsilon_k}|^2\nabla \hat f_k).
\end{equation}
In order to take the limit in the above equation, we will first claim
\begin{equation}
    \label{eqn:psikl2}
\norm{e^{-mt} \psi_k}_{L^2(Q_j)}\leq C (j_0+1), \quad \forall \abs{j}\leq j_0, \, k\geq k_0.
\end{equation}
To see this, we note that
\[
\|e^{-mt}\psi_k\|_{L^2(Q_j)}\le \|e^{-mt}\frac{\hat H_k-\F_j(\hat H_k)}{\hat \epsilon_k^2}\|_{L^2(Q_j)}+\|e^{-mt}\frac{\F_1(\hat H_k)-\F_j(\hat H_k)}{\hat \epsilon_k^2}\|_{L^2(Q_j)}.
\]
The first term in the right hand side above is bounded by \eqref{eqn:goodhatu} and Theorem \ref{neck.first.order}. For the second term, we use \eqref{ODE.g} to see
\begin{align*}
|\F_1(\hat H_k)-\F_j(\hat H_k)|&\le \frac{e^{mt}}{2m}\sqrt{\cos^2 m\theta+\sin^2m\theta}|g^+(\frac{L}{2})e^{\frac{mL}{2}}-g^{+}(\frac{(2j-1)L}{2})e^{-\frac{m(2j-1)L}{2}}|\\
&\le \frac{e^{mt}}{2m}\int_{\frac{L}{2}}^{\frac{(2j-1)L}{2}}e^{-m\tau}|\alpha(\tau)|d\tau,
\end{align*}
where 
\begin{align*}|\alpha(\tau)|=\frac{1}{\pi}\big|\big(\int_{0}^{2\pi}\Delta \hat H_k(\tau,\theta)\cos m\theta d\theta, \int_{0}^{2\pi}\Delta \hat H_k (\tau,\theta)\sin m\theta d\theta \big)\big|
\le \frac{1}{\sqrt{\pi}}(\int_{0}^{2\pi}|\Delta \hat H_k |^2(\tau,\theta)d\theta )^{\frac{1}{2}}.
\end{align*}
By Theorem \ref{neck.first.order}, \eqref{eqn:goodhatu}, the $\epsilon$-regularity and the scaling property of mean curvature, we have
\begin{align*}
\int_{\frac{L}{2}}^{\frac{(2j-1)L}{2}}e^{-m\tau}|\alpha(\tau)|d\tau&\le C\sqrt{j_0L}(\int_{\frac{L}{2}}^{\frac{(2j-1)L}{2}} \int_0^{2\pi}e^{-2m\tau}|\Delta \hat H_k|^2d\tau d\theta)^{\frac{1}{2}} \\
&\le C\sqrt{j_0 L}(\int_{\frac{L}{2}+(i_k-1)L}^{\frac{(2j-1)L}{2}+((i_k-1)L)} \int_0^{2\pi} e^{2u_k}|\Delta  H_k|^2d\tau d\theta)^{\frac{1}{2}}\\
&\le C(L) j_0 \sup_{|i-\hat i_k|\le j_0+1}W(f_k,Q_i)\\
&\leq C(L) j_0 \hat \epsilon_k^2.
\end{align*}
Therefore,
\begin{align*}
\|e^{-mt}\frac{\F_1(\hat H_k)-\F_j(\hat H_k)}{\hat \epsilon_k^2}\|_{L^2(Q_j)}\le \frac{|Q_j|}{2m\hat \epsilon_k^2}\int_{\frac{L}{2}}^{\frac{(2j-1)L}{2}}e^{-m\tau}|\alpha(\tau)|d\tau\le C(L)j_0,
\end{align*}
which concludes the proof of the claim \eqref{eqn:psikl2}.

With the estimates given by Theorem \ref{neck.first.order} and \eqref{eqn:psikl2},  we learn from the equation \eqref{eqn:psik} that $\psi_k$ converges to a function $\psi$ in $C^\infty_{loc}(\R \times S^1)$, satisfying
\begin{equation}
    \label{eqn:deltapsi}
\Delta\psi=-2\partial_q(h\cdot A_{pq} e^{2mt}\partial_p  f_\infty)+\frac{1}{2}{\rm div}(|h|^2\nabla f_\infty).
\end{equation}
Using the orthonormal basis $(e_i)$ given in Theorem \ref{neck.first.order}, we write
\[
\psi = (\psi',\psi'') \in \R^2 \times \R^{n-2}.
\]
To finishing the proof of Theorem \ref{2nd.asymp.}, it suffices to show that $\psi''\equiv 0$. 

For that purpose, we first note that the projection of the right hand side of \eqref{eqn:deltapsi} onto the space spanned by $(e_3,\cdots,e_{n})$ is zero. To see this,  we expand the right hand side of \eqref{eqn:deltapsi}
\[
-2\partial_q(h\cdot A_{pq} e^{2mt})\partial_pf_\infty -2h\cdot A_{pq} e^{2mt}\partial^2_{pq} f_{\infty}+\frac{1}{2}\partial_p(|h|^2)\partial_p f_{\infty}+\frac{1}{2}|h|^2\Delta f_\infty
\]
and notice that $f_\infty$ (see \eqref{eqn:finfinity}) together with its derivatives take value in $\R^2 \times \set{0}$. Therefore, $\psi''$ is a harmonic function defined on $\R\times S^1$.

On the other hand, it follows from \eqref{eqn:psikl2} that
\[
\norm{e^{-mt} \psi''}_{L^2(Q_j)} \leq C(\abs{j}+1), \quad \forall j\in \mathbb Z.
\]
Moreover, by the definition of $\psi_k$, we have $\F_1(\psi_k)=0$, which yields that $\F_1(\psi)=0$ and $\F_1(\psi'')=0$. We may then apply Lemma \ref{3-circle.harmonic} to $\psi''$ on any $Q_{j-1}\cup Q_j\cup Q_{j+1}$ to get an exponential growth that contradicts the above estimate, unless $\Psi''$ is identically zero. This concludes our proof of Theorem \ref{2nd.asymp.}.
\end{proof}

\subsection{A corollary of Theorem \ref{2nd.asymp.}}
In this final subsection, we use Theorem \ref{2nd.asymp.} to derive a corollary. It is the key fact used in the proof of the geodesic part of Theorem \ref{thm:main}.

\begin{lem}\label{normal term}
Under the same assumptions of Theorem \ref{2nd.asymp.} and using the same notations, there exist $C>0$ (independent of the choice of $i_k$) such that

1) 
\[
\sup_{t\in [-L,\tilde 2L]}\abs{\int_0^{2\pi}(\nabla_t^\bot \hat H_k\wedge \partial_\theta \hat f_{k} -\nabla_\theta^\bot \hat H_k\wedge \partial_t \hat f_{k})/\hat \epsilon_k^2 d\theta}\leq C.
\]

2) The limit of 
\[
t\mapsto \int_0^{2\pi}(\nabla_t^\bot \hat H_k\wedge \partial_\theta \hat f_{k} -\nabla_\theta^\bot \hat H_k\wedge \partial_t \hat f_{k})/\hat \epsilon_k^2 d\theta
\]
as a sequence of functions defined $[-L,2L]$ is a (continuous) function whose image lies in the normal space of $G(2,n)$ at $-e_1\wedge e_2$ in $\Lambda^2(\R^n)$.
\end{lem}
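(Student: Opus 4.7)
The plan is to project every 2-vector into components along the tangent space $T_\infty:=T_{-e_1\wedge e_2}G(2,n)=\mathrm{span}\{e_i\wedge e_\alpha:i\in\{1,2\},\ 3\le\alpha\le n\}$ and its orthogonal complement $N_\infty\subset\Lambda^2(\R^n)$. For $X,Y\in\R^n$ decomposed as $X=X'+X''$ with $X'\in\mathrm{span}(e_1,e_2)$ and $X''\in\mathrm{span}(e_3,\ldots,e_n)$, we have $\pi_{T_\infty}(X\wedge Y)=X'\wedge Y''+X''\wedge Y'$ and $\pi_{N_\infty}(X\wedge Y)=X'\wedge Y'+X''\wedge Y''$. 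The structural input from Theorems \ref{neck.first.order} and \ref{2nd.asymp.} is that the limits $h=\lim\hat H_k/\hat\epsilon_k$, $\phi=\lim(\hat f_k-\G_1(\hat f_k))/\hat\epsilon_k$, and $\psi=\lim(\hat H_k-\F_1(\hat H_k))/\hat\epsilon_k^2$ take values respectively in $\mathrm{span}(e_3,\ldots,e_n)$, $\mathrm{span}\{v_1,v_2\}\subset\mathrm{span}(e_3,\ldots,e_n)$, and $\mathrm{span}(e_1,e_2)$.

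The first technical step I would establish is the improved bound $(\hat H_k)'=O(\hat\epsilon_k^2)$ in $C^\infty_{loc}$. Pairing the orthogonality $\hat H_k\cdot\partial_j\hat f_k=0$ with the expansion $\partial_j\hat f_k=\partial_jf_\infty+O(\hat\epsilon_k)$, whose $O(\hat\epsilon_k)$ correction lies in $\mathrm{span}(e_3,\ldots,e_n)$ since $\phi$ does, gives $(\hat H_k)'\cdot\partial_jf_\infty=O(\hat\epsilon_k^2)$; inverting against the orthogonal frame $(\partial_tf_\infty,\partial_\theta f_\infty)$ of $\mathrm{span}(e_1,e_2)$, whose common length $e^{-mt}$ is bounded away from zero on $[-L,2L]$, yields the claim. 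Combined with $\nabla^\bot_j\hat H_k=\partial_j\hat H_k+\sum_l(\hat H_k\cdot A_{jl})e^{-2\hat u_k}\partial_l\hat f_k$ and $\hat H_k\cdot A_{jl}=O(\hat\epsilon_k^2)$, I obtain $(\nabla^\bot_j\hat H_k)'=O(\hat\epsilon_k^2)$ and $(\nabla^\bot_j\hat H_k)''=(\partial_j\hat H_k)''+O(\hat\epsilon_k^3)$.

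Denote the integrand by $I_k:=\nabla^\bot_t\hat H_k\wedge\partial_\theta\hat f_k-\nabla^\bot_\theta\hat H_k\wedge\partial_t\hat f_k$. For Part 1, the projection $\pi_{N_\infty}I_k$ is a sum of products of sizes $O(\hat\epsilon_k^2)\cdot O(1)$ and $O(\hat\epsilon_k)\cdot O(\hat\epsilon_k)$, hence $O(\hat\epsilon_k^2)$ uniformly on $[-L,2L]\times S^1$; integrating and dividing by $\hat\epsilon_k^2$ gives a bounded contribution. The projection $\pi_{T_\infty}I_k$ splits into harmless $O(\hat\epsilon_k^3)$ cross terms and a main $(\partial_t\hat H_k)''\wedge(\partial_\theta\hat f_k)'-(\partial_\theta\hat H_k)''\wedge(\partial_t\hat f_k)'$ piece of pointwise size $O(\hat\epsilon_k)$, whose $\theta$-integral will be shown to be $o(\hat\epsilon_k^2)$.

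For Part 2, since $\psi''=0$ and $\phi'=0$, the $C^\infty_{loc}$ convergences in Theorem \ref{2nd.asymp.} refine to $(\hat H_k-\F_1(\hat H_k))''=o(\hat\epsilon_k^2)$ and $(\hat f_k-\G_1(\hat f_k))'=o(\hat\epsilon_k)$. Substituting these into the dominant $T_\infty$-term and absorbing errors of sizes $o(\hat\epsilon_k^2)\cdot O(1)$ and $O(\hat\epsilon_k)\cdot o(\hat\epsilon_k)$, the integrated $T_\infty$-component reduces, modulo $o(\hat\epsilon_k^2)$, to
\[
\int_0^{2\pi}\bigl[\partial_t(\F_1(\hat H_k))''\wedge\partial_\theta(\G_1(\hat f_k))'-\partial_\theta(\F_1(\hat H_k))''\wedge\partial_t(\G_1(\hat f_k))'\bigr]\,d\theta.
\]
Here $(\F_1(\hat H_k))''$ is a pure $e^{mt}(A_k\cos m\theta+B_k\sin m\theta)$ mode and $(\G_1(\hat f_k))'$ a pure $e^{-mt}(C_k\cos m\theta+D_k\sin m\theta)$ mode for constant vectors $A_k,B_k,C_k,D_k\in\R^n$; a direct expansion shows the integrand is a constant-coefficient linear combination of $\sin 2m\theta$ and $\cos 2m\theta$ (with coefficients $B_k\wedge D_k-A_k\wedge C_k$ and $A_k\wedge D_k+B_k\wedge C_k$), each of zero mean. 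Therefore $\pi_{T_\infty}\int_0^{2\pi}I_k\,d\theta=o(\hat\epsilon_k^2)$, while $\pi_{N_\infty}\int_0^{2\pi}I_k\,d\theta/\hat\epsilon_k^2$ converges in $C^0([-L,2L])$ by the $C^\infty_{loc}$ convergences, so the limit has image in $N_\infty$. The main obstacle is establishing the improved bound $(\hat H_k)'=O(\hat\epsilon_k^2)$, without which the pointwise sizes of the integrand are too crude to expose the mode-cancellation at the heart of the proof; once it is in hand, everything else follows from the precise directions of the limits already identified in the earlier theorems.
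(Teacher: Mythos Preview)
Your overall plan—project onto $T_\infty$ versus $N_\infty$ and exploit the mode cancellation for the pure $\F_1\wedge\G_1$ piece—is sound and very close in spirit to the paper's argument. The mode cancellation you compute for $(\F_1(\hat H_k))''\wedge(\G_1(\hat f_k))'$ is exactly the paper's identity $I_4-I_4'=0$. However, there is a genuine gap in your preliminary step.

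You claim $(\hat H_k)'=O(\hat\epsilon_k^2)$ from the orthogonality $\hat H_k\cdot\partial_j\hat f_k=0$ together with ``$\partial_j\hat f_k=\partial_jf_\infty+O(\hat\epsilon_k)$, whose $O(\hat\epsilon_k)$ correction lies in $\mathrm{span}(e_3,\ldots,e_n)$ since $\phi$ does.'' But Theorem \ref{2nd.asymp.} only gives $(\hat f_k-\G_1(\hat f_k))/\hat\epsilon_k\to\phi$; it says nothing about the rate of $\G_1(\hat f_k)\to f_\infty$. Equivalently, you need $(\partial_j\hat f_k)''=O(\hat\epsilon_k)$, but the available results do not bound $(\G_1(\hat f_k))''$ by $\hat\epsilon_k$. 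Without this, the cross term $(\nabla^\bot_t\hat H_k)'\wedge(\partial_\theta\hat f_k)''$ in your $T_\infty$-decomposition is only $o(\hat\epsilon_k)\cdot o(1)$ pointwise, not the claimed $O(\hat\epsilon_k^3)$, and your bookkeeping for Part 1 breaks down.

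The paper's proof sidesteps this entirely by decomposing with respect to $\F_1,\G_1$ \emph{before} projecting onto $e_1,e_2$ versus $e_3,\ldots,e_n$. Writing $\hat H_k=(\hat H_k-\F_1)+\F_1$ and $\hat f_k=(\hat f_k-\G_1)+\G_1$ yields four cross terms $I_1,\ldots,I_4$ (and their $\theta\leftrightarrow t$ counterparts $I_1',\ldots,I_4'$). The sizes of \emph{all} pieces are known directly from Theorems \ref{neck.first.order} and \ref{2nd.asymp.}: $\hat H_k-\F_1=O(\hat\epsilon_k^2)$, $\F_1=O(\hat\epsilon_k)$, $\hat f_k-\G_1=O(\hat\epsilon_k)$, $\G_1=O(1)$. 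The large term $I_4-I_4'$ vanishes identically by the same mode cancellation you found (applied to the full vectors $a,b,a',b'$, not just their $'$ or $''$ parts), and the limits of $I_2-I_2'$ and $I_3-I_3'$ are computed using only \emph{convergence}, not rates: since $\psi\in\mathrm{span}(e_1,e_2)$ and $\G_1(\hat f_k)\to f_\infty\in\mathrm{span}(e_1,e_2)$, the limit of $I_2-I_2'$ is parallel to $e_1\wedge e_2$; since $h,\phi\in\mathrm{span}(e_3,\ldots,e_n)$, the limit of $I_3-I_3'$ is parallel to $v_1\wedge v_2$. Both lie in $N_\infty$.

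Your approach can be repaired: the uncontrolled residual in your $T_\infty$-projection is precisely $\partial_t(\F_1)'\wedge\partial_\theta(\G_1)''-\partial_\theta(\F_1)'\wedge\partial_t(\G_1)''$, which is again a pure $\cos 2m\theta$/$\sin 2m\theta$ combination and integrates to zero by the very same computation. Once you observe that the mode cancellation kills \emph{every} $\F_1\wedge\G_1$ contribution (not just the $''\wedge'$ one you isolated), the need for the unproven bound $(\hat H_k)'=O(\hat\epsilon_k^2)$ disappears—and at that point your argument has effectively become the paper's.
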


\begin{proof}
By the definition of $\nabla^\bot$, 
\begin{eqnarray*}
\nabla_t^\bot \hat H_k\wedge\partial_\theta\hat f_k&=&\partial_t\hat H_k\wedge\partial_\theta\hat f_k- \lan \partial_t \hat H_{k}, \partial_t \hat f_{k}\ran e^{-2\hat u_k}\partial_t \hat f_{k}\wedge \partial_\theta \hat f_{k}\\
&=&\partial_t\hat H_k\wedge\partial_\theta\hat f_k- \lan \hat H_{k},\partial^2_{tt} \hat f_{k}\ran e^{-2\hat u_k}\partial_t\hat f_{k}\wedge \partial_\theta \hat f_{k}\\
&=&\partial_t\hat H_k\wedge\partial_\theta\hat f_k- \lan \hat H_{k},\hat A_{k,tt}\ran e^{-2 \hat u_k}\partial_t \hat f_{k}\wedge \partial_\theta \hat f_{k}. 
\end{eqnarray*}
By Theorem \ref{neck.first.order}, both $\hat H_k/\hat \epsilon_k$ and $\hat A_{k,tt}/\hat \epsilon_k$ have a smooth limit. Hence, 
$$
\lan \hat H_{k},\hat A_{k,tt}\ran e^{-2\hat u_k}\partial_t \hat f_{k}\wedge \partial_\theta \hat f_{k}/\hat \epsilon_k^2
$$
is bounded and converges to a smooth function times
\[
e^{2mt} \partial_ t f_\infty \wedge \partial_\theta f_\infty,
\]
which is in the normal space of $G(2,n)$ at $-e_1\wedge e_2$.
The same analysis work for $\nabla^\bot_\theta \hat H_k \wedge \partial_t \hat f_k$. The proof of the lemma is reduced to the same two claims but with the integrand replaced by  
the limit of 
$$
R_k(t)=\frac{1}{\hat \epsilon_k^2}\int_0^{2\pi}(\partial_t \hat H_k\wedge\partial_\theta\hat f_k-\partial_\theta \hat H_k\wedge\partial_t\hat f_k)d\theta.
$$ 

In order to use Theorem \ref{2nd.asymp.}, we compute
\begin{eqnarray*}
&&\frac{1}{\hat \epsilon_k^2}\int_0^{2\pi}\partial_t \hat H_k\wedge\partial_\theta\hat f_kd\theta\\
&=&
\hat \epsilon_k\int_0^{2\pi}\partial_t\frac{\hat H_k-\F_1(\hat H_k)}{\hat\epsilon_k^2}\wedge\partial_\theta\frac{ \hat f_k-\G_1( \hat f_k)}{\hat\epsilon_k}d\theta+\int_0^{2\pi}\partial_t\frac{\hat H_k-\F_1(\hat H_k)}{\hat\epsilon_k^2}\wedge\partial_\theta \G_1(\hat f_k)d\theta\\
&&+\int_0^{2\pi}\partial_t\frac{\F_1(\hat H_k)}{\hat\epsilon_k}\wedge\partial_\theta\frac{\hat f_k-\G_1( \hat f_k)}{\hat\epsilon_k}d\theta+\int_0^{2\pi}\partial_t\frac{\F_1(\hat H_k)}{\hat\epsilon_k}\wedge\partial_\theta\frac{\G_1(\hat f_k)}{\hat\epsilon_k}d\theta\\
&:=&I_1+I_2+I_3+I_4,
\end{eqnarray*}
and
\begin{eqnarray*}
&&\frac{1}{\hat \epsilon_k^2}\int_0^{2\pi}\partial_\theta \hat H_k\wedge\partial_t \hat f_kd\theta\\
&=&\hat \epsilon_k\int_0^{2\pi}\partial_\theta \frac{\hat H_k-\F_1(\hat H_k)}{\hat\epsilon_k^2}\wedge\partial_t\frac{\hat f_k-\G_1(\hat f_k)}{\hat\epsilon_k}d\theta+\int_0^{2\pi}\partial_\theta \frac{\hat H_k-\F_1(\hat H_k)}{\hat\epsilon_k^2}\wedge\partial_t\G_1(\hat f_k)d\theta\\
&&+\int_0^{2\pi}\partial_\theta\frac{\F_1(\hat H_k)}{\hat\epsilon_k}\wedge\partial_t\frac{\hat f_k-\G_1(\hat f_k)}{\hat\epsilon_k}d\theta+\int_0^{2\pi}\partial_\theta \frac{\F_1(\hat H_k)}{\hat\epsilon_k}\wedge\partial_t\frac{\G_1(\hat f_k)}{\hat\epsilon_k}d\theta\\
&=&I_1'+I_2'+I_3'+I_4'.
\end{eqnarray*}
By the definition of $\F$ and $\G$, (for fixed $t$) there are $a,b,a',b' \in \R^n$ such that
$$
\F_1(\hat H_k)/\hat \epsilon_k=ae^{mt}\cos m\theta+be^{mt}\sin m\theta,\s \G_1(f_k')/\hat\epsilon_k= a'e^{-mt}\cos m\theta+b'e^{mt}\sin m\theta.
$$
We find by direct verification that
\begin{eqnarray*}
I_4-I_4'
&=&m^2\int_0^{2\pi}(a\cos m\theta+b\sin m\theta)\wedge(-a'\sin m\theta+b'\cos m\theta)d\theta\\
&& +m^2\int_0^{2\pi}(-a\sin m\theta+b\cos m\theta)\wedge(a'\cos m\theta+b'\sin m\theta)d\theta\\
&=&m^2\int_0^{2\pi}(a\wedge b'\cos^2m\theta-b\wedge a'\sin^2m\theta)d\theta\\
&&+m^2\int_0^{2\pi}(-a\wedge b'\sin^2m\theta+b\wedge a'\cos^2m\theta)d\theta\\
&=&0.
\end{eqnarray*}
Now, Theorem \ref{2nd.asymp.} implies that the limit (in smooth topology) of both $I_1$ and $I_1'$ are zero. 
Theorem \ref{2nd.asymp.} and Theorem \ref{neck.first.order} together imply $I_2,I_3,I_2',I_3'$ all have smooth limit. In particular,
$$
\sup_{[-L,2L]}\abs{R_k(t)} = \abs{\sum_{i=1}^3(I_i-I_i')}\leq C.
$$
Moreover, by computing the limit explicitly, we observe that the limit of  $I_2-I_2'$ and $I_3-I_3'$ are  parallel to $e_1\wedge e_2$ and $v_1\wedge v_2$ respectively. In summary, the limit of $R_k(t)$ is in the normal space of $G(2,n)$ at $-e_1\wedge e_2$.
\end{proof}

\section{Proof of Theorem \ref{thm:main}}
\label{sec:proof}
Let $f_k$ be as assumed in Theorem \ref{thm:main}. Theorem \ref{main1} and Theorem \ref{main2} defines
\[
0<\mathfrak{a}_k< \mathfrak{a}_k' < \mathfrak{b}_k'<\mathfrak{b}_k<m_k.
\]
Set
$$\bar{i}_{k}=\frac{\mathfrak b_k'+\mathfrak a_k'}{2} \quad \text{and} \quad \tilde{T}_k=\frac{(\mathfrak b_k'-\mathfrak a_k')L}{2},$$
where for simplicity we assume that $\bar{i}_k$ is an integer. 
Let $f_k^{\bar{i}_k}$ be defined in \eqref{eqn:fki} and set
$$\hat{f}_k=f_k^{\bar{i}_k}$$ 
and (similar to \eqref{def.hat.n})
$$\hat{\n}_k=\n(\hat f_k)$$
Let
$$
\alpha_k(t)=\hat \n_k^*(t)=\frac{1}{2\pi}\int_0^{2\pi}\hat \n_k(t,\theta)d\theta
$$
for $t\in [-\bar{i}_kL, (\mathfrak b_k - \bar{i}_k)L]$.
In the second part of Theorem \ref{thm:main}, the claim is about the limit of the image of the Gauss map. By A2), 
\[
\sup_{i=1,\cdots,m_k} \text{osc}_{Q_i} \n_k \to 0,
\]
and therefore, it suffices to study the image of the curve $\alpha_k$.

\subsection{Easy case.}
We first discuss Theorem \ref{thm:main} in an almost trivial case, namely, 
\begin{equation}
\label{eqn:trivial}
\limsup_{k\to \infty} (\mathfrak b_k'-\mathfrak a_k')=\tilde{T}<\infty.
\end{equation}
In such cases, due to Part 1) of Theorem \ref{main2}, we have 
\begin{align}\label{finite. t.}
\lim_{k\to \infty}\sum_{i=1}^{m_k}\|A_k\|^p_{L^2(Q_i)} =0.
\end{align}
This implies that both the Willmore energy on the neck and the total length of the curve converge to zero. The energy part is trivial and the claim on the length follows from the formula
\begin{align*}
|\alpha_k(t_1)-\alpha_k(t_2)|&\le \frac{1}{2\pi}\int_{t_1}^{t_2}\int_0^{2\pi}|\nabla \hat \n_k|(t,\theta)d\theta dt\\
&\le C \sum_{i=1}^{m_k} \norm{A_k}_{L^2(Q_i)},
\end{align*}
for any $t_1,t_2\in [0, m_kL]$.
Hence, the image of $\alpha_k$ converges (in the Hausdorff sense) to a single point. 
The proof of Theorem \ref{thm:main} is done in this case by Part 5) of Theorem \ref{neck.first.order}, which implies that the limit of $\norm{\tau_2(f_k,\cdot)}$ is zero.

\subsection{General case: energy and length}
In what follows, we assume
\begin{equation*}
\lim_{k\to \infty} \tilde{T}_k<\infty.
\end{equation*}
Similar to the discussion above, we have
\begin{align}
    \label{eqn:endnoenergy}
\lim_{k\to \infty} W(f_k,[0,T_k]\times S^1) =  \lim_{l\to\infty}\lim_{k\to \infty}W(\hat{f}_k, [-\tilde{T}_k+l,\tilde{T}_k-l])
\end{align}
and
\begin{align}\label{eqn:endnoneck}
\lim_{k\to \infty}\text{Length}(\alpha_k)=\lim_{l\to\infty}\lim_{k\to \infty}\text{Length}(\alpha_k, [-\tilde{T}_k+l,\tilde{T}_k-l]). 
\end{align} 
The first claim in Theorem \ref{thm:main} follows from \eqref{eqn:endnoenergy} and Part 5) of Theorem \ref{neck.first.order}.
\begin{rem} \label{rem:uniform}
It is worth noting that Theorem \ref{neck.first.order} implies
$$
\lim_{l\rightarrow+\infty}\lim_{k\rightarrow \infty}\max_{\mathfrak a_k+l<i<\mathfrak b_k-l}\left|\frac{\|\tau_2(f_k,\cdot)\|}{\sqrt{W(f_k,Q_{i})}}-2\sqrt{\frac{2\pi}{L}}\right|=0.
$$
In fact, if the above limit is not true, then we may find a subsequence $i_k$ contradicting Part 5) of Theorem \ref{neck.first.order}.
\end{rem}

For the length of $\alpha_k$, we derive from Par 2) and 5) of Theorem \ref{neck.first.order} that for any sequence $t_k$ satisfying $t_k+ \tilde{T}_k \to \infty$ and $\tilde T_k -t_k \to \infty$,
\begin{align}\label{eqn:regular_curve}
\lim_{k\to \infty}\frac{|\dot \alpha _k(t_k)|}{\sqrt{W(f_k,Q_{[t_k]+\bar{i}_k})}}=\frac{1}{2\sqrt{\pi L}}, 
\end{align}
where $[t_k]$ is the unique $j$ satisfying $t_k\in [(j-1)L,jL)$.

Hence,
\[
\lim_{k\to \infty} \text{Length}(\alpha_k) =\frac{1}{4\sqrt{2}\pi}\cdot \lim_{k\to\infty}\|\tau_2(f_k,\cdot)\|m_kL. 
\]

\subsection{General case: being geodesic}
To conclude the proof of Theorem \ref{thm:main}, it suffices to show that $\alpha_k$ in its arc-length parametrization converges in some sense to a geodesic in $G(2,n)$. For that purpose, we use $s$ for the arc-length parameter, denote the parameter transformation by $t=\xi_k(s)$. Assume $\xi_k(0)=0$ and for fixed $l$, set
\[
S_{1k}:= \xi_k^{-1}(-\tilde T_k+l); \quad S_{2k}:= \xi_k^{-1}(\tilde T_k-l).
\]
By taking subsequence, we assume
\[
S_1=\lim_{k\to \infty} S_{1k}; \quad S_2=\lim_{k\to \infty} S_{2k}.
\]
Note that it is possible that the above limit is infinity.

We need the following notion of weak geodesic.
\begin{defi}
    \label{def:weakgeodesic}
    Assume that $M$ is an embedded submaniold of $\R^n$.
    Let $\gamma:(a,b)\to M\subset \R^n$ be a $C^{1,\alpha}$ curve (parametrized by arc-length). It is said to be a weak geodesic if for any tangent vector field $V$ of $M$ such that $V\circ \gamma$ has compact support, we have
    \[
    \int_a^b \frac{d}{ds} \gamma(s)\cdot \frac{d}{ds}V(\gamma(s)) ds =0.
    \]
\end{defi}
Obviously, a smooth geodesic is a weak geodesic. Moreover, it is known that a weak geodesic is smooth and hence a classical one. In our case, we will regard $G(2,n)$ as a submanifold of $\Lambda^2(\R^n)$.

\begin{lem}\label{lem:geodesic}  $\{\alpha_k\circ \xi_k(s)\}_{k=1}^{\infty}$ is  $C^2$ uniformly bounded on any compact subset of $I:=(S_1,S_2)$ and converges to a geodesic $\alpha:I\to G(2,n)$ with 
$$\text{Length}(\alpha)=\frac{1}{4\sqrt{2}\pi}\cdot \lim_{k\to\infty}\|\tau_2(f_k,\cdot)\|m_kL.$$ 
\end{lem}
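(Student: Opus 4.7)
The plan is to study the curve $\beta_k(s):=\alpha_k(\xi_k(s))$. Since $|\beta_k'|=1$ by arc-length parametrization one already has a $C^1$ bound; the key is to establish a uniform $C^2$ bound on compact subsets of $I$ and identify the subsequential limit as a weak geodesic of $G(2,n)\subset\Lambda^2(\R^n)$ in the sense of Definition \ref{def:weakgeodesic}, which for an embedded submanifold is automatically a classical one.

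First I would average the Gauss map equation \eqref{tension.gauss} (written for $\hat\n_k$) in $\theta$. Since $\int_0^{2\pi}\partial_{\theta\theta}\hat\n_k\,d\theta=0$, this produces the ODE
\[
\ddot\alpha_k(t)=\frac{1}{2\pi}\int_0^{2\pi}A_{G(2,n)}(d\hat\n_k,d\hat\n_k)\,d\theta+\frac{1}{2\pi}\int_0^{2\pi}\bigl(\nabla_t^\bot\hat H_k\wedge\partial_\theta\hat f_k-\nabla_\theta^\bot\hat H_k\wedge\partial_t\hat f_k\bigr)\,d\theta.
\]
By Theorem \ref{neck.first.order}, $d\hat\n_k/\hat\epsilon_k\to dv$ in $C^\infty_{\mathrm{loc}}$, so the first integrand is $O(\hat\epsilon_k^2)$ with limit in the normal space of $G(2,n)$ at $-e_1\wedge e_2$ (this is the defining property of the second fundamental form). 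Lemma \ref{normal term} handles the second integrand: it is $O(\hat\epsilon_k^2)$ uniformly on $[-L,2L]$ and its limit is also normal to $G(2,n)$ at $-e_1\wedge e_2$. Hence $\ddot\alpha_k/\hat\epsilon_k^2$ is uniformly bounded and converges to a vector lying in the normal space of $G(2,n)$ at the limit of the Gauss map.

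Combined with \eqref{eqn:regular_curve} and Part 5) of Theorem \ref{neck.first.order}, which give $|\dot\alpha_k|/\hat\epsilon_k\to 1/2$ locally uniformly in $I$, the identity
\[
\beta_k''(s)=\frac{\ddot\alpha_k(\xi_k(s))}{|\dot\alpha_k|^2}-\frac{\langle\ddot\alpha_k,\beta_k'\rangle}{|\dot\alpha_k|^2}\,\beta_k'(s)
\]
yields $\beta_k''=O(1)$ uniformly on compact subsets of $I$. Arzel\`a--Ascoli then extracts a subsequence with $\beta_k\to\beta$ in $C^{1,\alpha}_{\mathrm{loc}}(I)$. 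The bound $|\hat\n_k(t,\theta)-\alpha_k(t)|=O(\hat\epsilon_k)\to 0$ on compact sets (again from Theorem \ref{neck.first.order}), together with $\hat\n_k\in G(2,n)$, forces $\beta(I)\subset G(2,n)$.

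To verify that $\beta$ is a weak geodesic, I would test against an arbitrary tangent vector field $V$ of $G(2,n)$ with compact support in $I$. Both terms in $\langle\beta_k''(s),V(\beta_k(s))\rangle$ vanish in the limit: the first because the limit of $\ddot\alpha_k/|\dot\alpha_k|^2$ is normal to $G(2,n)$ at $\beta(s)$ while $V$ is tangent, the second because $\beta_k'$ is tangent and paired with the same normal limit. Dominated convergence and integration by parts then give
\[
\int_I\bigl\langle\beta'(s),\tfrac{d}{ds}V(\beta(s))\bigr\rangle ds=-\lim_k\int_I\bigl\langle\beta_k''(s),V(\beta_k(s))\bigr\rangle ds=0,
\]
so $\beta$ is a classical geodesic. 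The length identity is obtained by combining \eqref{eqn:endnoneck} with the formula $\mathrm{Length}(\alpha_k)\to\frac{1}{4\sqrt{2}\pi}\lim_k\|\tau_2(f_k,\cdot)\|m_kL$ derived in the previous subsection, noting that $C^{1,\alpha}_{\mathrm{loc}}$ convergence preserves arc-length. The main obstacle will be piecing together the local analysis of Theorem \ref{neck.first.order}, whose limiting basis $(e_i)$ and vectors $v_1,v_2$ depend on the base sequence $i_k$: a diagonal/subsequence argument exploiting Remark \ref{rem:uniform} (which shows the normalization $\hat\epsilon_k$ is uniformly comparable across the middle of the neck) is needed to upgrade the pointwise/local data of Theorem \ref{neck.first.order} to the uniform $C^2$ bounds for $\beta_k$ required above.
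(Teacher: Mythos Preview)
Your proposal is correct and follows essentially the same route as the paper: average the Gauss map equation in $\theta$, bound $\ddot\alpha_k/\hat\epsilon_k^2$ via Theorem \ref{neck.first.order} and Lemma \ref{normal term}, convert to a $C^2$ bound on $\beta_k$ using \eqref{eqn:regular_curve}, extract a $C^{1,\alpha}$ limit, and verify the weak-geodesic condition by showing the limit of $\ddot\alpha_k/|\dot\alpha_k|^2$ is normal to $G(2,n)$ at $\beta(s)$ before applying dominated convergence. The paper handles the dependence of $(e_i),v_1,v_2$ on $i_k$ exactly as you anticipate, by re-centering at $\tilde i_k=\bar i_k+[\xi_k(s_0)]$ for each fixed $s_0$; your diagonal/subsequence remark is the right idea, and in practice one only needs that every subsequential limit of $\beta_k''(s_0)$ is normal, which forces $\langle\beta_k''(s_0),V(\beta_k(s_0))\rangle\to 0$ along the full sequence.
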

\begin{proof}
In what follows, we use $\dot \alpha_k$ for the $t$-derivative of $\alpha_k$. 
By direct calculation, we get 
\begin{align*}
\frac{d\alpha_k\circ \xi_k}{ds}=\frac{\dot \alpha_k(\xi_k(s))}{|\dot \alpha_k(\xi_k(s))|}, 
\end{align*}
and 
\begin{equation}
\label{eqn:d2alpha}
\frac{d^2\alpha_k\circ \xi_k}{ds^2}=\frac{\ddot \alpha_k(\xi_k(s))}{|\dot \alpha_k(\xi_k(s))|^2}-\langle\frac{\ddot \alpha_k(\xi_k(s))}{|\dot \alpha_k(\xi_k(s))|^2}, \frac{\dot \alpha_k(\xi_k(s))}{|\dot \alpha_k(\xi_k(s))|}\rangle \frac{\dot \alpha_k(\xi_k(s))}{|\dot \alpha_k(\xi_k(s))|}.
\end{equation}
By the definition of arc-length paramtrization, $\abs{d\alpha_k/ds}=1$. For the $C^2$ bound, it suffices to estimate $\abs{d^2\alpha_k/ds^2}$. By \eqref{eqn:regular_curve}, this amounts to 
\begin{equation}
    \label{eqn:suf1}
\abs{\ddot \alpha_k(\xi_k(s))}\leq C W(f_k,Q_{[\xi_k(s)]}),
\end{equation}
which we prove now.

For any fixed $s_0$, let $t_k=\xi_k(s_0)$ and $\tilde i_k = \bar i_k + [t_k]$ and set (using \eqref{eqn:fki})
\begin{align*}
  \tilde \epsilon_k&=\sqrt{W(f_k,Q_{\tilde i_k})}   \\
  \tilde f_k&= f_k^{\tilde i_k} (= (\hat f_k)^{[t_k]})\\
  \tilde \n_k&= \n(\tilde f_k).
\end{align*}
Since translation and scaling do not change the Gauss map, if we define $\tilde \alpha_k$ by
$$
\tilde \alpha_k(t)=\tilde \n_k^{*}(t)=\frac{1}{2\pi}\int_0^{2\pi}\tilde \n_k(t,\theta)d\theta,\quad \text{for} \quad t\in [-\tilde T_k - [t_k]L, \tilde T_k -[t_k]L],
$$
we have
\[
\tilde \alpha_k (t) =  \alpha_k(t+[t_k]L).
\]
Using \eqref{tension.G}, we have
\begin{eqnarray*}
    \ddot \alpha_k(t_k) &=& \ddot {\tilde \alpha}_k(t_k- [t_k]L) \\
    &=& \left(\frac{1}{2\pi}\int_0^{2\pi} A_{G(2,n)}(d\tilde \n_k, d\tilde \n_k) + e^{-2\tilde u_k} (\nabla^\perp_t \tilde H_k \wedge \partial_\theta \tilde f_k - \nabla^\perp_\theta \tilde H_k \wedge \partial_t \tilde f_k)  d\theta\right) (t_k-[t_k]L).
\end{eqnarray*}
Note that for any $t_k$, we always have
\[
0\leq t_k -[t_k]L\leq L.
\]
By Theorem \ref{neck.first.order} and the Part 1) of Lemma \ref{normal term} (with $\tilde f_k$ as $\hat f_k$ there),  we know that 
\[
\abs{\ddot \alpha_k(t_k)} \leq C \tilde \epsilon_k^2,
\]
from which \eqref{eqn:suf1} follows.
It also follows from Theorem \ref{neck.first.order} that
\[
\lim_{k\to \infty}\frac{\dot \alpha_k(t_k)}{\tilde \epsilon_k}=-\frac{1}{2}(e_1\wedge v_2-e_2\wedge v_1)\in T_{-e_1\wedge e_2}G(2,n).
\]
Together with \eqref{eqn:suf1}, we obtained that $\frac{d^2}{ds^2}\alpha_k\circ \xi_k$ is bounded. Hence, we may assume that by taking subsequence, $\alpha_k\circ \xi_k$ converges in $C^{1,\alpha}$ topology to a curve $\alpha(s)$.

To show that $\alpha(s)$ is a weak geodesic, it suffices to show
\begin{equation*}
   \lim_{k\to \infty} \int_{S_1}^{S_2} \frac{d}{ds} \alpha_k\circ \xi_k (s)\cdot \frac{d}{ds}V(\alpha_k\circ \xi_k(s)) ds =0
\end{equation*}
for any smooth tangent vector field $V$ as in Definition \ref{def:weakgeodesic}. Since $\alpha_k$ is smooth, by integration by parts, the above is equivalent to
\begin{equation}
    \label{eqn:suf2}
   \lim_{k\to \infty} \int_{S_1}^{S_2} \frac{d^2}{ds^2} \alpha_k\circ \xi_k (s)\cdot V(\alpha_k\circ \xi_k(s)) ds =0.
\end{equation}
Due to the Dominated Convergence Theorem, it suffices to show for each $s_0$,
\[
\lim_{k\to \infty} \frac{d^2}{ds^2} \alpha_k\circ \xi_k (s_0) \quad \text{is a normal vector of $G(2,n)$ at $\alpha(s_0)$.}
\]
Let $t_k$ be as before. 
\[
\alpha(s_0)=\lim_{k\to\infty}\alpha_k(t_k)=\lim_{k\to \infty}\tilde \alpha_k(t_k-[t_k]L)=-e_1\wedge e_2,
\]
where $e_1$ and $e_2$ are those in Theorem \ref{neck.first.order} when we apply it to $\tilde f_k$. By \eqref{eqn:d2alpha}, it remains to show
\[
\lim_{k\to \infty} \frac{\ddot \alpha_k(t_k)}{(\tilde \epsilon_k)^2}\quad \text{is a normal vector of $G(2,n)$ at $-e_1\wedge e_2$.}
\]
To see this, we use the equation
\begin{equation*}
    \frac{\ddot \alpha_k(t_k)}{(\tilde \epsilon_k)^2}  = \left(\frac{1}{2\pi}\int_0^{2\pi} A_{G(2,n)}(\frac{d\tilde \n_k}{\tilde \epsilon_k}, \frac{d\tilde \n_k}{\tilde \epsilon_k}) + e^{-2\tilde u_k}\frac{1}{(\tilde \epsilon_k)^2} (\nabla^\perp_t \tilde H_k \wedge \partial_\theta \tilde f_k - \nabla^\perp_\theta \tilde H_k \wedge \partial_t \tilde f_k)  d\theta \right)(t_k-[t_k]L).
\end{equation*}
For the first term, the limit is a normal vector at $-e_1\wedge e_2$ because $A_{G(2,n)}$ is continuous and that $\tilde \n_k(t_k-[t_k]L,\theta)$ converges uniformly to $-e_1\wedge e_2$. For the second term, we use the Part 2) of Lemma \ref{normal term}.

The claim about the length has been proved in the previous subsection and the proof of Lemma \ref{lem:geodesic} is done.
\end{proof}

\section{An example of Willmore surface}
In this section, we present a family of Willmore surfaces which is model case of what we have proved in this paper.

\subsection{A Willmore surface in $\Real^3$}

The surface is defined on $\Real^2 \setminus \set{0}$ and we use the cylinder coordinate $(t,\theta)$ where $t=\log \abs{x}$. For some parameter $l>0$, define a skew-symmetric matrix
\[
A=
\left(
\begin{array}{ccc}
    0 & 0 & -l  \\
    0 & 0 & 0 \\
    l & 0 & 0
\end{array}
\right).
\]
The surface is parametrized by
\begin{equation}
    \label{eqn:ex3}
    f(t,\theta)= \exp(t A) (e^{-t} \cos \theta, e^{-t} \sin \theta, 0)
\end{equation}
for $(t,\theta)\in \Real \times S^1$. With the simple choice of $A$, we obtain
\[
	f(t,\theta)= (e^{-t} \cos(lt) \cos \theta, e^{-t} \sin \theta, e^{-t} \sin(lt)\cos \theta).
\]
Note that $\exp(tA)$ is a one parameter family of rotations in the direction of $A$ and $l$ is the speed of rotation.

The parametrization is not a conformal one, but we can compute the first fundamental form 
\[
	\left(
	\begin{array}{cc}
		f_t \cdot f_t & f_t \cdot f_\theta \\
		f_\theta \cdot f_t & f_\theta\cdot f_\theta
	\end{array}
	\right)
	=
	\left(
	\begin{array}{cc}
		e^{-2t}(1+ l^2 \cos^2 \theta) & 0 \\
		0 & e^{-2t}
	\end{array}
	\right).
\]
The normal vector is given by
\begin{eqnarray*}
	\n&=&\frac{f_t \times f_\theta}{\abs{f_t \times f_\theta}}\\
		&=&\left(- \frac{l \cos(lt)\cos^2 \theta - \sin (lt)}{\sqrt{1+l^2\cos^2 \theta}},  -\frac{l\cos\theta\sin\theta}{\sqrt{1+l^2\cos^2 \theta}}, -\frac{\cos(lt)+l\sin(lt)\cos^2\theta}{\sqrt{1+l^2\cos^2 \theta}}\right).
\end{eqnarray*}
The second fundamental form is
\begin{eqnarray*}
	A=-
	\left(
	\begin{array}{cc}
		f_t\cdot \n_t & f_t \cdot \n_\theta \\
		f_\theta \cdot \n_t & f_\theta \cdot \n_\theta
	\end{array}
	\right)
	=
	\left(
	\begin{array}{cc}
		e^{-t} l \cos \theta \sqrt{1+l^2\cos^2 \theta} & \frac{e^{-t} l \sin\theta}{\sqrt{1+l^2\cos^2 \theta}} \\
		\frac{e^{-t} l \sin \theta}{\sqrt{1+l^2\cos^2 \theta}} & \frac{e^{-t} l\cos \theta}{\sqrt{1+l^2\cos^2 \theta}}
	\end{array}
	\right)
\end{eqnarray*}
The mean curvature function is
\[
	H=\frac{2e^{t}l \cos\theta}{\sqrt{1+l^2 \cos^2 \theta}}.
\]
We may also verify by direct computation that 
\[
	\Delta H+ \abs{\mathring{A}}^2 H=0.
\]
and hence it is indeed a Willmore surface.

\subsection{The model case of the neck with nonvanishing residue}

For each $k\in \mathbb N$, let $f_k$ be the map given by \eqref{eqn:ex3} with $l=\frac{1}{k}$. Obviously, $f_k$ is a sequence of Willmore surfaces defined on $\Real \times S^1$ satisfying A1)-A3).

We can find the expansion of the some quantities in terms of the power series of $l=k^{-1}$. 
\begin{itemize}
    \item  The parametrization of the surface
\[
f_k(t,\theta) =e^{-t} (\cos \theta, \sin\theta,0) + t e^{-t} (0,0,\cos\theta) k^{-1} + O(k^{-2}).
\]
\item The (scalar) mean curvature
\[
	H_k=  (2 e^{t} \cos \theta) k^{-1}  + O(k^{-3}).
\]
\item The normal vector
\[
	\n_k= (0,0,-1) - (-t+\cos^2 \theta, \sin\theta \cos\theta,0) k^{-1} + O(k^{-2}).
\]
\item The mean curvature vector
\[
	\vec{H}_k= (0,0,-2e^{t}\cos\theta) k^{-1} - 2e^{t}\left(- t\cos\theta+\cos^3\theta, \sin\theta \cos^2\theta,0 \right) k^{-2} + O(k^{-3}).
\]
\item The normalizing constant $\epsilon_k$
\begin{align*}
&\int_{[0,L]\times S^1} \abs{H_k}^2 dV_g = 4\pi L k^{-2}+ O(k^{-3}),\\
& \epsilon_k= 2 k^{-1} + O(k^{-2}).
\end{align*}
\item  
\[
\frac{\n_k-\bar \n_k}{\epsilon_k}= \frac{1}{2}t(1,0,0) -\frac{1}{4}(\cos 2\theta,\sin 2\theta,0)  + O(k^{-1}).
\]
\item The Second fundamental form 
\begin{eqnarray*}
    \hat A_{tt}, \hat A_{\theta\theta} &=& e^{-t}\cos \theta \cdot k^{-1} + O(k^{-3}) \\
    \hat A_{t\theta} &=& e^{-t} \sin \theta \cdot k^{-1} + O(k^{-3}).
\end{eqnarray*}
\end{itemize}
By taking $v_1=(0,0,-1)$ and $v_2=(0,0,0)$, we verify that the above results agree with Theorem \ref{neck.first.order} and Theorem \ref{2nd.asymp.}.

\subsection{Generalizations}

In dimension three, we may also consider some more general matrix $A$. For example, we may introduce one more parameter $\beta\in S^1$ as follows
\[
A=
\left(
\begin{array}{ccc}
    0 & 0 & l\cos \beta  \\
    0 & 0 & l\sin \beta \\
    - l\cos\beta & -l\sin\beta & 0
\end{array}
\right).
\]
It is not difficult to see that \eqref{eqn:ex3} still gives the same surface up to a rotation of $\Real^3$. Moreover, we may consider a covering of the above surface. For example, for $m\in \mathbb N$, the surface 
\[
\tilde{f}(t,\theta)=f(m t, m\theta)
\]
is also Willmore. They also arise as model case in the neck analysis.

We may also consider similar construction in dimension four. In this case, $\dim G(2,4)=4$. The infinitesimal rotation (including the speed) is a vector space of dimension four. Let
\[
	f(t,\theta):=\exp(tA) \left( e^{-mt}\cos m\theta, e^{-mt}\sin m\theta, 0,0 \right)
\]
where
\begin{equation}
	\label{eqn:rotation4}
	A=
	\left(
	\begin{array}{cccc}
		0 & 0 & a & b \\
		0 & 0 & c & d \\
		-a & -c & 0 & 0 \\
		-b & -d & 0 & 0
	\end{array}
	\right).
\end{equation}
It is not easy to verify that this is indeed a Willmore surface in $\R^4$. The problem is the expression of $\exp (tA)$ is too difficult to be computed by hand. However, we did manage to verify the Willmore equation with the help of the computer software: Mathematica and the symmetry of surface. It is worth noting that the software produces symbolic computation, not numerical simulation. Therefore the result is rigorous. 

\begin{rem}
	We definitely want to have the upper-left corner to be zero. If otherwise, the first fundamental form is no longer diagonal. It is a rotation in the plane itself, not interesting. 
\end{rem}
\begin{rem}
	When the upper-left corner is not zero, obviously, our computation fails. But we have no proof of the claim that "they are not Willmore". 	When the lower-right corner is not zero, the formulas used in this computation works, but we haven't run the computer program for it (because it takes too long). Again, we can't say they are not Willmore.
	Our guess is that when they are not zero, they will not give essentially new surfaces. 
\end{rem}

\appendix 

\section{The compactness of Willmore surfaces}
In the first subsection below, we briefly explain why we may assume A1) and A2) in the study of energy idenity problem of Willmore surfaces. In the second subsection, we give consequences of the assumption A3), that will be used in the proof of the main theorem. In the final subsection, we prove a basic lemma about the structure of the curvature of the mean curvature and state without proof some formulas of the residues $\tau_1$ and $\tau_2$ under various transformations.

\subsection{Estimate on the conformal factor}

A natural setting for the compactness problem is to consider a sequence of closed surfaces $\Sigma_k$ with bounded $\norm{A_k}_{L^2}$. Simon's monotonicity inequality \cite{Simon} ensures the existence of $\Lambda$ such that the following holds for each $f_k$ uniformly
\begin{equation}\label{noncollapsing}
\frac{\mu(f^{-1}(B_r^n(y)))}{\pi r^2}<\Lambda,\s \forall y\in\R^n,\s r\in\R^+.
\end{equation}

The upper bound \eqref{noncollapsing} allows us to obtain uniform upper bound for the gradient of the conformal factor. Here we present a proof using the blowup technique as in \cite{Li-Wei-Zhou}.
\begin{lem}
Let $f$ be a conformal and Willmore immersion from $D$ into $\R^n$. Assume \eqref{noncollapsing} holds. Then there exist constants $C$ and $\epsilon_0'$, such that if $\int_D|A|^2 d\mu<\epsilon_0'$, then
$$
\|\nabla u\|_{C^0(D_\frac{1}{2})}<C.
$$
\end{lem}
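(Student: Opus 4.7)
The plan is a contradiction argument via point-picking and a blowup analysis. Assume the conclusion fails, so there is a sequence of conformal Willmore immersions $f_k:D\to\R^n$ with a common constant $\Lambda$ in \eqref{noncollapsing}, with $\int_D|A_k|^2\,dV_{g_k}\to 0$, but $\|\nabla u_k\|_{C^0(D_{1/2})}\to\infty$. Point-pick on the function $x\mapsto|\nabla u_k(x)|\,(3/4-|x|)$ on $\overline{D_{3/4}}$ to obtain $p_k\in D_{3/4}$ and $\lambda_k:=|\nabla u_k(p_k)|$ with $\lambda_k r_k\to\infty$, where $r_k:=3/4-|p_k|$, and with $|\nabla u_k|\le 2\lambda_k$ throughout $D_{r_k/2}(p_k)$. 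Define the rescaled immersion
\[
\tilde f_k(y):=\lambda_k e^{-u_k(p_k)}\bigl(f_k(p_k+y/\lambda_k)-f_k(p_k)\bigr),\qquad y\in D_{R_k},\ R_k:=\lambda_k r_k/2,
\]
with $R_k\to\infty$ and conformal factor $\tilde u_k(y)=u_k(p_k+y/\lambda_k)-u_k(p_k)$ satisfying $\tilde u_k(0)=0$, $|\nabla\tilde u_k(0)|=1$, $|\nabla\tilde u_k|\le 2$, and $\tilde f_k(0)=0$. The Willmore energy $\int_{D_R}|\tilde A_k|^2\,dV_{\tilde g_k}$ on any fixed $D_R$ tends to zero, and $\tilde f_k$ inherits \eqref{noncollapsing} with the same $\Lambda$ since the density ratio is scale invariant.

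Next I extract a smooth limit. The $\varepsilon$-regularity for Willmore surfaces (Kuwert-Sch\"atzle), applied to the smallness of $\int_{D_R}|\tilde A_k|^2\,dV_{\tilde g_k}$, yields $\|\tilde A_k\|_{L^\infty(D_{R/2})}\to 0$. Since $|\tilde u_k(y)|\le 2|y|$ locally, the Gauss equation $|\Delta\tilde u_k|=|\tilde K_k|e^{2\tilde u_k}\le|\tilde A_k|^2 e^{2\tilde u_k}$ gives $\|\Delta\tilde u_k\|_{L^\infty(D_{R/2})}\to 0$, so $\tilde u_k\to\tilde u$ in $C^{1,\alpha}_{loc}(\R^2)$ with $\tilde u$ harmonic, $\tilde u(0)=0$, $|\nabla\tilde u(0)|=1$, and $|\nabla\tilde u|\le 2$. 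Combining this with the fourth-order Willmore equation for $\tilde f_k$, the local $L^\infty$ bound on $\tilde u_k$, and $\tilde f_k(0)=0$, a standard elliptic bootstrap produces a subsequential smooth limit $\tilde f:\R^2\to\R^n$, a conformal Willmore immersion with $A_{\tilde f}\equiv 0$, still satisfying the density bound \eqref{noncollapsing} with constant $\Lambda$.

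The main obstacle is the final rigidity step, which is where the non-collapsing assumption is indispensable. Since $A_{\tilde f}\equiv 0$, the image of $\tilde f$ lies in a $2$-plane $P\cong\mathbb{C}$, and $\tilde f$ corresponds to an entire holomorphic function $\Phi:\mathbb{C}\to\mathbb{C}$ with $\Phi'$ nowhere vanishing; thus $\Phi'=e^{G}$ for some entire $G$ and $\tilde u=\mathrm{Re}\,G$. The Cauchy-Riemann equations give $|G'|=|\nabla\tilde u|\le 2$, and Liouville's theorem forces $G'\equiv\alpha$ with $|\alpha|=|\nabla\tilde u(0)|=1$. If $\alpha\ne 0$, $\Phi$ is essentially of exponential type, so every value in its image has infinitely many preimages, making $\mu_{\tilde g}(\tilde f^{-1}(B_r(0)))=\infty$ for every $r>0$ and contradicting the inherited \eqref{noncollapsing}. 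Hence $\alpha=0$, forcing $\tilde u\equiv 0$ and contradicting $|\nabla\tilde u(0)|=1$. This Liouville-plus-density step is the crux; everything else is a standard scaling and $\varepsilon$-regularity bootstrap.
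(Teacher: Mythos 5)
Your proposal is correct and follows essentially the same route as the paper: a contradiction argument with point-picking on $(\rho-|x|)|\nabla u_k|$, rescaling to get a gradient bound on exhausting balls, $\epsilon$-regularity and the Gauss equation to produce a flat limit with linear (nonconstant) conformal factor, i.e.\ $e^{z}$ up to rigid motions, which violates the inherited density bound \eqref{noncollapsing}. The only differences are cosmetic: the paper phrases the rigidity step via Liouville for the harmonic $\nabla\tilde v$ rather than for $G'$, and obtains the contradiction by exhibiting a radius $R$ with density ratio exceeding $2\Lambda$ rather than by noting the preimage of a fixed ball has infinite measure.
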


\begin{proof}
Assume the Lemma is not true. Then we can find $f_k$ such that $\int_D|A_k|^2d\mu_k\rightarrow 0$ and
$$
\|\nabla u_k\|_{C^0(D_\frac{1}{2})}\rightarrow+\infty.
$$
Set 
$$
\rho_{k}=\sup_{x \in D_{2/3}}(2/3-|x|)|\nabla u_k|(x).
$$
Then we can find $x_k\in D_\frac{2}{3}$, such that $\rho_k=(\frac{2}{3}-|x_k|)  |\nabla u_k|(x_k)$. Define 
$r_k=1/|\nabla u_k(x_k)|$.

Since 
$$
\rho_k\geq(2/3-1/2)\|\nabla u_k\|_{C^0(D_\frac{1}{2})}\rightarrow+\infty,
$$
we have
$$
\frac{r_k}{2/3-|x_k|}=\frac{1}{\rho_k}\rightarrow 0,
$$
which implies that $D_{Rr_k}(x_k)\subset D_\frac{2}{3}$ for any fixed  $R$, when $k$
is sufficiently large. 

Now consider a scaling and translation of $f_k$ by
\[
\tilde f_k(x) = e^{-u_k(x_k)-\log r_k}(f_k(x_k+r_k x)-f_k(x_k))
\]
and $v_k$ is defined by $g_{F_k}=e^{2v_k}g_{euc}$, i.e.
\[
v_k(x)= u_k(r_k x + x_k) - u_k(x_k).
\]
$\tilde f_k$ and $v_k$ are defined on any $D_R$ as long as $k$ is large. We claim that for large $k$
\[
\sup_{x\in D_R}\abs{\nabla v_k}(x) \leq 2.
\]
In fact, for large $k$ and any $x\in D_R$, $x_k+ r_k x \in D_{\frac{2}{3}}$, therefore
$$
(\frac{2}{3}-|x_k|) |\nabla u_k|(x_k)\geq (2/3-|x_k+r_k x|)|\nabla u_k|(x_k+ r_k x),
$$
which yields that
\begin{eqnarray*}
\abs{\nabla v_k}(x) &=& r_k|\nabla u_k(x_k+r_kx)| \\
&\leq& \frac{\frac{2}{3}-|x_k|}{2/3-|x_k+r_kx|} \\
&\leq& \frac{\frac{2}{3}-|x_k|}{\frac{2}{3}-|x_k|-r_k R} \\
&=&\frac{1}{1-\frac{r_k}{2/3-|x_k|}R}<2\\
&\leq& 2. 
\end{eqnarray*}
when $k$ is sufficiently large.

Applying the $\epsilon$-regularity, we may assume $\tilde f_k$ converges in $C^\infty_{loc}(\R^2)$ to a conformal and Willmore immersion $\tilde f$ from $\C$ into $\R^n$ with $g_{\tilde f}=e^{2\tilde v}g_{euc}$. It is easy to check that $A_{\tilde f}=0$ and $|\nabla \tilde v|<2$, $|\nabla \tilde v|(0)=1$. By Gauss curvature equation, $\tilde v$ is harmonic and therefore $\tilde v(x)=ax^1+bx^2$ with $a^2+b^2=1$. Choosing new coordinates, we may assume $\tilde f=e^{z}$ which is a map from $\R^2$ to $\R^2$. We can choose $R$, such that 
$$
\frac{\mu_{f_k}(f_k(D_{Rr_k}(x_k))\cap B_{Rr_k}^n(x_k))}{\pi (Rr_k)^2}=\frac{\mu_{\tilde f_k}(\tilde f_k(D_R)\cap B_R^n(0))}{\pi R^2}\rightarrow\frac{\mu_{\tilde f}(\tilde f(D_R)\cap B_R^n(0))}{\pi R^2}>2\Lambda,
$$
which leads to a contradiction to \eqref{noncollapsing}.
\end{proof}
\begin{rem}
The same estimate as above can be proved using some uniform on the gradient of the Green's function (see \cite{L-R2}).
\end{rem}

With this control over $\abs{\nabla u}$, we can use (after some proper scaling) the famous $\epsilon$-regularity theorem for the Willmore equation:
\begin{thm}\cite{K-S1,R}
Let $f$ be a conformal and Willmore immersion from $D$ into $\R^n$ with $g=e^{2u}g_{euc}$. Assume $|u|<\beta$. Then, there exists $\epsilon_0>0$, such that if $\int_D|A|^2d\mu<\epsilon_0$, then 
$$
\|\nabla^k \n\|_{L^\infty(D_\frac{1}{2})}<C\|A\|_{L^2(D)}
$$
for some constant $C$ depending on $\beta$.
\end{thm}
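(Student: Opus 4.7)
The plan is to use the $L^2$ smallness of $A$ together with the conformal structure to first obtain an $L^\infty$ estimate on the second fundamental form (equivalently, on $\nabla \n$), and then bootstrap through the Willmore system for higher regularity. Since $|u|<\beta$, the metric $e^{2u}g_{euc}$ is uniformly equivalent to the Euclidean one, so I can work with Euclidean norms at the cost of constants depending only on $\beta$.

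First I would write down the PDEs. The Gauss map $\n$ satisfies an equation of the form
\[
\Delta \n = A_{G(2,n)}(d\n,d\n) + e^{2u}\bigl(\nabla^\perp_1 H \wedge f_2 - \nabla^\perp_2 H \wedge f_1\bigr),
\]
and the mean curvature satisfies the Willmore equation $\Delta^\perp H + Q(A)H = 0$ with $Q(A)$ quadratic in $A$. The Gauss-map equation is $L^2$-critical: the energy $\int |d\n|^2 \sim \int |A|^2$ matches exactly the scaling of the nonlinearity, so the naive Sobolev bootstrap gives nothing and one must exploit the Jacobian structure of $A_{G(2,n)}(d\n,d\n)$.

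The crucial step is therefore a critical-regularity argument on the Gauss map, using the smallness of $\int_D |A|^2$. I would apply either Hélein's moving-frame technique (choosing a Coulomb gauge on the normal bundle so that the nonlinearity is recast as a Jacobian and estimated via Wente's inequality), or, more directly, Rivière's conservation-law reformulation of the Willmore equation, which rewrites the fourth-order system as a first-order elliptic system whose right-hand side lies in the Hardy space $\mathcal{H}^1$. Either approach, combined with a small-energy fixed-point argument, yields
\[
\|\nabla \n\|_{L^\infty(D_{3/4})} \leq C \|A\|_{L^2(D)},
\]
provided $\epsilon_0$ is chosen small enough to absorb the nonlinear constants.

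Once $A$ is bounded in $L^\infty(D_{3/4})$, the system becomes subcritical. I would then differentiate the Willmore equation and the Gauss-map equation and apply standard $W^{k,p}$ elliptic regularity on shrinking subdomains, obtaining inductively $\|\nabla^k \n\|_{L^\infty(D_{1/2})} \leq C_k \|A\|_{L^2(D)}$. The main obstacle is precisely the critical step: the subsequent bootstrap is routine elliptic regularity, but without the Hardy-BMO duality of Coifman-Lions-Meyer-Semmes (entering through Wente's estimate) or Rivière's conservation laws, one cannot pass from the $L^2$ smallness of $A$ to an $L^\infty$ bound. Picking $\epsilon_0$ small enough to run whichever of these machineries one prefers is the quantitative heart of the argument.
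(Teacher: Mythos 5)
This statement is not proved in the paper at all: it is quoted as a known $\epsilon$-regularity theorem with a citation to Kuwert--Sch\"atzle and Rivi\`ere, so there is no internal proof to compare against. Judged against the cited literature, your outline is the standard one and is essentially correct: the $L^2$ norm of $A$ (equivalently of $d\n$) is conformally invariant, so the problem is critical, and one must exploit the compensated-compactness structure of the system --- either H\'elein-style Coulomb frames plus Wente/Coifman--Lions--Meyer--Semmes, or Rivi\`ere's conservation laws --- to convert small energy into an $L^\infty$ bound on $\nabla\n$ with linear dependence on $\|A\|_{L^2}$, after which the system is subcritical and Schauder/$W^{k,p}$ bootstrapping on shrinking disks gives all higher derivatives. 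This is precisely the route of \cite{R} (Kuwert--Sch\"atzle's original argument is somewhat different in flavor, proceeding via localized integral estimates and power decay of $\int_{B_\rho}|A|^2$, but leads to the same conclusion).

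One point you should make explicit if you were to write this out in full: the bootstrap is for a coupled system in $(\n,H,f,u)$, not for $\n$ alone. The equations you differentiate contain $e^{2u}$ and $\partial f$, so you need to propagate regularity of the conformal factor and of the immersion alongside that of $\n$. The hypothesis $|u|<\beta$ only gives an $L^\infty$ starting point; one then uses the Liouville equation $-\Delta u=Ke^{2u}$ (whose right-hand side is controlled pointwise by $|A|^2$ once the critical step is done) to get $u\in W^{2,p}_{loc}$ and then $C^{1,\alpha}_{loc}$, and $\Delta f=e^{2u}\vec H$ to control $f$. With that added, the induction closes and the linear bound $\|\nabla^k\n\|_{L^\infty(D_{1/2})}\le C(k,\beta)\|A\|_{L^2(D)}$ follows, since all nonlinear contributions carry at least one extra factor of $\|A\|_{L^2}<\sqrt{\epsilon_0}$. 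This is a standard but non-optional piece of the argument rather than a flaw in your strategy.
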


Then by a standard bubble tree arguments (\cite{Chen-Li} for example), we can divide $\Sigma_k$ into finitely many parts $\Sigma_k^1$, $\cdots$, $\Sigma_k^{j_0}$, $\Sigma_k^{j_0+1}$, $\cdots$,
$\Sigma_k^{j_1}$, $\Sigma_k^{j_0+1}$, $\cdots$,
$\Sigma_k^{j_2}$, such that
\begin{itemize}
\item [1)] After tanslation and rescaling, $\Sigma_k^i$ converges to a nontrivial Willmore surface in $C^\infty_{loc}(\R^n\setminus\mathcal S_i)$ for $i=1$, $\cdots$, $j_0$, where $\mathcal S_i$ is a finite set.
\item[2)]  After tanslation and rescaling, $\Sigma_k^i$ converges to a plane in $C^\infty_{loc}(\R^n\setminus\mathcal S_i)$ for $i=j_0+1$, $\cdots$, $j_1$. (We usually call them `ghost bubbles').
\item[3)] For $j>j_1$, $\Sigma_k^j$ is a image of a conformal map $f_{kj}$ from $[0,T_k]\times S^1$ into $\R^n$, such that $f_{kj}$ has no bubbles. (We usually call them the `neck' part). 
\end{itemize}

As a consequence of the above construction, to prove the energy identity, it is suffice to study a conformal and Willmore immersion $f_k$ from $[0,T_k]\times S^1$ into $\R^n$ under the assumption that $f_k$ has no bubble. The assumption A2) is a consequence of this no more bubble assumption.

For a sequence of $f_k$ satisfying A2), similar to Lemma 5.2 of \cite{Li-Yin}, by shrinking the length of the cylinder by a finite amount, we may assume
\[
\norm{\nabla v_k}_{L^\infty([0,T_k]\times S^1)}\to 0.
\]
By a scaling of $f_k$ if necessary, we have A1).

\subsection{Consequences of the assumption A3)}
For any sequence $f_k$ satisfying A1) and A2), we may always assume the extra A3) by a translation of $f_k$, because all the results in the main theorems are invariant under translations. Nonetheless, A3) is necessary if we want to obtain good control over $f_k$.

\begin{lem}\label{mean.value}
Assume $f_k$ is a sequence of conformal and Willmore immersions from $[0,m_kL]\times S^1\rightarrow\R^n$ satisfying A1) and A2). Let 
$$
u_k^*=\frac{1}{2\pi}\int_0^{2\pi}u_k(t,\theta)d\theta,\s and\s 
f^*=\frac{1}{2\pi}\int_0^{2\pi}f_k(t,\theta)d\theta.$$
Then 

(1) for any $t\in [0, m_k L]$,
\[
u_k(t,\theta)\le -\frac{mt}{2}+2.
\]
In particular
\[
\text{Vol}(f_k,[0,m_kL]\times S^1)\le \frac{2 \pi e^4}{m}.
\]

(2) for any $1<t<T\leq m_kL -1$, we have
\begin{equation}
    \label{eqn:mapcenter}
e^{-u_k^*(t)}|f_k^*(t)-f_k^*(T)|\le C[\sqrt{\Theta_k}+(T-t)e^{-\frac{m}{2}(T-t)}],
\end{equation}

\end{lem}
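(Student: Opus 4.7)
The plan is to treat the two parts separately. Part (1) is essentially immediate from A1), while Part (2) requires combining the Willmore PDE (averaged over $\theta$) with the $\epsilon$-regularity bound on the mean curvature.

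For Part (1), I would write $u_k = -mt + v_k$ and bound $v_k$ by integrating the uniform gradient decay: $v_k(t,\theta) \leq v_k(0,\theta) + \|\nabla v_k\|_\infty \cdot t \leq 1 + \|\nabla v_k\|_\infty t$. For $k$ large enough that $\|\nabla v_k\|_\infty \leq m/2$, this gives $u_k(t,\theta) \leq -mt/2 + 1 \leq -mt/2 + 2$, and integrating $e^{2u_k} \leq e^{-mt + 4}$ over $[0,m_kL]\times S^1$ yields the volume bound $2\pi e^4/m$.

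For Part (2), the conformal Willmore equation in our coordinates reads $\Delta f_k = 2e^{2u_k}H_k$ (with $\Delta = \partial_t^2 + \partial_\theta^2$). Averaging over $\theta$ kills the $\partial_\theta^2$ contribution by periodicity and produces the second-order ODE
$$\frac{d^2 f_k^*}{ds^2}(s) = \frac{1}{\pi}\int_0^{2\pi} e^{2u_k(s,\theta)} H_k(s,\theta)\, d\theta.$$
The $\epsilon$-regularity for Willmore immersions, applied in scale-invariant form on unit cylinders (which is why we require $t\geq 1$ and $T \leq m_k L - 1$), gives $e^{u_k(s,\theta)} |H_k(s,\theta)| \leq C\sqrt{\Theta_k}$. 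Meanwhile, A1) together with the path estimate $|v_k(s,\theta) - v_k^*(t)| \leq \|\nabla v_k\|_\infty(|s-t| + 2\pi)$ yields, for $k$ large and $s \geq t$, the crucial comparison
$$e^{u_k(s,\theta)} \leq C\, e^{u_k^*(t)}\, e^{-m(s-t)/2}.$$
Together these give $\bigl|\tfrac{d^2 f_k^*}{ds^2}(s)\bigr| \leq C\sqrt{\Theta_k}\, e^{u_k^*(t)}\, e^{-m(s-t)/2}$.

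To close the argument, I would use conformality $|\partial_t f_k| = e^{u_k}$ to bound $\bigl|\tfrac{d f_k^*}{ds}(T)\bigr| \leq Ce^{u_k^*(t)} e^{-m(T-t)/2}$ via the same comparison, and then integrate the ODE back from $T$ to obtain $\bigl|\tfrac{d f_k^*}{ds}(s)\bigr| \leq Ce^{u_k^*(t)}\bigl[e^{-m(T-t)/2} + \sqrt{\Theta_k}\, e^{-m(s-t)/2}\bigr]$. One further integration over $[t,T]$ followed by division by $e^{u_k^*(t)}$ yields exactly the stated bound, since $\int_t^T e^{-m(s-t)/2}\, ds \leq 2/m$. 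The main technical subtlety is the relative comparison between $e^{u_k(s,\theta)}$ and $e^{u_k^*(t)}$: working with ratios is essential because the absolute estimate $e^{u_k(s,\theta)} \leq e^{-ms/2 + C}$ loses a factor that may blow up as $T_k\to\infty$, whereas the relative bound costs only a universal constant coming from the oscillation of $v_k$ on a unit cylinder.
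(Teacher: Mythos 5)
Your proposal is correct and follows essentially the same route as the paper's proof: Part (1) is read off from A1), and Part (2) integrates the averaged equation $\Delta f_k = e^{2u_k}H_k$ twice from $T$ backward, using the $\epsilon$-regularity bound $e^{u_k}|H_k|\le C\sqrt{\Theta_k}$ on the interior, the conformality $|\partial_t f_k|=e^{u_k}$ to control $\partial_t f_k^*(T)$, and the relative comparison $e^{u_k(s,\theta)}\le Ce^{u_k^*(t)}e^{-m(s-t)/2}$. The only cosmetic difference is the factor $2$ in your form of the equation (a mean-curvature normalization convention), which is harmlessly absorbed into the constant.
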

\begin{proof}

(1) By A1), $(u^*_k)'<-\frac{m}{2}$ when $k$ is sufficiently large, which implies that
\begin{equation}
    \label{eqn:ukst}
u_k^*(s)-u_k^*(t)\leq -\frac{m}{2}(s-t),\s \forall t<s.
\end{equation}
Also by A1),  we have 
\begin{align*}
u_k(s,\theta)\le u_k^*(s)+\text{osc}_{\{s\}\times S^1}v_k\le-\frac{ms}{2}+ u_k^*(0) +2\pi\|\nabla v_k\|\le  -\frac{ms}{2}+2, 
\end{align*}
and hence 
\begin{align*}
\text{Vol}(f_k,[0,m_kL]\times S^1)=\int_{[0,m_kL]\times S^1}e^{2u_k} dtd\theta\le \frac{2\pi e^4}{m}.
\end{align*}

(2)
By the $\epsilon$-regularity of Willmore surface, we have $|e^{u_k}H_k|(t,\theta) <C\sqrt{\Theta_k}$ for $t\in [1,m_kL-1]$, which implies that for $1\leq t\leq T\leq m_kL-1$,
\begin{eqnarray*}
|\partial_t f_k^*(t)-\partial_tf_k^*(T)|&=&|\int_t^T\int_0^{2\pi}\Delta f d\theta ds|\\
&=&|\int_t^T\int_0^{2\pi}e^{2u_k}H_k d\theta ds| \leq C\sqrt{\Theta_k}\int_t^{T}e^{u_k^*(s)} ds.
\end{eqnarray*}
Here in the last line above, we have used the fact that $\abs{\partial_\theta u_k}\leq C$.
By \eqref{eqn:ukst},
\begin{eqnarray*}
|\partial_t f_k^*(t)|
&\leq& 
Ce^{u_k^*(t)}\sqrt{\Theta_k}\int_t^Te^{-\frac{m}{2}(s-t)}ds + \abs{\partial_tf_k^*(T)}\\
&\leq& Ce^{u_k^*(t)}\sqrt{\Theta_k}+ \abs{\partial_tf_k^*(T)}.
\end{eqnarray*}
Using the definition of $f_k^*$, we estimate the second term above as
\begin{eqnarray*}
    \abs{\partial_tf_k^*(T)} &\leq& C \int_0^{2\pi} \abs{\partial_t f_k}(T,\theta) d\theta \\
    &\leq& C u_k^*(T).
\end{eqnarray*}
Integrating over $[t,T]$, we obtain
$$
|f_k^*(t)-f_k^*(T)|\leq C\sqrt{\Theta_k}\int_t^Te^{u_k^*(s)}ds+ C u_k^*(T)(T-t).
$$
Multiplying both sides by $e^{-u_k^*(t)}$ and using \eqref{eqn:ukst} again, we have
\begin{eqnarray*}
e^{-u_k^*(t)}|f_k^*(t)-f_k^*(T)|&\leq& C\sqrt{\Theta_k}\int_t^Te^{u_k^*(s)-u_k^*(t)}ds+ C e^{u_k^*(T)-u_k^*(t)}(T-t)\\
&\leq& C\sqrt{\Theta_k}\int_t^Te^{-\frac{m}{2}(s-t)}ds+
C (T-t)e^{-\frac{m}{2}(T-t)},
\end{eqnarray*}
which yields the lemma.
\end{proof}

\begin{cor}\label{cor:lambdak}
Assume $f_k$ is a sequence of conformal and Willmore immersions from
$[0,m_kL]\times S^1\rightarrow\R^n$ satisfying A1)-A3).
Then, we have 
\begin{equation}
    \label{eqn:diam}
\lim_{k\to \infty}\max_{1\le t\le (m_k-1)L}e^{-u_k^*(t)} |f_k^*(t)|=0,
\end{equation}
and 
\begin{equation}
    \label{Lambda_k}
\lim_{k\to \infty}\sup_{2\le i\le m_k-2}\int_{Q_i}e^{-2u_k}|f_k-\G_i(f_k)|^2 dtd\theta=0.
\end{equation}
\end{cor}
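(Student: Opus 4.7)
My plan is to derive both statements from Lemma~\ref{mean.value}(2) and A3) via compactness arguments that exploit two algebraic properties of the model $f_\infty(t,\theta) = \tfrac{1}{m}e^{-mt}(\cos m\theta, \sin m\theta, 0,\ldots, 0)$: first, $f_\infty^* \equiv 0$ because $\int_0^{2\pi} \cos m\theta\, d\theta = \int_0^{2\pi}\sin m\theta\, d\theta = 0$ for $m\ge 1$; and second, $\G_i(f_\infty) = f_\infty$ since $\G_i$ projects onto exactly the $e^{-mt}\cos m\theta$ and $e^{-mt}\sin m\theta$ modes. Assumption A3) gives $f_k^*(T_k - 1) = 0$ where $T_k = m_k L$.

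For \eqref{eqn:diam}, let $t_k \in [1,(m_k-1)L]$ attain the maximum and set $d_k := T_k - 1 - t_k \ge L - 1$. Lemma~\ref{mean.value}(2) with $T = T_k - 1$ gives
\[
M_k := e^{-u_k^*(t_k)}|f_k^*(t_k)| \le C\bigl[\sqrt{\Theta_k} + d_k e^{-m d_k/2}\bigr].
\]
Along a subsequence with $d_k \to \infty$, both terms on the right vanish directly. Otherwise $d_k$ is bounded; after passing to a further subsequence with $d_k \to d_\infty$, I would consider the rescaled maps $\tilde f_k(s,\theta) := e^{-u_k^*(t_k)} f_k(t_k+s,\theta)$ and $\tilde u_k := u_k(t_k+s,\theta) - u_k^*(t_k)$, so that $\tilde u_k^*(0) = 0$. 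The a priori bound $M_k \le C$ together with the conformal estimate $|\partial \tilde f_k| = e^{\tilde u_k}$ (locally bounded by A1)) makes $\tilde f_k$ uniformly $C^{0,1}_{\mathrm{loc}}$-bounded; by A1), A2) and the $\epsilon$-regularity for Willmore surfaces, a subsequence converges in $C^\infty_{\mathrm{loc}}$ to a flat conformal Willmore immersion with conformal factor $e^{-ms}$, necessarily of the form $R f_\infty + c$ for some rotation $R$ of $\R^n$ and translation $c \in \R^n$. The normalization $\tilde f_k^*(d_k) = e^{-u_k^*(t_k)} f_k^*(T_k-1) = 0$ from A3), combined with $f_\infty^* \equiv 0$, forces $c = 0$. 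Evaluating at $s = 0$ then yields $M_k = |\tilde f_k^*(0)| \to (R f_\infty)^*(0) = 0$.

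For \eqref{Lambda_k}, I would argue by contradiction along a sequence $i_{k_j} \in [2,m_k-2]$. A direct computation using linearity of $\G_{i_k}$, its vanishing on constants (since $\int_0^{2\pi}\cos m\theta\, d\theta = \int_0^{2\pi}\sin m\theta\, d\theta = 0$), and the change of variables $\tau = t - (i_k - 1)L$ gives $\G_{i_k}(f_k)(t,\theta) = e^{c^{i_k}} \G_1(\hat f_k)(\tau,\theta)$, and hence
\[
\int_{Q_{i_k}} e^{-2u_k}|f_k - \G_{i_k} f_k|^2\, dt\, d\theta = \int_{Q_1} e^{-2\hat u_k}\bigl|\hat f_k - \G_1 \hat f_k + e^{-c^{i_k}} f_k^*((i_k-1)L)\bigr|^2\, d\tau\, d\theta.
\]
The translation error $e^{-c^{i_k}} f_k^*((i_k-1)L)$ is $o(1)$ by \eqref{eqn:diam}, and $e^{-2\hat u_k}$ is uniformly bounded on $Q_1$ by A1), so by the triangle inequality it suffices to show $\int_{Q_1} e^{-2\hat u_k}|\hat f_k - \G_1 \hat f_k|^2\, d\tau\, d\theta \to 0$. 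The construction \eqref{eqn:fki} builds in $\hat f_k^*(0) = 0$; combined with A1), A2), this yields $C^\infty_{\mathrm{loc}}$-convergence of $\hat f_k$ to a pure rotation $R f_\infty$ (no translation, by the same use of $f_\infty^* \equiv 0$ that pinned down the constant in the first part). Since $\G_1$ acts mode-by-mode in $\theta$ and thus commutes with rotations, and since $\G_1 f_\infty = f_\infty$, the integrand tends to $0$ pointwise on $Q_1$, and dominated convergence closes the argument. The main technical subtlety in both steps is that the local convergence $\hat f_k \to R f_\infty$ must hold uniformly over the choice of base point, including when $i_{k_j}$ sits near the endpoints $2$ or $m_k - 2$; this is built into the rescaled formulation, since $Q_1$ is a fixed compact set after rescaling and the estimates from A1), A2) localize correctly there independently of where $i_{k_j}$ lies in the cylinder.
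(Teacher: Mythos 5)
Your proposal is correct and follows essentially the same route as the paper: both parts rest on Lemma \ref{mean.value}(2) together with A3), the dichotomy $d_k\to\infty$ versus $d_k$ bounded, and a blow-up/compactness argument identifying the rescaled limit with $Rf_\infty$ so that $f_\infty^*\equiv 0$ and $\G_1(f_\infty)=f_\infty$ finish the job. The only (cosmetic) difference is in \eqref{Lambda_k}, where you subtract the mean in $\hat f_k$ and control the resulting translation error via \eqref{eqn:diam}, while the paper rescales without subtracting the mean and uses \eqref{eqn:diam} to kill the translation in the limit.
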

\begin{proof} We first argue by contradiction to show \eqref{eqn:diam}. If \eqref{eqn:diam} is not true, then there exists $t_k\in [1,(m_k-1)L)$ such that 
$$e^{-u_k^*(t_k)} |f_k^*(t_k)| \to \eta>0.$$
If $m_kL-t_k\to \infty$, then \eqref{eqn:diam} follows from A3) and  Lemma \ref{mean.value} by taking $t=t_k$ and $T=m_k L-1$.  Hence, we may assume that $\sup_{k}(m_k L-1)-t_k<+\infty$. Without loss of generality, we assume $s_k:=(m_kL -1)-t_k\to s_\infty$. Then by A1)-A3) and the $\epsilon$-regularity of Willmore immersions,  we know that
$$e^{-u_k^*(t_k)}f_k(t_k+t,\theta)\to cRf_\infty+v, \text{  on } [-1,s_\infty+\frac{1}{2}]\times S^1$$
for some  $c> 0, R\in SO(n), v\in \mathbb{R}^n$, where  $f_\infty(t,\theta)=\frac{1}{m}e^{-mt}(e_1\cos m\theta+e_2\sin m\theta)$.  Noticing that
$$\lim_{k\to \infty} |e^{-u_k^*(t_k)}\partial_t f_k(t_k,\theta)|=e^{u_k(t_k,\theta)-u_k^*(t_k)}=1,$$
we find that $c=1$.  Moreover, by A3) and the fact that $t_k+s_k= m_k L -1$, we have
$$\lim_{k\to \infty}e^{-u_k^*(t_k)}f^*_k(t_k+s_k)  = v =0.$$
This is a contradiction, because
$$0<\eta=\lim_{k\to\infty} e^{-u_k^*(t_k)}|f_k^*(t_k)|= |Rf_\infty^*(0)|=0.$$
Thus, we have proved \eqref{eqn:diam}

For the proof of \eqref{Lambda_k}, it suffices to show that for any sequence $i_k\in [2,m_k-2]$, there holds 
\begin{equation}
    \label{eqn:fkG}
\lim_{k\to \infty}\int_{Q_{i_k}}e^{-2u_k}|f_k-\G_{i_k}(f_k)|^2 dtd\theta=0.
\end{equation}
For this purpose, we consider the same scaling as above. Namely, by setting $t_k=(i_k-1)L$, we consider
\[
\tilde f_k (t,\theta) = e^{-u^*_k(t_k)} f_k(t+t_k,\theta).
\]
The difference is that we now have \eqref{eqn:diam}, with the help of which we may derive from A1) and A2) that
\[
\tilde f_k \to cR f_\infty \quad \text{on} \quad [-L,L],
\]
for $c,R$ and $f_\infty$ as before. For the same reason as above, we have $c=1$.
On the other hand,
\[
\int_{Q_{i_k}}e^{-2u_k}|f_k-\G_{i_k}(f_k)|^2  dtd\theta= \int_{Q_1} e^{-2\tilde u_k} \abs{\tilde f_k- \G_1(\tilde f_k)}^2 dtd\theta.
\]
Since $f_\infty -\G_1(f_\infty)=0$, the limit of the right hand side vanishes. Hence \eqref{eqn:fkG} (and therefore \eqref{Lambda_k}) is proved.
\end{proof}

\subsection{The equation of mean curvature}

The following lemma shows how the equation for the mean curvature of a Willmore surface is of the form of the nonlinear equation discussed in Section \ref{sub:nonlinear}.

\begin{lem}\label{equ. struc.} Let $f$ be a conformal and Willmore immersion from $[a,b]\times S^1\rightarrow\R^n$ and $u$ be defined by $df\otimes df=e^{2u}(dt^2+d\theta^2)$. Assume that  
$$ u\le C, \s \|\nabla u\|_{L^\infty([a,b]\times S^1)}\le C\s \text{ and } \int_{[a,b]\times S^1}|A|^2dV_g\le \epsilon $$
for some $C>0$ and $\epsilon$ sufficiently small.
Then 
$$
|\Delta  H|\leq \alpha \cdot (|H|+ |\nabla H|)
$$
with 
$$
\|\alpha\|_{L^\infty([a+\delta,b-\delta]\times S^1)}<C(\delta) \|A\|_{L^2([a,b]\times S^1)}
$$
for small $\delta>0$.
\end{lem}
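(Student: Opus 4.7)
The strategy is to rewrite the Willmore equation as a quasi-linear elliptic equation for $H$ with coefficients controlled by $A$ (and its derivatives), and then invoke the $\varepsilon$-regularity to bound those coefficients in $L^\infty$ by $\|A\|_{L^2}$.

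Concretely, I would start from the Willmore equation stated in Section \ref{sec:limit},
\[
	2\Delta H + 4\,\mathrm{div}(H\cdot A_{pq} g^{ip}\partial_i f) - \mathrm{div}(|H|^2\nabla f)=0,
\]
and isolate $\Delta H$ on one side. Since $g^{ip}=e^{-2u}\delta^{ip}$ in the conformal gauge, expanding each divergence with the product rule produces terms involving $\partial H$, $H$, and derivatives of the coefficients $A_{pq}$, $e^{-2u}$, and $\partial_i f$. The key algebraic observation is that whenever a term contains the second-order derivative $\partial^2_{ij}f$, we can decompose it as $\partial^2_{ij}f = A_{ij}+\Gamma^k_{ij}\partial_kf$ and use the orthogonality $H\cdot\partial_k f=0$ to kill the tangential part, leaving only a term quadratic in $A$ (times $H$). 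Similarly, each $\partial e^{-2u}$ produces a factor of $\nabla u$ (bounded by hypothesis), each $\partial A_{pq}$ produces a $\nabla A$, and each $\partial_i f$ gives an $e^u$.

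After all these elementary manipulations, $\Delta H$ is expressed as a sum of terms of the following schematic shape:
\[
	\Delta H = \sum_j \mathcal{C}_j(A,\nabla A,\nabla u,e^{\pm u})\cdot \bigl(H \text{ or } \nabla H\bigr),
\]
where each coefficient $\mathcal{C}_j$ is a polynomial expression in $A$, $\nabla A$, $\nabla u$, $e^{u}$, and $e^{-u}$ in which \emph{every} monomial contains at least one factor of $A$ or $\nabla A$. The hypotheses $u\leq C$ and $\|\nabla u\|_{L^\infty}\leq C$ immediately give $e^u+|\nabla u|\leq C$; they do not directly control $e^{-u}$, but a factor of $e^{-2u}$ is always paired with $\partial_p f\otimes\partial_q f$ (whose modulus is $e^{2u}$), so the net contribution of the conformal factor is bounded. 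Hence $|\mathcal{C}_j| \leq C(|A|+|\nabla A|)$ pointwise.

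Finally, the $\varepsilon$-regularity of Willmore immersions (as recalled in the second subsection of the Appendix) guarantees that whenever $\int_{[a,b]\times S^1}|A|^2\,dV_g\leq \varepsilon$ with $\varepsilon$ small, one has on each interior subdomain
\[
	\|A\|_{L^\infty([a+\delta,b-\delta]\times S^1)} + \|\nabla A\|_{L^\infty([a+\delta,b-\delta]\times S^1)} \leq C(\delta)\,\|A\|_{L^2([a,b]\times S^1)}.
\]
Setting $\alpha := C\sum_j|\mathcal C_j|$ and applying this $L^\infty$ estimate term-by-term yields
\[
	|\Delta H|\leq \alpha\,(|H|+|\nabla H|), \qquad \|\alpha\|_{L^\infty([a+\delta,b-\delta]\times S^1)}\leq C(\delta)\,\|A\|_{L^2},
\]
which is the desired conclusion. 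The only genuinely delicate point is bookkeeping: verifying that, after using normality of $H$ to cancel tangential pieces, no purely geometric term survives without an $A$ or $\nabla A$ factor — this is what prevents an uncontrolled $|H|$-term in $\alpha$ and makes $\alpha$ small together with $\|A\|_{L^2}$.
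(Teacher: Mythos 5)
Your proposal is correct and follows essentially the same route as the paper: expand the divergence form of the Willmore equation, observe that every resulting coefficient of $H$ or $\nabla H$ carries a factor of $A$, $\nabla A$ or $H$ itself, and bound these coefficients in $L^\infty$ on the interior by $\|A\|_{L^2}$ via the $\epsilon$-regularity together with the hypotheses on $u$ and $\nabla u$. The only (cosmetic) difference is in handling the second derivatives of $f$: the paper substitutes $\Delta f=He^{2u}$ and bounds $f_{pq}$ through the decomposition $f_{pq}=A_{pq}+\langle f_{pq},f_l\rangle e^{-2u}f_l$ obtained by differentiating the conformality relations, whereas you invoke the orthogonality $H\cdot\partial_kf=0$ to discard tangential pieces.
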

\begin{proof}
Recall that the Willmore equation in conformal coordinates is (see Section 2 of \cite{Li-Yin})
\begin{align*}
\Delta H=-2\partial_q(H\cdot A_{pq}  e^{-2u}f_p)+\frac{1}{2}\partial_q(|H|^2f_q).
\end{align*}
The right hand side of the above equation is 
\begin{align*}
rhs:=-2\partial_q(H\cdot A_{pq} e^{-2u})f_p-2  H\cdot A_{pq}e^{-2u}f_{pq}+H\cdot H_qf_q+\frac{1}{2}|H|^2H e^{2u},
\end{align*}
where we have used $\Delta f=H e^{2u}$.

For $\epsilon$ small, we can use the $\epsilon$-regularity to bound
\[
\sup_{[a+\delta,b-\delta]\times S^1} \abs{H}+ \abs{A_{pq}} + \abs{\partial H} + \abs{\partial A_{pq}} \leq C(\delta) \norm{A}_{L^2([a,b]\times S^1)}.
\]
Due to the upper bound of $u$, the first derivatives of $f$ are bounded. For the second order derivatives of $f$, we note that
\[
f_{pq}=A_{pq}+\langle f_{pq},f_l\rangle e^{-2u}f_l.
\]
Taking derivative of the equations $\langle f_1, f_1\rangle =e^{2u}$, we can bound
\[
\langle f_{11},f_1 \rangle,\quad \langle f_{12}, f_1 \rangle.
\]
Moreover, due to $\langle f_1, f_2 \rangle =0$, we have $\langle f_{11},f_2\rangle = - \langle f_1, f_{12}\rangle$. Switching $1$ and $2$ in the subscript, we are able to bound all $\langle f_{pq},f_l\rangle$, hence $f_{pq}$.
The lemma follows easily from the explicit expression of $rhs$ and the above bounds on $H$, $A$, $u$, $f$ and their derivatives.
\end{proof}

By direct calculation, we get the following transformation law on the residues. 
\begin{lem} \label{transformation} For any $v,c\in \mathbb{R}^n$ and $R\in SO(n), S\in \so$,  we have 
 \begin{align}\label{tau1 trans}
\tau_1(f+v,c)=\tau_1(f,c), \s \tau_1(\lambda f, c)=\tau_1(f,\lambda^{-1}c), \s\tau_1(Rf, c)=\tau_1(f,R^{-1}c),
\end{align}
\begin{align}\label{tau2trans}
 \tau_2(f+v,S)=\tau_2(f,S)+\tau_1(f,Sv), \s \tau_2(\lambda f, S)=\tau_2(f,S), \s \tau_2(Rf,S)=\tau_2(f,Ad_R(S)),
 \end{align}
 where $Ad_R(S)=R^{-1}SR$ is a adjoint action of $SO(n)$ on $\so$. 
\end{lem}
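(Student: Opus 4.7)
The proof is a direct verification from the explicit formulas for $\tau_1$ and $\tau_2$ given in the introduction, using the way the induced metric, the second fundamental form and the mean curvature transform under each of the three operations. I will organize the proof by the three types of transformations and, for each one, record first how the geometric data transform, then plug them into the definitions.

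\textbf{Step 1 (translation $f\mapsto f+v$).} Since a translation is an ambient isometry and does not involve $f$ itself in the formulas for $g$, $A$, $H$ and their derivatives, every quantity appearing in the definition of $\tau_1(f,c)$ is unchanged, which gives $\tau_1(f+v,c)=\tau_1(f,c)$. For $\tau_2$ the only places where $f$ (not $\partial f$) enters are the three terms with $Sf$. Writing $S(f+v)=Sf+Sv$ and using $\partial_t(Sv)=0$, the extra contribution is exactly
\[
-2\int_{\{t\}\times S^1}\partial_tH\cdot Sv\,d\theta-4\int_{\{t\}\times S^1}(H\cdot A_{ti})g^{ij}(\partial_jf\cdot Sv)\,d\theta+\int_{\{t\}\times S^1}|H|^2\,\partial_tf\cdot Sv\,d\theta,
\]
which is exactly $\tau_1(f,Sv)$.

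\textbf{Step 2 (scaling $f\mapsto\lambda f$).} Under this scaling one has $\partial_if\mapsto\lambda\partial_if$, $g_{ij}\mapsto\lambda^2 g_{ij}$, $g^{ij}\mapsto\lambda^{-2}g^{ij}$, $A_{ij}\mapsto\lambda A_{ij}$ and $H=g^{ij}A_{ij}\mapsto\lambda^{-1}H$. Substituting into the three integrands defining the bracket in $\tau_1(\lambda f,c)$ one sees that each integrand scales by exactly $\lambda^{-1}$, giving $\tau_1(\lambda f,c)=\lambda^{-1}\tau_1(f,c)\cdot\lambda=\tau_1(f,\lambda^{-1}c)$. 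For $\tau_2$, note that $S(\lambda f)=\lambda Sf$, so each of the three integrands in $\tau_2$ picks up one extra factor of $\lambda$ from $Sf$ (or $\partial_t(Sf)$) that exactly cancels the $\lambda^{-1}$ from Step 1, giving $\tau_2(\lambda f,S)=\tau_2(f,S)$.

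\textbf{Step 3 (rotation $f\mapsto Rf$).} Now $\partial_if\mapsto R\partial_if$, the induced metric $g$ is unchanged (so $g^{ij}$ is unchanged), and the normal-valued tensors transform by $A_{ij}\mapsto RA_{ij}$ and $H\mapsto RH$. Using $R^T=R^{-1}$ we get $RH\cdot RA_{ti}=H\cdot A_{ti}$ and $|RH|^2=|H|^2$, so the bracket in $\tau_1(Rf,c)$ equals $R$ applied to the bracket of $\tau_1(f,\cdot)$, and taking inner product with $c$ gives $\tau_1(Rf,c)=\tau_1(f,R^{-1}c)$. For $\tau_2$, the key identity is
\[
RH\cdot SRf=H\cdot R^TSRf=H\cdot\mathrm{Ad}_R(S)f,
\]
and similarly $R\partial_tH\cdot SRf=\partial_tH\cdot\mathrm{Ad}_R(S)f$, $RH\cdot\partial_t(SRf)=H\cdot\partial_t(\mathrm{Ad}_R(S)f)$, while $g^{ij}(R\partial_jf\cdot SRf)=g^{ij}(\partial_jf\cdot\mathrm{Ad}_R(S)f)$ and $|H|^2R\partial_tf\cdot SRf=|H|^2\partial_tf\cdot\mathrm{Ad}_R(S)f$. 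Combining, $\tau_2(Rf,S)=\tau_2(f,\mathrm{Ad}_R(S))$.

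There is no real obstacle here; the only thing to be careful about is bookkeeping of $\lambda$-powers in Step 2 and keeping track of $R$ versus $R^T$ in Step 3. Once the transformation of $A$ and $H$ under scaling/rotation is written down, each identity reduces to a one-line substitution.
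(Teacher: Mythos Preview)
Your proposal is correct and is exactly the approach the paper intends: the paper states the lemma with the comment ``By direct calculation'' and gives no further proof, and your three-step verification (translation, scaling, rotation) carries out precisely this direct calculation. One small slip: in Step~2 you write $\tau_1(\lambda f,c)=\lambda^{-1}\tau_1(f,c)\cdot\lambda$, which as written equals $\tau_1(f,c)$; you mean $\tau_1(\lambda f,c)=\lambda^{-1}\tau_1(f,c)=\tau_1(f,\lambda^{-1}c)$, and the surrounding argument makes clear this is just a typo.
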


\section{Gauss maps and their tension fields }
In this appendix, we collect some formulas and properties on the Grassmannian $G(2,n)$, among which is the explicit expression for Ruh and Vilms' formula \cite{Ruh-Vilms} in an isothermal  coordinate system.

Let $\Lambda^p(\R^n)$ be the $p$-th wedge product of $\R^n$. With the standard inner product of $\R^n$, we can define one for $\Lambda^p(\R^n)$ by asking
\[
e_{i_1}\wedge \cdots \wedge e_{i_p}, \qquad i_1<\cdots<i_p
\]
to be an orthonormal basis of $\Lambda^p(\R^n)$ if $(e_i)$ is an orthonormal basis of $\R^n$. In particular, the inner product of $\Lambda^2(\R^n)$ is given by the formula
$$
\lan v_1\wedge w_1,v_2\wedge w_2\ran=\lan v_1,v_2\ran\lan w_1,w_2\ran-\lan v_1,w_2\ran\lan v_2,w_1\ran.
$$

Given $P\in G(2,n)$, an oriented plane in $\R^n$, Let $(e_1,e_2)$ be an orthonormal basis of $P$ that agrees with the orientation of $P$. The map
$$
P\rightarrow e_1\wedge e_2
$$
gives an isometric embedding from $G(2,n)$ into  $\Lambda^2(\R^n)\cong \R^{\frac{n(n-1)}{2}}$. Throughout the paper, we often use $e_1\wedge e_2$ to represent a point in $G(2,n)$. 

For $P=e_1\wedge e_2\in G(2,n)$, assume that $e_1,\cdots,e_n$ is a choice of orthonormal basis of $\R^n$. Then the tangent space $T_P G(2,n)$ is spanned by
$$
\{e_i\wedge e_\alpha |\, i=1,2,\s \alpha=3,\cdots,n\}.
$$
To see this, we know the dimension $\dim G(2,n)=2(n-2)$ on one hand, while on the other hand, $\gamma(t)=e_1\wedge(e_2\cos t+e_\alpha \sin t)$ is a curve in $G(2,n)$ with
$$
\gamma(0)=e_1\wedge e_2,\s \dot\gamma(0)=e_1\wedge e_\alpha,
$$
which implies that $e_1\wedge e_\alpha$ for any $\alpha=3,\cdots,n$ is in  the tangent space.
Using the formula of inner product, we derive that the normal space at $P$ is spanned by
$$
\{e_1\wedge e_2, e_\alpha\wedge e_\beta |\, \alpha,\beta = 3, \cdots, n\}.
$$

Now, we let $f:D\rightarrow\R^n$ be a conformal map with $g=e^{2u}g_{euc}$, and set $\Sigma=f(D)$. 
For each $x\in D$,  let $e_1=e^{-u}f_1$, $e_2=e^{-u}f_2$ and choose $\set{e_3,\cdots,e_n}$ such that $\{e_1,e_2,\cdots,e_n\}$ is an orthonormal basis of $\R^n$ that depends on $x$ smoothly. In what follows, we use the Latin letters for $1$ or $2$ and Greek letters for $3,\cdots,n$.
The Gauss map $\n$ of $f$ and the second fundamental form are given by
$$
\n=e_1\wedge e_2,\s A_{ij}=(f_{ij})^\bot=\lan f_{ij},e_\alpha \ran e_\alpha=-\lan f_i,\frac{\partial e_\alpha}{\partial x^j}\ran e_\alpha.
$$
Moreover, we have
$$
f_{ij}= e^{-2u}\lan f_{ij},f_k\ran f_k+A_{ij},\s \frac{\partial e_\alpha}{\partial x^j}=B_{j\alpha}^\beta e_\beta - e^{-2u}A_{jk}^\alpha f_k,
$$
where $B_{i\alpha}^\beta$ are the coefficients for the induced connection in the normal bundle. Then
\begin{eqnarray}\label{der.G}
\frac{\partial \n}{\partial x^i}&=&\frac{\partial e_1}{\partial x^i}\wedge e_2+e_1\wedge\frac{\partial e_2}{\partial x^i}\\\nonumber
&=&\lan \frac{\partial e_1}{\partial x^i},e_\alpha \ran e_\alpha \wedge e_2+e_1\wedge\lan\frac{\partial e_2}{\partial x^i},e_\alpha \ran e_\alpha \\\nonumber
&=&A(f_i,e_1)\wedge e_2+e_1\wedge A(f_i,e_2)\\\nonumber
&=&e^{-2u}(A_{i1}\wedge f_2+f_1\wedge A_{i2}).\nonumber
\end{eqnarray}

Let $J$ is the complex structure on $\Sigma$ (induced from that on $D$), i.e.
$$
J(f_1)=f_2,\s J(f_2)=-f_1.
$$
It follows from \eqref{der.G} that
$$
\frac{\partial \n}{\partial x^i}=A_{ip}\wedge J(f_q)g^{pg}
$$
Next, we take a normal frame of $T\Sigma$ at $x$, denoted by $(\tilde e_1, \tilde e_2)$. Namely, they are orthonormal at $T_x \Sigma$ and $\nabla_{\tilde e_i} \tilde e_j (x)=0$. We write $\tilde A_{ij}$ for $A(\tilde e_i,\tilde e_j)$. Then the above equation becomes
\[
\tilde e_i (\n) = \tilde A_{ip} \wedge J(\tilde e_p).
\]
Using $\nabla_{\tilde e_i} \tilde e_j (x)=0$, we have, at $x$,
\[
\Delta_g \n (x) = \tilde e_i ( \tilde A_{ip} \wedge J(\tilde e_p)).
\]
Here we have taken $\tilde A_{ip}$, $J(\tilde e_p)$ as $\R^n$-valued functions defined on $\Sigma$ and $\tilde e_i$ acts on them by taking directional derivatives. We use $\pi_1$ and $\pi_2$ to denote the projections to the tangent and the normal directions of the surface (at a fixed point), then for any vector $V\in \R^n$, $V=\pi_1(V)+\pi_2(V)$. For our purpose, we are interested in the projection of $\Delta_g \n(x)$ onto the tangent space of $G(2,n)$ at $\n(x)$. Keeping in mind that for any $V,W\in \R^n$, $\pi_2(V)\wedge \pi_2(W)$ and $\pi_1(V)\wedge \pi_1(W)$ are in the normal space, we have
\begin{eqnarray*}
\text{the tangent projection of}\, \Delta_g \n (x) &=& \nabla_{\tilde e_i}^\perp \tilde A_{ip} \wedge J(\tilde e_p) + \tilde A_{ip} \wedge \nabla_{\tilde e_i}(J\tilde e_p).
\end{eqnarray*}
Here $\nabla^\perp$ and $\nabla$ are the induced connections of the normal and the tangent bundle of the surface $\Sigma$ respectively. 
Using the Codazzi-Mainardi equation $\nabla^\perp_{\tilde e_i} \tilde A_{ip}(x) = \nabla^\perp_{\tilde e_p} H (x)$, the fact that $J$ is parallel and $\nabla_{\tilde e_i} \tilde{e_p}(x)=0$, we derive the formula for the tension field of $\n$
\[
\tau(\n)(x) = \text{the tangent projection of}\s \Delta_g \n (x) = \nabla^\perp_{\tilde e_p} H \wedge J(\tilde e_p).
\]
Or equivalently, using the second fundamental form of $G(2,n)$ in $\Lambda^2(\R^n)$,
\begin{equation}\label{tension.G}
\tau(\n)=\Delta \n-A_{G(2,n)}(d\n,d\n)=e^{-2u}\left( \nabla^\bot_1H\wedge f_2-\nabla^\bot_2H\wedge f_1\right).
\end{equation}

\bibliographystyle{alpha}
\bibliography{ref}

\begin{thebibliography}{CLW12}

\bibitem[Ber16]{B}
Yann Bernard.
\newblock Noether's theorem and the {W}illmore functional.
\newblock {\em Adv. Calc. Var.}, 9(3):217--234, 2016.

\bibitem[BR13]{B-R0}
Yann Bernard and Tristan Rivi\`ere.
\newblock Singularity removability at branch points for {W}illmore surfaces.
\newblock {\em Pacific J. Math.}, 265(2):257--311, 2013.

\bibitem[BR14]{B-R}
Yann Bernard and Tristan Rivi\`ere.
\newblock Energy quantization for {W}illmore surfaces and applications.
\newblock {\em Ann. of Math. (2)}, 180(1):87--136, 2014.

\bibitem[CL14]{Chen-Li}
Jingyi Chen and Yuxiang Li.
\newblock Bubble tree of branched conformal immersions and applications to the
  {W}illmore functional.
\newblock {\em Amer. J. Math.}, 136(4):1107--1154, 2014.

\bibitem[CLW12]{Chen-Li-Wang}
Li~Chen, Yuxiang Li, and Youde Wang.
\newblock The refined analysis on the convergence behavior of harmonic map
  sequence from cylinders.
\newblock {\em J. Geom. Anal.}, 22(4):942--963, 2012.

\bibitem[CT99]{C-T}
Jingyi Chen and Gang Tian.
\newblock Compactification of moduli space of harmonic mappings.
\newblock {\em Comment. Math. Helv.}, 74(2):201--237, 1999.

\bibitem[KS04]{K-S1}
Ernst Kuwert and Reiner Sch\"atzle.
\newblock Removability of point singularities of {W}illmore surfaces.
\newblock {\em Ann. of Math. (2)}, 160(1):315--357, 2004.

\bibitem[KS07]{K-S2}
Ernst Kuwert and Reiner Sch\"atzle.
\newblock Branch points of {W}illmore surfaces.
\newblock {\em Duke Math. J.}, 138(2):179--201, 2007.

\bibitem[LR18a]{L-R}
Paul Laurain and Tristan Rivi\`ere.
\newblock Energy quantization of {W}illmore surfaces at the boundary of the
  moduli space.
\newblock {\em Duke Math. J.}, 167(11):2073--2124, 2018.

\bibitem[LR18b]{L-R2}
Paul Laurain and Tristan Rivi\`ere.
\newblock Optimal estimate for the gradient of {G}reen's function on
  degenerating surfaces and applications.
\newblock {\em Comm. Anal. Geom.}, 26(4):887--913, 2018.

\bibitem[LWZ20]{Li-Wei-Zhou}
Yuxiang Li, Guodong Wei, and Zhipeng Zhou.
\newblock John-{N}irenberg radius and collapsing in conformal geometry.
\newblock {\em Asian J. Math.}, 24(5):759--782, 2020.

\bibitem[LY24]{Li-Yin}
Yuxiang Li and Hao Yin.
\newblock {3-circle Theorem for Willmore surface I}.
\newblock {\em arXiv.2406.02828}, 2024.

\bibitem[Mar23]{M}
Dorian Martino.
\newblock Energy quantization for willmore surfaces with bounded index.
\newblock {\em arXiv.2305.08668}, 2023.

\bibitem[Riv08]{R}
Tristan Rivi\`ere.
\newblock Analysis aspects of {W}illmore surfaces.
\newblock {\em Invent. Math.}, 174(1):1--45, 2008.

\bibitem[RV70]{Ruh-Vilms}
Ernst~A. Ruh and Jaak Vilms.
\newblock The tension field of the {G}auss map.
\newblock {\em Trans. Amer. Math. Soc.}, 149:569--573, 1970.

\bibitem[Sim93]{Simon}
Leon Simon.
\newblock Existence of surfaces minimizing the {W}illmore functional.
\newblock {\em Comm. Anal. Geom.}, 1(2):281--326, 1993.

\bibitem[Zhu10]{Zhu}
Miaomiao Zhu.
\newblock Harmonic maps from degenerating {R}iemann surfaces.
\newblock {\em Math. Z.}, 264(1):63--85, 2010.

\end{thebibliography}

\end{document}